\theoremstyle{definition}
\newtheorem{theorem}{Theorem}[section]
\newtheorem{lemma}[theorem]{Lemma}
\newtheorem{corollary}[theorem]{Corollary}
\newtheorem{proposition}[theorem]{Proposition}
\theoremstyle{definition}
\newtheorem{definition}[theorem]{Definition}
\theoremstyle{remark}
\newtheorem{remark}[theorem]{Remark}
\numberwithin{equation}{section}
\numberwithin{equation}{section}
\newcommand{\R}{\mathbf{R}}  
\begin{document}

\title{The Bubble skein element and applications}

\author{Mustafa Hajij}
\address{Department of Mathematics, Louisiana State University, 
Baton Rouge, LA 70803 USA}
\email{mhajij1@math.lsu.edu}





\begin{abstract}
We study a certain skein element in the relative Kauffman bracket skein module of the disk with some marked points, and expand this element in terms linearly independent elements of this module. This expansion is used to compute and study the head and the tail of the colored Jones polynomial and in particular we give a simple $q-$series for the tail of the knot $8_5$. Furthermore, we use this expansion to obtain an easy determination of the theta coefficients.  
\end{abstract}

\keywords{Kauffman bracket skein modules, the colored Jones polynomial, trivalent graphs, $q$-series. } 
\subjclass[2010]{Primary : 57M27, 57M25; Secondary : 57M15, 11P84.}

 \maketitle

\section{Introduction}

In \cite{Cody2} Armond and Dasbach introduced the head and the tail of the colored Jones polynomial of an alternating knot $L$, two link invariants which take the form of infinite power series with integer coefficients. Skein theoretic techniques have been used in \cite{Cody} and \cite{Cody2} to understand the head and tail of an alternating link. Write $\mathcal{S}(S^{3})$ to denote the Kauffman Bracket Skein Module of $S^{3}$. Let $L$ be an alternating link and let $D$ be a reduced link diagram of $L$. Write $S_B(D)$ to denote the all-$B$ smoothing state of $D$, the state obtained by replacing each crossing by a $B$ smoothing. Let $S_B^{(n)}(D)$ be the skein element obtained from $S_B(D)$ by decorating each circle in this state with the $n^{th}$ Jones-Wenzl idempotent and replacing each place where we had a crossing in $D$ with the $(2n)^{th}$ projector. It was proven in \cite{Cody} that for an adequate link $L$ the first $4(n+1)$ coefficients of $n^{th}$ unreduced colored Jones polynomial coincide with the first $4(n+1)$ coefficients of the skein element $S_B^{(n)}(D)$. Our work here initially aimed to understand $S_B^{(n)}(D)$ for an alternating link diagram $D$. Initial examinations of various examples of $S_B^{(n)}(D)$ showed that a certain skein element in the relative Kauffman bracket skein module of the disk with some marked points mostly shows up as sub-skein element of $S_B^{(n)}(D)$. We will call this element the bubble skein element. We eventually found various applications for the bubble element. In fact, in \cite{Hajij} we use the bubble element to compute $q$-series associated with the colored Jones polynomial of certain knots that were unknown, and also to prove two sets of $q$-series identities: Andrews-Gordon identities involving the Ramanujan theta function and a similar set of identities involving the Ramanujan false theta function.

\subsection{Plan of the paper} In section \ref{section2}, we review the skein theory basics needed in this paper. In section \ref{mainresults}, we state our main results. In section \ref{section3}, we give a recursive formula for the bubble skein element. In section \ref{section4}, we use the recursive identity for the bubble skein element to expand this element in terms of a linearly independent set in the relative module of the disk. In section \ref{section5}, we use our results to give an easy way to determine a theta graph spin network evaluation in $\mathcal{S}(S^{3})$. Furthermore, we study the element $S_B^{(n)}$ for the knot $8_5$ and we use our techniques to compute for the tail of this knot.
\subsection{Acknowledgements}
I would like to thank Oliver Dasbach whose guidance made this possible. I would also like to thank Pat Gilmer for teaching me skein theory and for his helpful discussions, Cody Armond for many valuable conversations, Khaled Bataineh and Kyle Istvan for reading early drafts carefully and pointing out many corrections.
\section{Skein theory}
\label{section2}
In this section we review the fundamentals of the Kauffman Bracket Skein Modules and introduce the skein modules that will be used for our purpose. Furthermore, we discuss the recursive definition of Jones-Wenzl idempotent and recall some of its basic properties. For more details about linear skein theory associated with the Kauffman Bracket, see \cite{kaufflinks}, \cite{Lickorish1}, and \cite{Przytycki}.
\begin{definition}(J. Przytycki \cite{Przytycki} and V. Turaev \cite{tur})
Let $M$ be an oriented $3$-manifold. Let $\R$ be a commutative ring with identity and a fixed invertable element $A$. Let $\mathcal{L}_M$ be the set of equivalence classes of isotopy classes of framed links in $M$ including the empty link. Let $\R\mathcal{L}_M$ be the free $\R$-module generated by the set $\mathcal{L}_M$. Let $K(M)$ be the smallest submodule of $\R\mathcal{L}_M$ that is generated by all expressions of the form
\begin{eqnarray*}(1)\hspace{3 mm}
  \begin{minipage}[h]{0.06\linewidth}
        \vspace{0pt}
        \scalebox{0.04}{\includegraphics{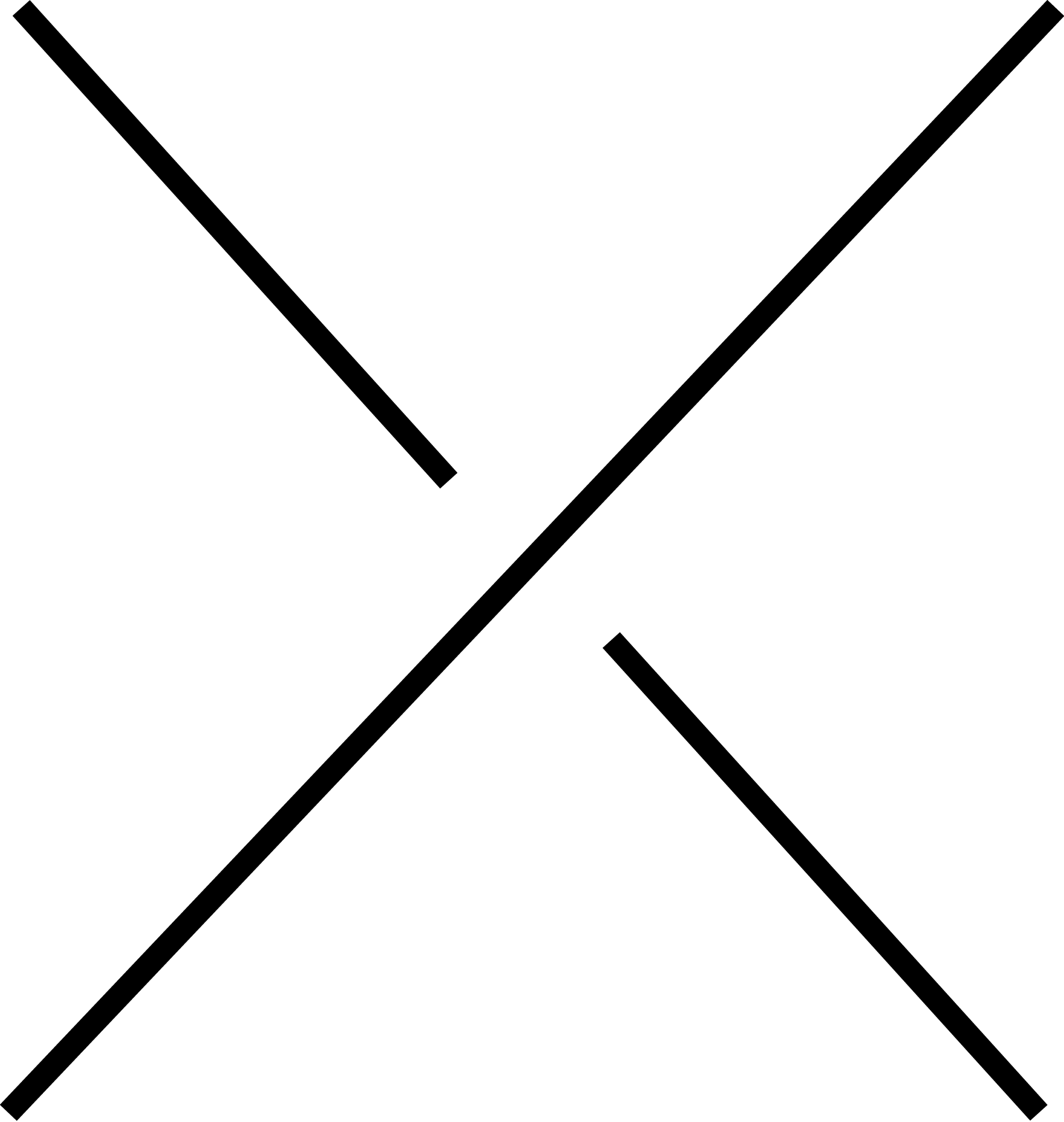}}
   \end{minipage}
   -
     A 
  \begin{minipage}[h]{0.06\linewidth}
        \vspace{0pt}
        \scalebox{0.04}{\includegraphics{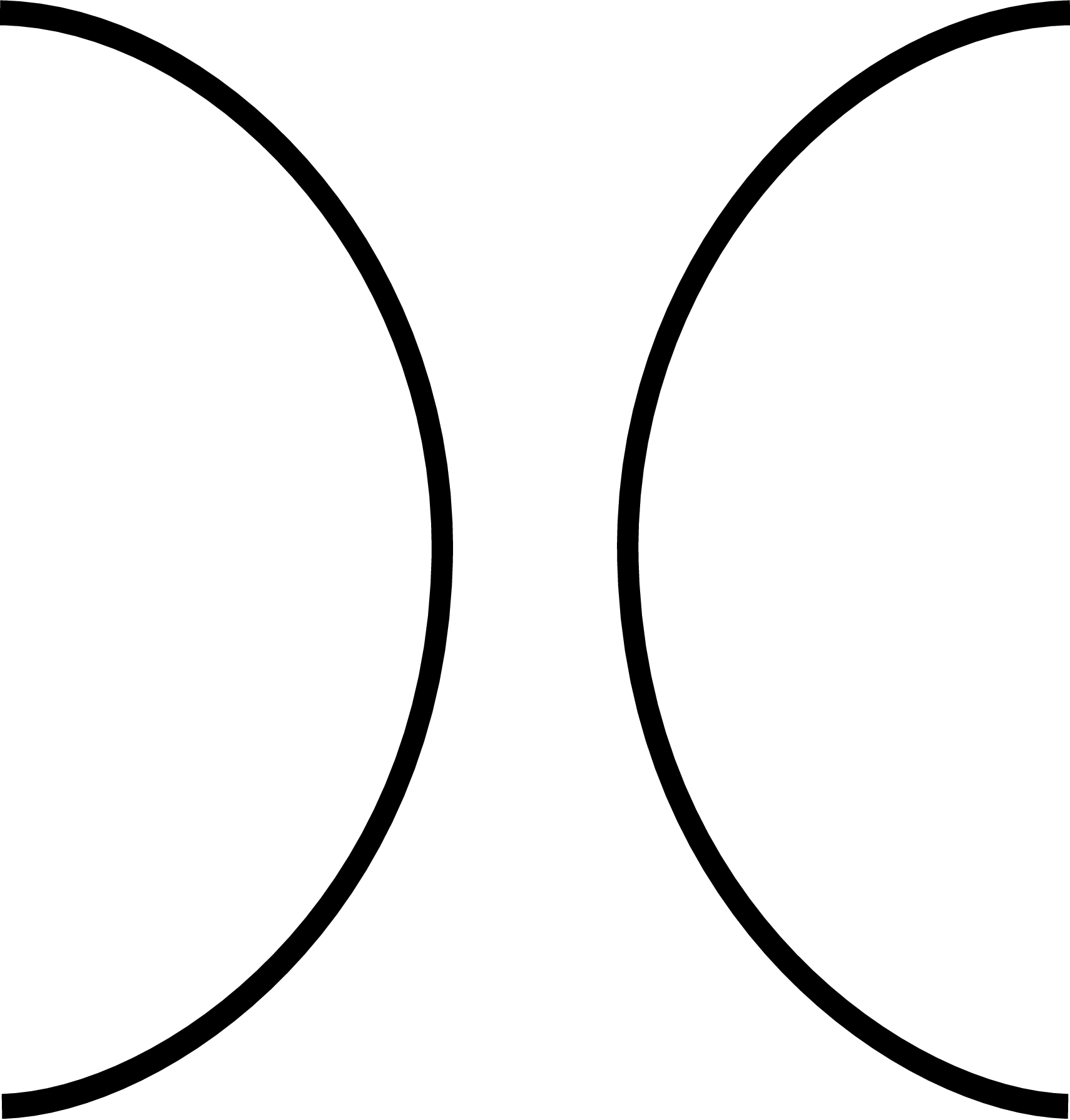}}
   \end{minipage}
   -
  A^{-1} 
  \begin{minipage}[h]{0.06\linewidth}
        \vspace{0pt}
        \scalebox{0.04}{\includegraphics{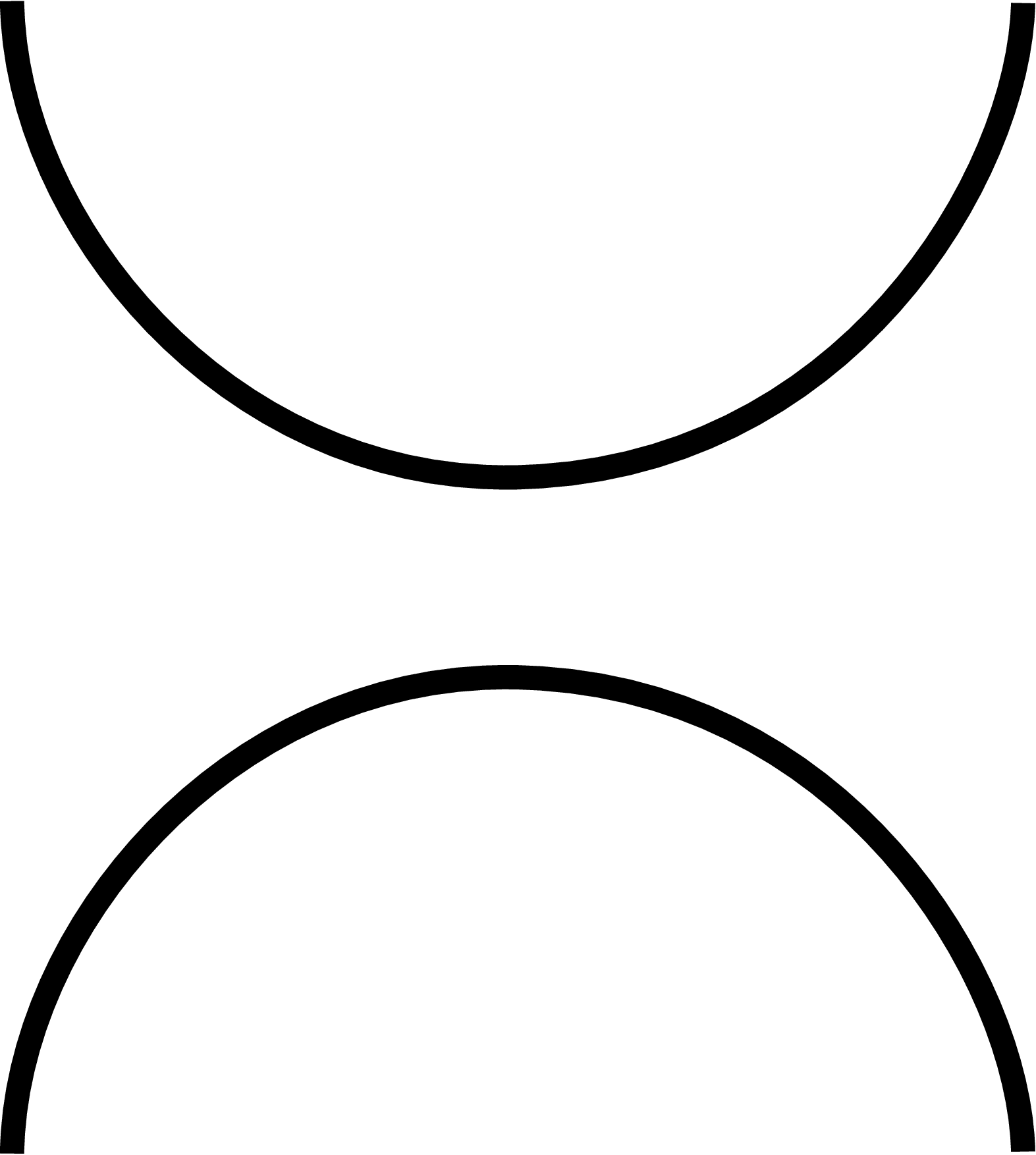}}
   \end{minipage}
, \hspace{20 mm}
  (2)\hspace{3 mm} L\sqcup
   \begin{minipage}[h]{0.05\linewidth}
        \vspace{0pt}
        \scalebox{0.02}{\includegraphics{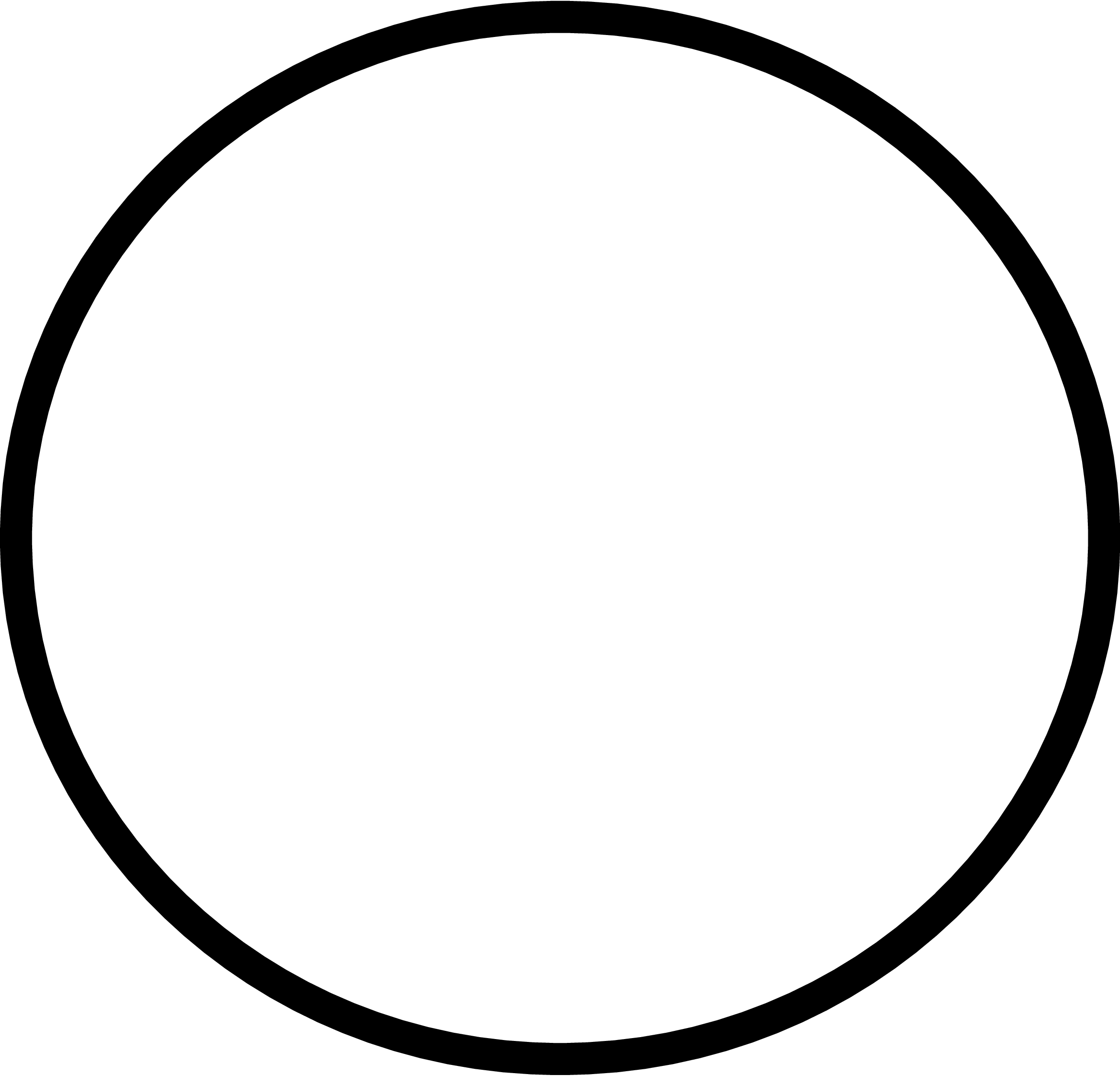}}
   \end{minipage}
  +
  (A^{2}+A^{-2})L. 
  \end{eqnarray*}
where $L\sqcup$ \begin{minipage}[h]{0.05\linewidth}
        \vspace{0pt}
        \scalebox{0.02}{\includegraphics{simple-circle}}
   \end{minipage}  consists of a framed link $L$ in $M$ and the trivial framed knot 
   \begin{minipage}[h]{0.05\linewidth}
        \vspace{0pt}
        \scalebox{0.02}{\includegraphics{simple-circle}}
   \end{minipage}.
The Kauffman bracket skein module, $\mathcal{S}(M;\R,A)$, is defined to be the quotient module
$\mathcal{S}(M;\R,A)=\R\mathcal{L}_M/K(M).$

\end{definition}  
A relative version of the Kauffman bracket skein module can be defined when $M$ has a boundary. The definition is extended as follows. We specify a finite (possibly empty) set of points $x_1,x_2,...,x_{2n}$ on the boundary of $M$. A band is a surface that is homeomorphic to $I\times I$. An element in the set $\mathcal{L}_M$ is an isotopy class of an oriented surface embedded into $M$ and decomposed into a union of finite number of framed links and bands joining the designated boundary points. Let $K(M)$ be the smallest submodule of $\R\mathcal{L}_M$ that is generated by Kauffman relations specified above. The Kauffman bracket skein module is defined to be the quotient module $\mathcal{S}(M;\R,A,\{x_i\}_{i=1}^{2n})=\R\mathcal{L}_M/K(M).$ The relative Kauffman bracket skein module depends only on the distribution of the points $\{x_i\}_{i=1}^{2n}$ among the different connected components of $\partial M$ and it does not depend on the exact position of the points $\{x_i\}_{i=1}^{2n}$. In particular, if $\partial M$ is connected then the definition of the relative Kauffman bracket skein module is independent of the choice of the exact position of the points $\{x_i\}_{i=1}^{2n}$. For more details see \cite{Przytycki} and \cite{Przytycki2}.
\\ \\ 
When the manifold $M$ is homeomorphic to $F\times [0,1]$, where $F$ is a surface with a finite set of points (possibly empty) in its boundary $\partial F$, one could define the (relative) Kauffman bracket skein module of $F$. In this case one considers an appropriate version of link diagrams in $F$ instead of framed links in $M$. The isomorphism between $\mathcal{S}(M;\R,A)$ and $\mathcal{S}(F;\R,A)$ that sends a framed link to its link diagram will be used to identify these two skein modules. \\
\\
We will work with the skein module of the sphere $\mathcal{S}(S^{2};\R,A)$. This skein module is freely generated a one generator. Let $D$ be any diagram in $S^2$. Using the definition of the normalized Kauffman bracket \cite{Kauffman2}, we can write $D = < D >\phi$ in $\mathcal{S}(S^{2};\R,A)$, where $\phi$ denotes the empty link. This provides an isomorphism $<>:\mathcal{S}(S^{2};\R,A) \longrightarrow \R$, induced by sending $D$ to $< D >$. In particular this isomorphism sends the empty link $\phi$ in $\mathcal{S}(S^{2};\R,A)$ to the identity in $\R$. We will also work with the relative skein module $\mathcal{S}([0,1]\times [0,1];\R,A,\{x_i\}_{i=1}^{2n})$, where the rectangular disk $[0,1]\times [0,1]$ has $n$ designated points $\{x_i\}_{i=1}^{n}$ on the top edge , where $x_i=(1,\frac{i}{n+1})$ for $1\leq i \leq n$,  and $n$ designated points on the bottom edge $\{x_i\}_{i=n+1}^{2n}$, where $x_i=(0,\frac{i-n}{n+1})$ for $n+1\leq i \leq 2n$. As we mentioned above, the relative skein module $\mathcal{S}([0,1]\times [0,1];\R,A,\{x_i\}_{i=1}^{2n})$ does not depend on the exact position of the points $\{x_i\}_{i=1}^{2n}$. However, we are making a choice for the position of the points here because we will define an algebra structure on $\mathcal{S}([0,1]\times [0,1];\R,A,\{x_i\}_{i=1}^{2n})$ where the position of these points is required. This relative skein module can be thought of as the $\R$-module generated by all $(n,n)$-tangle diagrams in $[0,1]\times [0,1]$ modulo the Kauffman relations. In fact the module $\mathcal{S}([0,1]\times [0,1];\R,A,\{x_i\}_{i=1}^{2n})$ is a free $\R$-module on $\frac{1}{n+1} {2n \choose n}$ free generators. For a proof of this fact see \cite{Przytycki}. The relative skein module $\mathcal{S}([0,1]\times [0,1];\R,A,\{x_i\}_{i=1}^{2n})$ admits a multiplication given by juxtaposition of two diagrams in $[0,1]\times [0,1]$. More precisely, let $D_1$ and $D_2$ be two diagrams in $[0,1] \times[0,1]$ such that $\partial D_j$, where $j=1,2$, consists of the points $\{x_i\}_{i=1}^{2n}$ specified above. Define $D_1.D_2$ to be the diagram in $[0,1]\times[0,1]$ obtained by attaching $D_1$ on the top of $D_2$ and then compress the result to $[0,1]\times[0,1]$. This multiplication on diagrams extends to a well-defined multiplication on isotopy classes of diagrams in $[0,1]\times[0,1]$. Finally it extends by linearity to a multiplication on $\mathcal{S}([0,1]\times [0,1];\R,A,\{x_i\}_{i=1}^{2n})$. With this multiplication $\mathcal{S}([0,1]\times [0,1];\R,A,\{x_i\}_{i=1}^{2n})$ is an associative algebra over $\R$ known as the \textit{$n^{th}$ Temperley-Lieb algebra} $TL_n$.
For each $n$ there exists an idempotent $f^{(n)}$ in $TL_n$ that plays a central role in the Witten-Reshetikhin-Turaev Invariants for $SU(2)$. See \cite{kaufflinks}, \cite{Lickorish3}, and \cite{Resh}. The idempotent $f^{(n)}$, known as the $n^{th}$ Jones-Wenzl idempotent, was discovered by Jones \cite{Jones} and it has a recursive formula due to Wenzl \cite{Wenzl}. We will use this recursive formula to define $f^{(n)}$. Further, we will adapt a graphical notation for $f^{(n)}$ which is due Lickorish \cite{Lickorish2}. In this graphical notation one thinks of $f^{(n)}$ as an empty box with $n$ strands entering and $n$ strands leaving the opposite side. \textit{The Jones-Wenzl idempotent} is defined by:
\begin{align}
\label{recursive}
  \begin{minipage}[h]{0.05\linewidth}
        \vspace{0pt}
        \scalebox{0.12}{\includegraphics{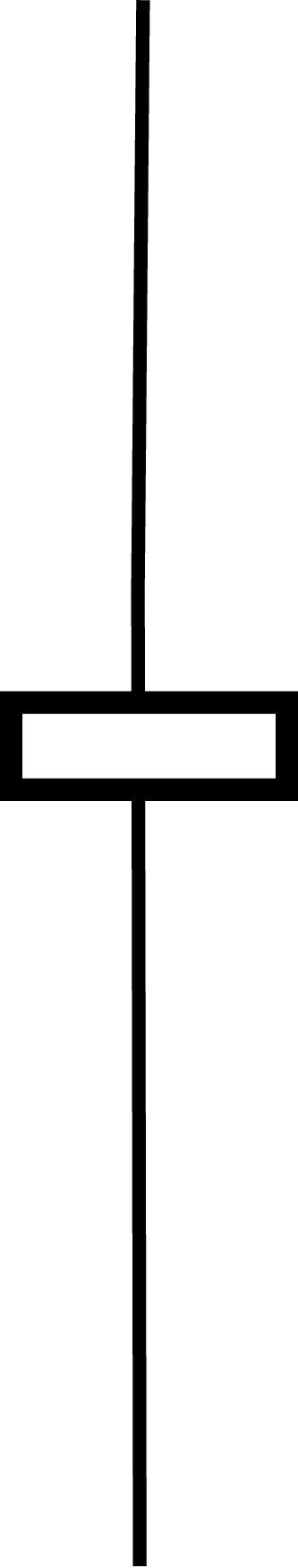}}
         \put(-20,+70){\footnotesize{$n$}}
   \end{minipage}
   =
  \begin{minipage}[h]{0.08\linewidth}
        \hspace{8pt}
        \scalebox{0.12}{\includegraphics{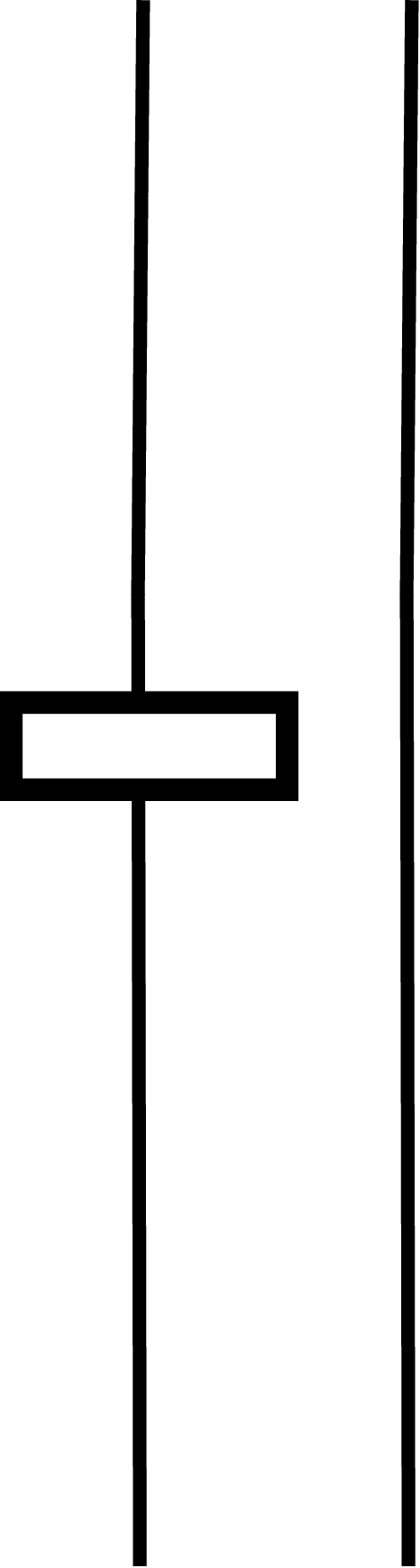}}
        \put(-42,+70){\footnotesize{$n-1$}}
        \put(-8,+70){\footnotesize{$1$}}
   \end{minipage}
   \hspace{9pt}
   -
 \Big( \frac{\Delta_{n-2}}{\Delta_{n-1}}\Big)
  \hspace{9pt}
  \begin{minipage}[h]{0.10\linewidth}
        \vspace{0pt}
        \scalebox{0.12}{\includegraphics{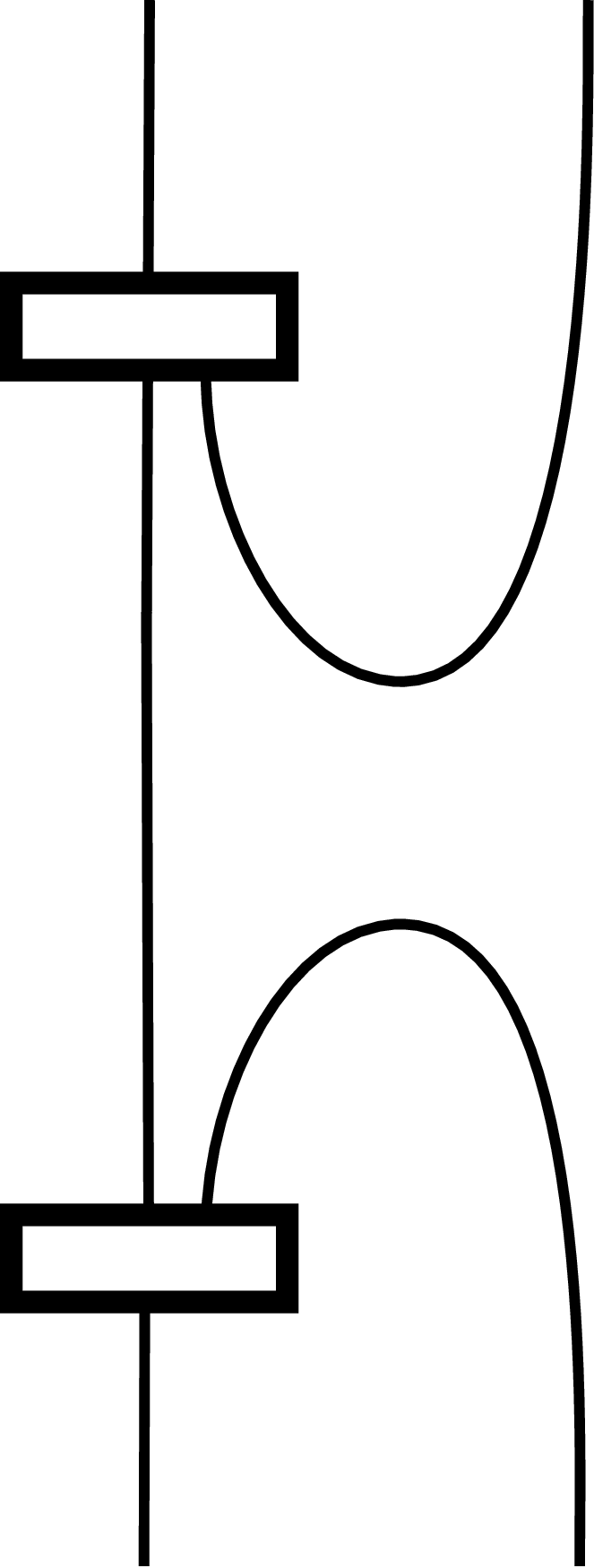}}
         \put(2,+85){\footnotesize{$1$}}
         \put(-52,+87){\footnotesize{$n-1$}}
         \put(-25,+47){\footnotesize{$n-2$}}
         \put(2,+10){\footnotesize{$1$}}
         \put(-52,+5){\footnotesize{$n-1$}}
   \end{minipage}
  , \hspace{20 mm}
    \begin{minipage}[h]{0.05\linewidth}
        \vspace{0pt}
        \scalebox{0.12}{\includegraphics{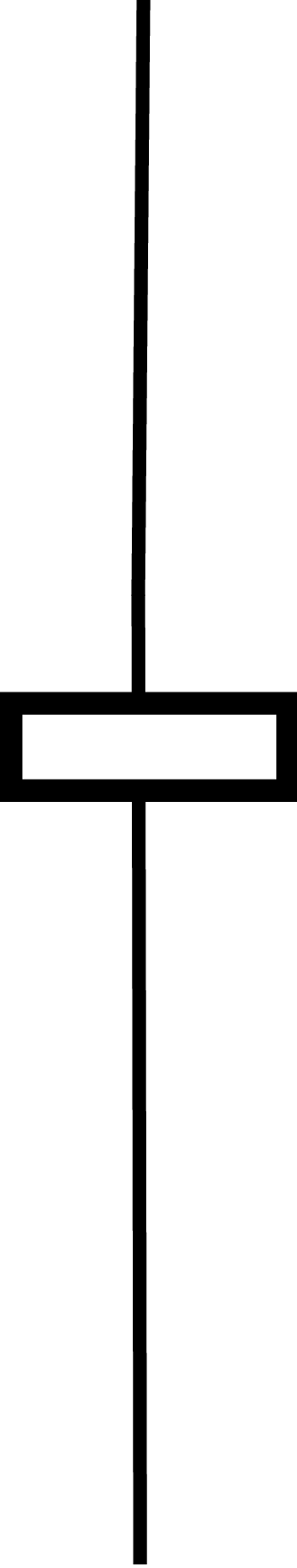}}
        \put(-20,+70){\footnotesize{$1$}}
   \end{minipage}
  =
  \begin{minipage}[h]{0.05\linewidth}
        \vspace{0pt}
        \scalebox{0.12}{\includegraphics{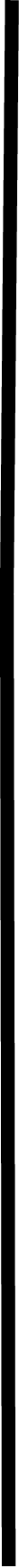}}
   \end{minipage}   
  \end{align}
  where 
\begin{equation*}
 \Delta_{n}=(-1)^{n}\frac{A^{2(n+1)}-A^{-2(n+1)}}{A^2-A^{-2}}.
\end{equation*}
The polynomial $\Delta_{n}$ is related to $[n+1]$, the $(n+1)^{th}$ quantum integer, by $\Delta _{n}=(-1)^{n}[n+1]$. It is common to use the substitution $a=A^2$ when one works with quantum integers.\\\\ 
We assume that $f^{(0)}$ is the empty diagram. The Jones-Wenzl idempotent satisfies
\begin{eqnarray}
\label{properties}
\hspace{0 mm}
\Delta_{n}=
  \begin{minipage}[h]{0.1\linewidth}
        \vspace{0pt}
        \scalebox{0.12}{\includegraphics{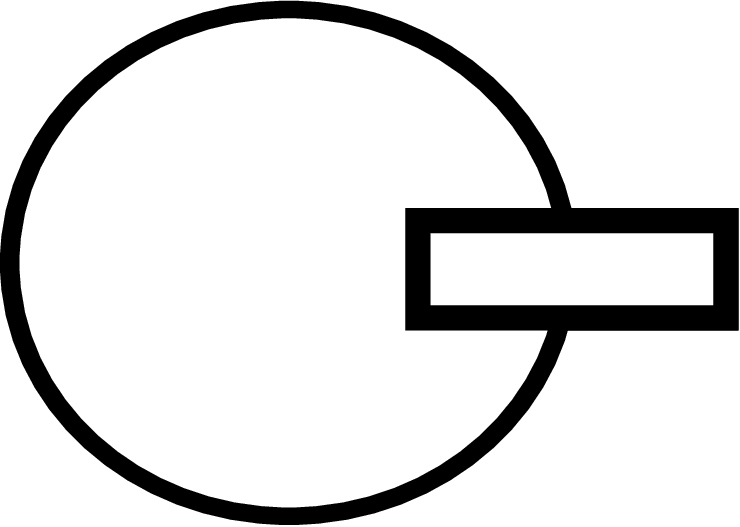}}
        \put(-29,+34){\footnotesize{$n$}}
   \end{minipage}
   , \hspace{14 mm}
     \begin{minipage}[h]{0.08\linewidth}
        \vspace{0pt}
        \scalebox{0.115}{\includegraphics{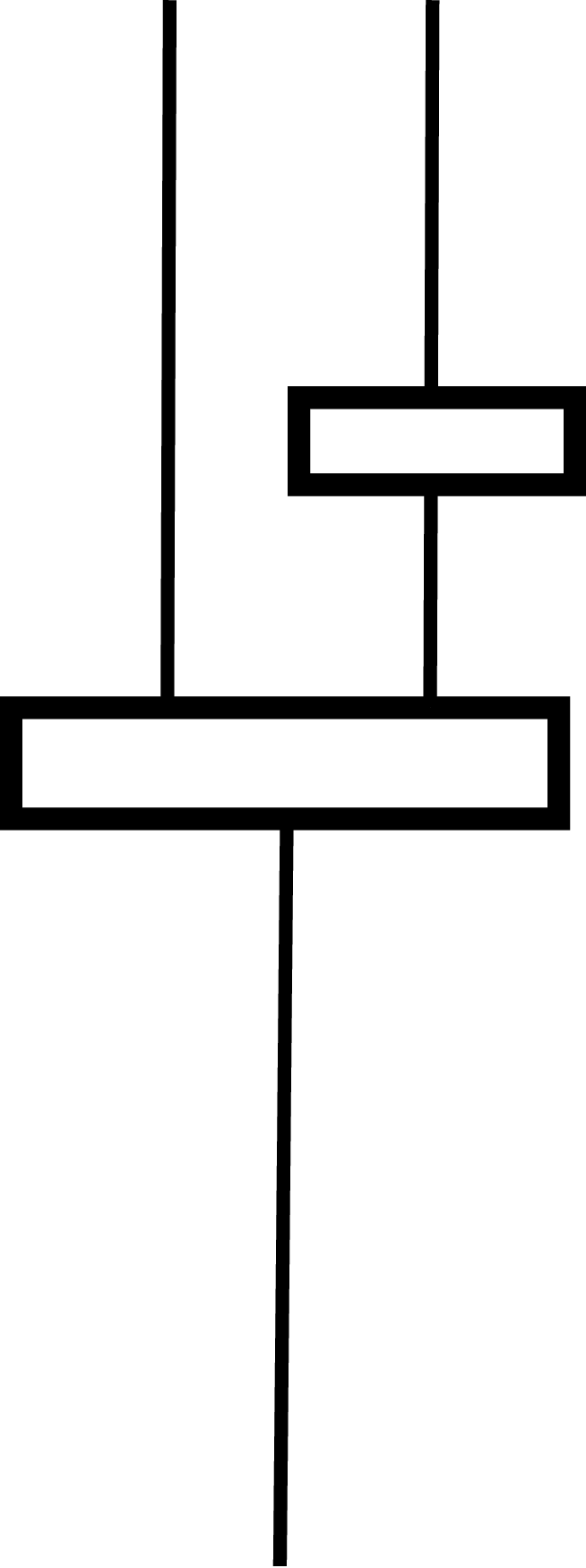}}
        \put(-34,+82){\footnotesize{$n$}}
        \put(-19,+82){\footnotesize{$m$}}
        \put(-46,20){\footnotesize{$m+n$}}
   \end{minipage}
  =
     \begin{minipage}[h]{0.09\linewidth}
        \vspace{0pt}
        \scalebox{0.115}{\includegraphics{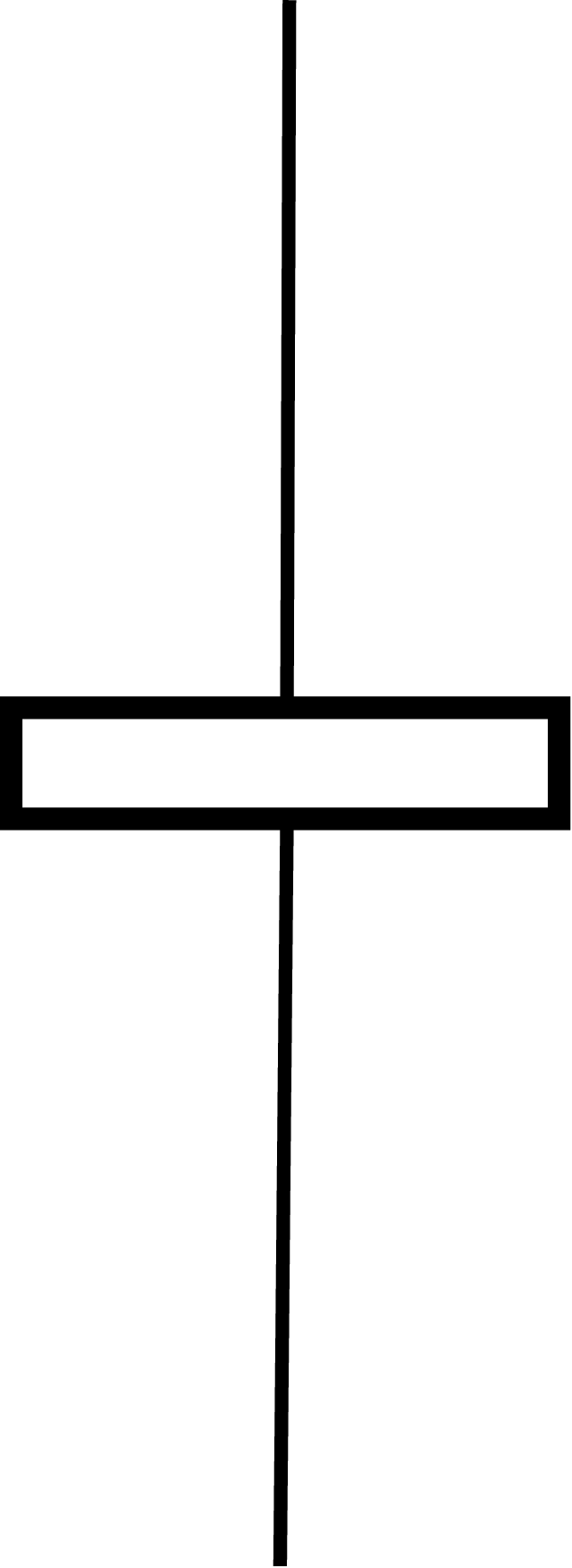}}
        \put(-46,20){\footnotesize{$m+n$}}
   \end{minipage}
    , \hspace{15 mm}
    \begin{minipage}[h]{0.09\linewidth}
        \vspace{0pt}
        \scalebox{0.115}{\includegraphics{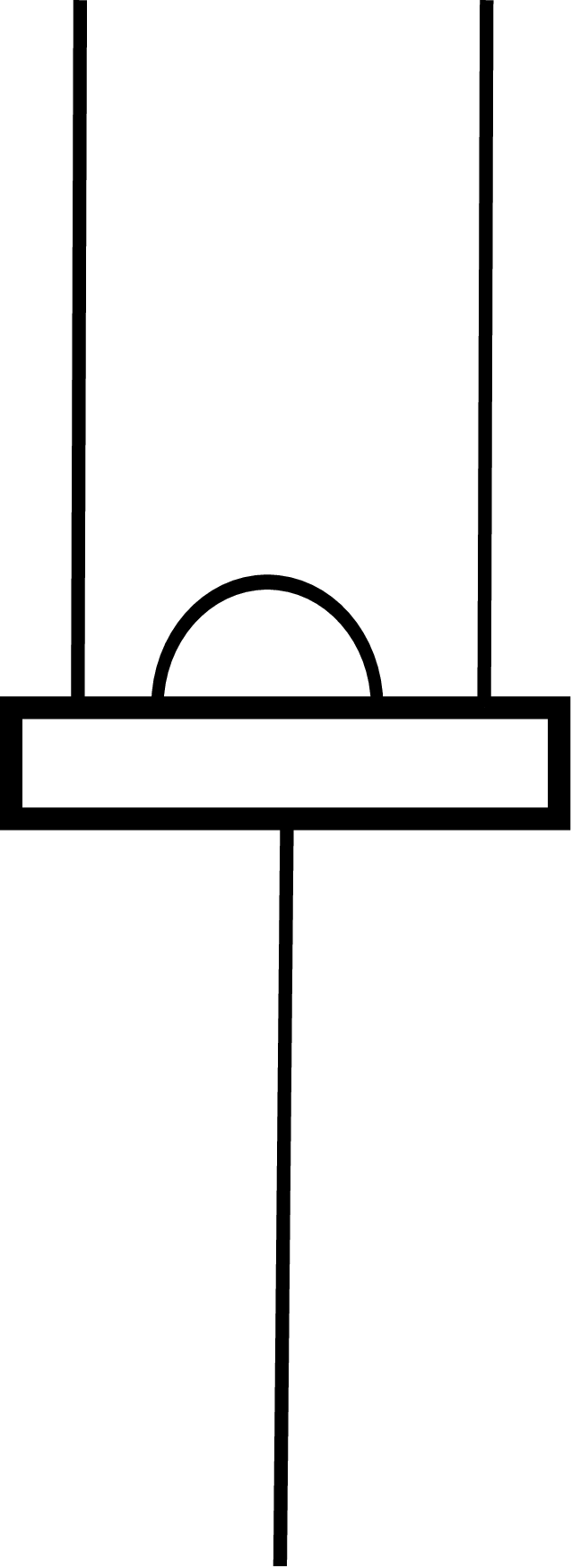}}
         \put(-40,+82){\footnotesize{$n$}}
         \put(-20,+64){\footnotesize{$1$}}
        \put(-2,+82){\footnotesize{$m$}}
        \put(-61,20){\footnotesize{$m+n+2$}}
   \end{minipage}
   =0,
  \end{eqnarray}
  and
  \begin{eqnarray}
\label{properties2}
    \begin{minipage}[h]{0.09\linewidth}
        \vspace{0pt}
        \scalebox{0.115}{\includegraphics{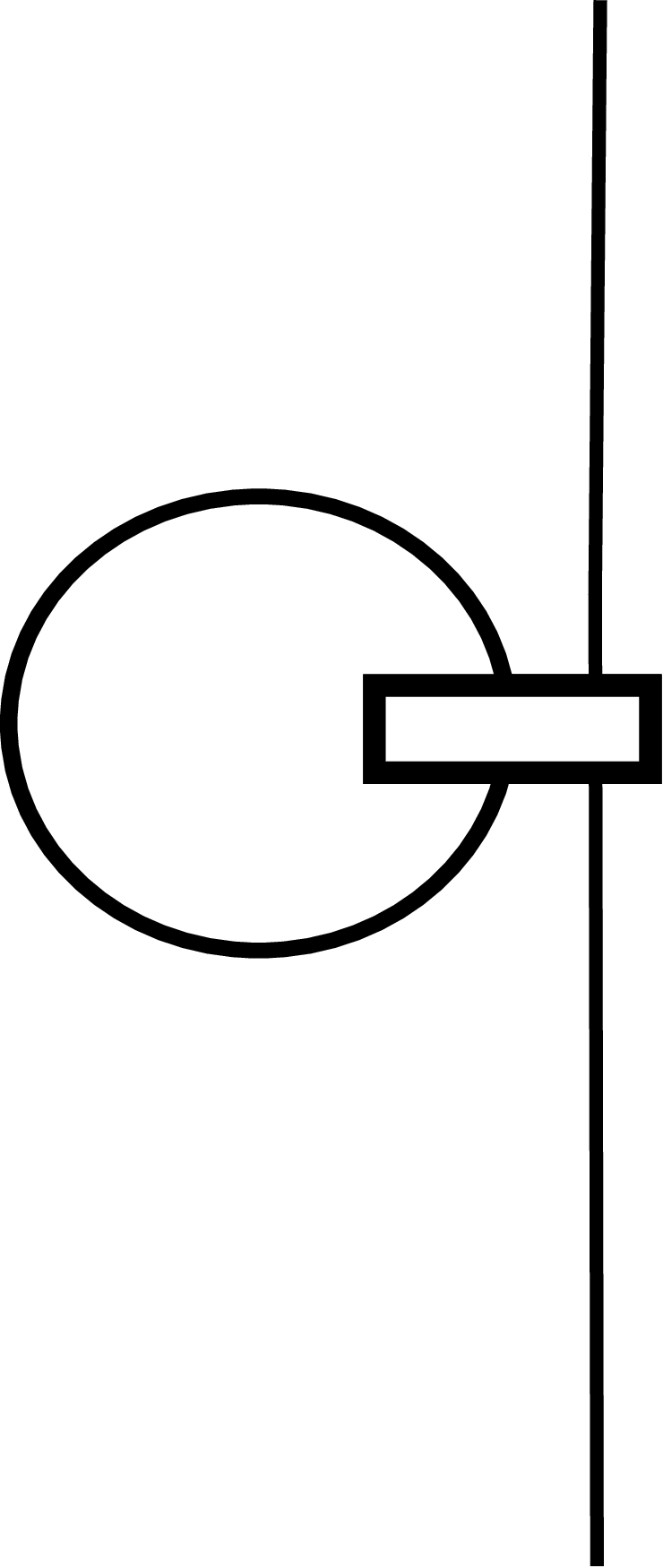}}
          \put(-13,+82){\footnotesize{$n$}}
        \put(-29,+70){\footnotesize{$m$}}
   \end{minipage}
   =\frac{\Delta_{m+n}}{\Delta_{n}}\hspace{1 mm}  
    \begin{minipage}[h]{0.06\linewidth}
        \vspace{0pt}
        \scalebox{0.115}{\includegraphics{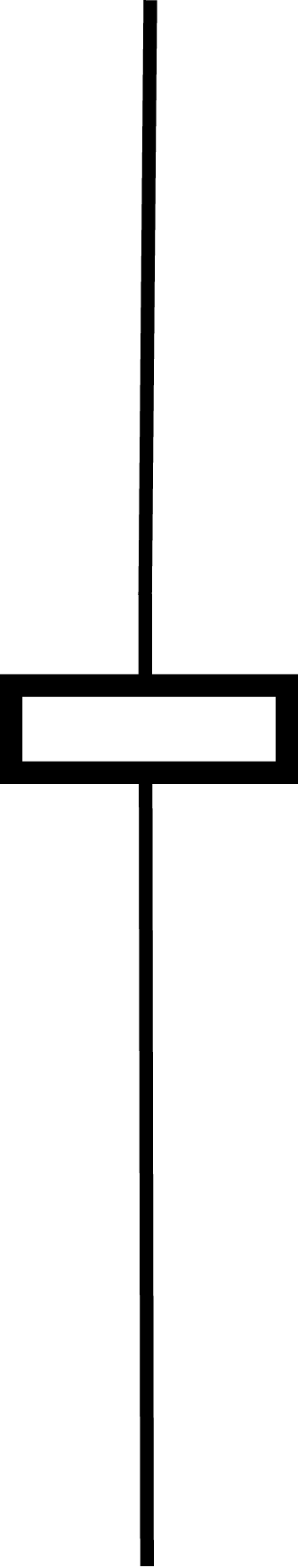}}
           \put(-19,+82){\footnotesize{$n$}}
   \end{minipage}
   \end{eqnarray}

Let $m, n, m^{\prime}, n^{\prime} \geq 0 $. The last relative Kauffman bracket skein module that we will need here is a certain submodule of the relative skein module of $D^2$ with $m+n+m^{\prime}+n^{\prime}$ specified points on its boundary. To define this submodule we partition this set of points into four sets of $m$, $n$, $m^{\prime}$, and $n^{\prime}$ points, respectively, and at each cluster of these points we place an appropriate idempotent, i.e., the one whose color matches the cardinality of this cluster (see Figure \ref{disk}). We will denote this skein module by $\mathscr{D}^{m,n}_{m^{\prime},n^{\prime}}$. Note that if $m+n+m^{\prime}+n^{\prime}$ is odd then $\mathscr{D}^{m,n}_{m^{\prime},n^{\prime}}=0$ 

\begin{figure}[H]
  \centering
   {\includegraphics[scale=0.13]{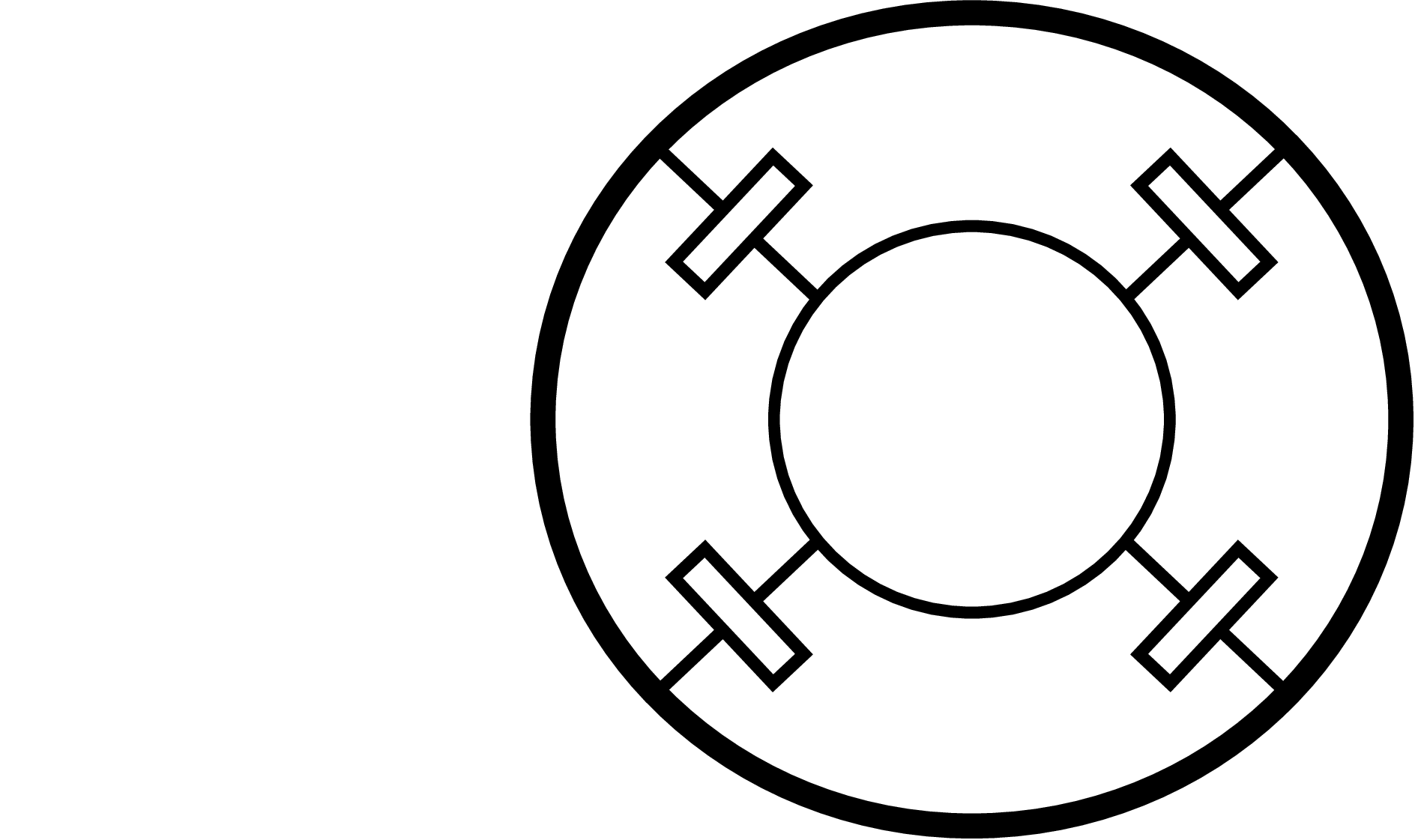}
    \put(-75,+62){$m$}
          \put(-10,+62){$n$}
          \put(-75,0){$m^{\prime}$}
          \put(-10,0){$n^{\prime}$}
            \caption{The relative skein module $\mathscr{D}^{m,n}_{m^{\prime},n^{\prime}}$}
  \label{disk}}
\end{figure}
We assume that a diagram is zero if it has a strand labeled by a negative number.\\

As we mentioned in the introduction, we are interested in a particular skein element in the module $\mathscr{D}^{m,n}_{m^{\prime},n^{\prime}}$. This element is shown in Figure \ref{bubble} and it is denoted by $\mathcal{B}_{m^{\prime},n^{\prime}}^{m,n}(k,l)$, where $k,l \geq 1$. We will call such an element in $\mathscr{D}^{m,n}_{m^{\prime},n^{\prime}}$ a \textit{bubble skein element}. For every bubble skein element $\mathcal{B}_{m^{\prime},n^{\prime}}^{m,n}(k,l)$, the integers $m,n,m^{\prime},n^{\prime}$, $k,l \geq 0$ and they satisfy $m+k=m^{\prime}+l$ and $n+k=n^{\prime}+l$.
\begin{figure}[H]
  \centering
   {\includegraphics[scale=0.13]{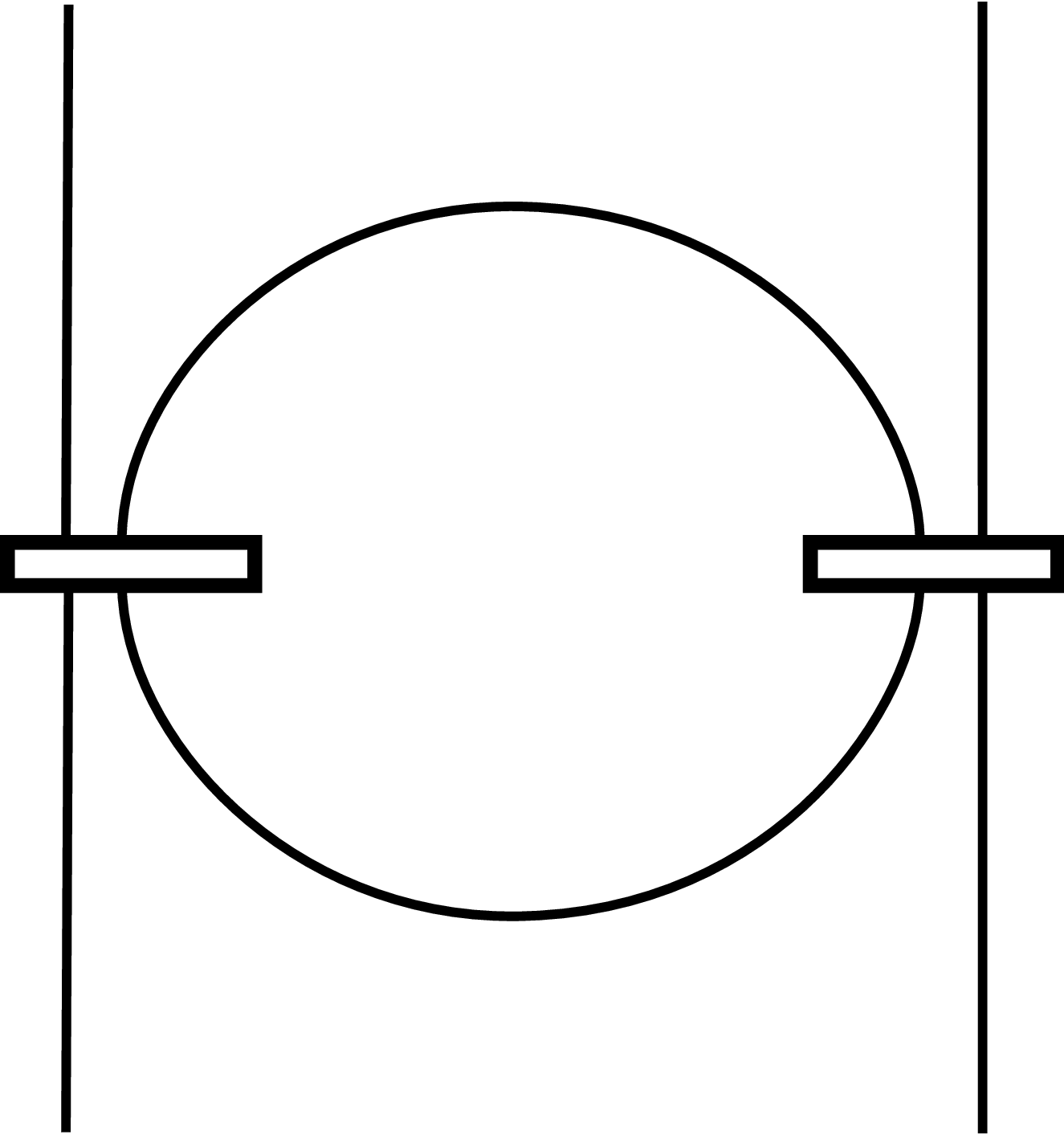}
   \put(-85,+90){$m$}
          \put(-8,+90){$n$}
          \put(-85,-7){$m^{\prime}$}
          \put(-8,-7){$n^{\prime}$}
           \put(-45,+75){$k$}
         \put(-45,+3){$l$}
  \caption{The bubble skein element $\mathcal{B}_{m^{\prime},n^{\prime}}^{m,n}(k,l)$}
  \label{bubble}}
\end{figure}

In this paper $\R$ will be $ \mathbb{Q}(A)$, the field generated by the indeterminate $A$ over the rational numbers.
\section{Main results}
\label{mainresults}
The main work of this paper gives an expansion the bubble skein element $\mathcal{B}_{m^{\prime},n^{\prime}}^{m,n}(k,l)$, defined in the previous section, in terms of a set of $ \mathbb{Q}(A)$-linearly independent skein elements in the module $\mathscr{D}^{m,n}_{m^{\prime},n^{\prime}}$ and gives an explicit determination of the coefficients obtained from this expansion.\\

Recall that the quantum binomial coefficients are defined by 
\begin{equation*}
{l \brack i}_{q}=\frac{(q;q)_l}{(q;q)_i(q;q)_{l-i}}.
\end{equation*}
where $(a;q)_n$ is $q$-Pochhammer symbol which is defined as 
\begin{equation*}
(a;q)_n=\prod\limits_{j=0}^{n-1}(1-aq^j).
\end{equation*}.
\begin{theorem}(The bubble expansion formula)
\label{main111}
Let $m,n,m^{\prime},n^{\prime} \geq 0$, and $k\geq l$;  $k,l \geq 1$. Then\\
{\small
\begin{eqnarray*}
\label{bubble expansion formula121}
  \begin{minipage}[h]{0.15\linewidth}
        \vspace{0pt}
        \scalebox{0.11}{\includegraphics{second-lemma-main-bubble}}
        \put(-68,+80){$m$}
          \put(-8,+80){$n$}
          \put(-68,-7){$m^{\prime}$}
          \put(-8,-7){$n^{\prime}$}
           \put(-37,+67){$k$}
         \put(-37,+3){$l$}
   \end{minipage}
   =\displaystyle\sum\limits_{i=0}^{\min(m,n,l)}
   \left\lceil 
\begin{array}{cc}
m & n \\ 
k & l%
\end{array}%
\right\rceil _{i}
    \begin{minipage}[h]{0.15\linewidth}
        \vspace{0pt}
        \scalebox{0.11}{\includegraphics{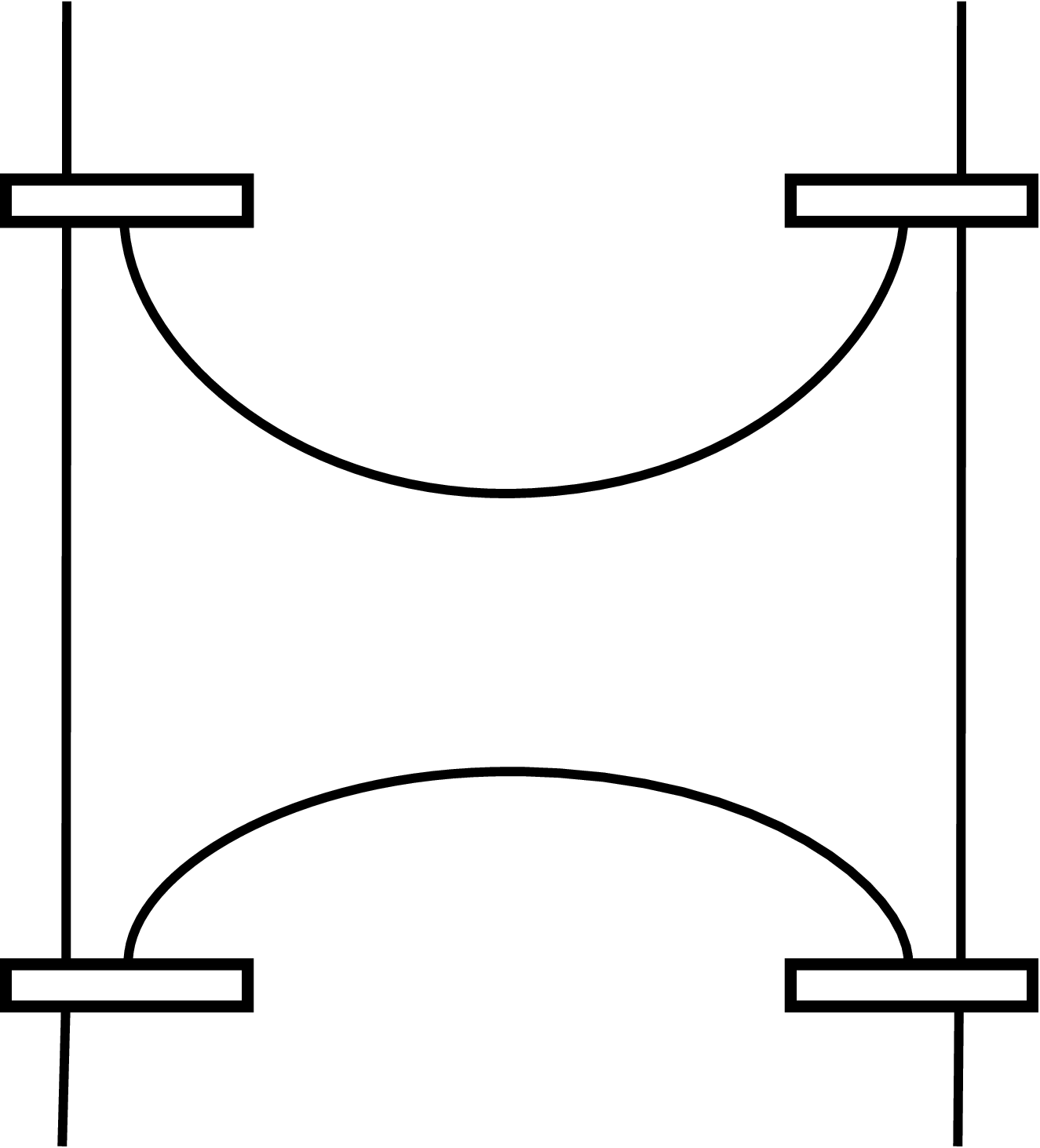}}
        \put(-68,+80){$m$}
          \put(-8,+80){$n$}
          \put(-68,-7){$m^{\prime}$}
          \put(-8,-7){$n^{\prime}$}
           \put(-37,+50){$i$}
         \put(-52,28){$k-l+i$}
   \end{minipage}.
  \end{eqnarray*}
}
where 
\begin{equation*}
\left\lceil 
\begin{array}{cc}
m & n \\ 
k & l%
\end{array}%
\right\rceil _{i}:=(-A^2)^{i(i-l)}\frac{\displaystyle\prod_{j=0}^{l-i-1}\Delta
_{k-j-1}\prod_{s=0}^{i-1}\Delta _{n-s-1}\Delta _{m-s-1}}{%
\displaystyle\prod_{t=0}^{l-1}\Delta _{n+k-t-1}\Delta _{m+k-t-1}}{l \brack i}_{A^4}\prod_{j=0}^{l-i-1}\Delta_{m+n+k-i-j}.
\end{equation*}
\end{theorem}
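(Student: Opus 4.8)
The plan is to prove the bubble expansion formula by induction on $l$, the color of the lower arc of the bubble (with $k\geq l$ arbitrary). The inductive engine is a recursion that lowers $l$ to $l-1$, and that recursion is obtained by inserting Wenzl's formula \eqref{recursive} into the definition of the bubble element. The skein elements appearing on the right-hand side of the asserted identity (those carrying internal colors $i$ and $k-l+i$) belong, by construction, to a $\mathbb{Q}(A)$-linearly independent family in $\mathscr{D}^{m,n}_{m^{\prime},n^{\prime}}$; this fact is not needed to prove the identity itself, but it is what makes the expansion canonical and is what is exploited in the applications of Section \ref{section5}.

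\emph{Step 1: the recursion.} Apply \eqref{recursive} to the idempotent $f^{(l)}$ sitting on the lower arc of $\mathcal{B}_{m^{\prime},n^{\prime}}^{m,n}(k,l)$. This writes the bubble as a difference of two diagrams: in the first, $f^{(l)}$ is replaced by $f^{(l-1)}$ running beside one through strand; in the second, $f^{(l)}$ is replaced by the Wenzl turnback, weighted by $\Delta_{l-2}/\Delta_{l-1}$. In the first diagram the lone strand is absorbed into the neighbouring idempotents using \eqref{properties}, and after isotopy one is left with bubble elements of lower parameter $l-1$. In the second diagram the turnback, together with the part of the bubble it now bounds, forms a bigon that is collapsed by the encircling relation \eqref{properties2} at the cost of an explicit quotient of $\Delta_{\bullet}$'s, again leaving a bubble of lower parameter $l-1$. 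Collecting, one obtains a two-term recursion
\begin{equation*}
\mathcal{B}_{m^{\prime},n^{\prime}}^{m,n}(k,l)=\lambda_{1}\,\mathcal{B}_{m^{\prime},n^{\prime}}^{m,n}(k,l-1)+\lambda_{2}\,\mathcal{B}_{m^{\prime},n^{\prime}}^{m,n}(k-1,l-1),
\end{equation*}
with $\lambda_{1},\lambda_{2}\in\mathbb{Q}(A)$ explicit ratios of $\Delta_{\bullet}$'s (the precise shifts of the remaining colors being pinned down in Section \ref{section3}).

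\emph{Step 2: the induction.} The base case $l=0$ is immediate: the bubble is already the internal strand colored $k$, the right-hand sum has only its $i=0$ term (that same diagram), and $\bigl\lceil\begin{smallmatrix} m & n\\ k & 0\end{smallmatrix}\bigr\rceil_{0}$ is an empty product equal to $1$. For the inductive step, assume the formula for $l-1$ and every admissible top color, and substitute the two corresponding expansions into the recursion of Step 1. Reading off the coefficient of the element with internal colors $i$ and $k-l+i$ (matching it against the appropriately indexed terms of the two right-hand expansions, which are the only ones with compatible internal colors), the theorem reduces to the assertion that $\bigl\lceil\begin{smallmatrix} m & n\\ k & l\end{smallmatrix}\bigr\rceil_{i}$ equals a $\lambda_{1},\lambda_{2}$-weighted sum of two coefficients of the shape $\bigl\lceil\begin{smallmatrix}\cdot & \cdot\\ \cdot & l-1\end{smallmatrix}\bigr\rceil_{\cdot}$, i.e., that the displayed closed form is the unique solution of the recursion with the initial data coming from $l=0$.

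\emph{Step 3: the scalar identity, which is the main obstacle.} Everything now hinges on verifying this $\Delta$-product identity, and this is where the real work lies. The approach is careful bookkeeping. Rewrite $\bigl\lceil\begin{smallmatrix} m & n\\ k & l\end{smallmatrix}\bigr\rceil_{i}$ in telescoped form, with numerator $\prod_{j=0}^{l-i-1}\bigl(\Delta_{k-j-1}\Delta_{m+n+k-i-j}\bigr)\prod_{s=0}^{i-1}\bigl(\Delta_{m-s-1}\Delta_{n-s-1}\bigr)$ over the denominator $\prod_{t=0}^{l-1}\Delta_{m+k-t-1}\Delta_{n+k-t-1}$, times ${l\brack i}_{A^{4}}$ and the sign $(-A^{2})^{i(i-l)}$. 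Track how each telescoping product and the sign transform under $l\mapsto l-1$ and under the color shift appearing in $\lambda_{2}$; factor out the part common to the two right-hand terms; and check that what survives is exactly the $q$-Pascal recursion ${l\brack i}_{A^{4}}={l-1\brack i-1}_{A^{4}}+A^{4(l-i)}{l-1\brack i}_{A^{4}}$, with $\lambda_{1}$ and $\lambda_{2}$ producing the two weights after the substitution $a=A^{2}$ that turns $\Delta_{n}=(-1)^{n}[n+1]$ into quantum integers. The delicate points are the two ends of the summation range: at $i=0$ one of the two right-hand contributions drops out (its other internal color would be negative), and at $i=\min(m,n,l)$ a numerator product becomes empty; one must also keep the signs in $(-A^{2})^{i(i-l)}$ and in the $\Delta_{\bullet}$'s consistent across the shifts in $l$ and $k$. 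Once this identity is checked, the induction closes and the theorem follows.
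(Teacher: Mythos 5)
Your Step 1 does not work, and since the whole induction rests on it, this is a genuine gap. The bubble element contains the side projectors $f^{(m+k)}$ and $f^{(n+k)}$ on the two arcs of the bigon (these are exactly the projectors the paper expands). Because of this, expanding the rung idempotent $f^{(l)}$ by Wenzl's formula yields nothing: in the first Wenzl term the replacement $f^{(l-1)}\otimes 1$ is simply re-absorbed by the side projectors via \eqref{properties}, so you recover the \emph{original} bubble $\mathcal{B}_{m^{\prime},n^{\prime}}^{m,n}(k,l)$ -- not a bubble ``of lower parameter $l-1$'' (the rung still carries $l$ strands); and in the second Wenzl term the turnback becomes, after absorbing the $f^{(l-1)}$'s, a cap joining two adjacent legs of $f^{(m+k)}$ (and a cup on $f^{(n+k)}$), which is $0$ by the hook relation in \eqref{properties}. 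So your expansion collapses to the tautology $\mathcal{B}=\mathcal{B}-\frac{\Delta_{l-2}}{\Delta_{l-1}}\cdot 0$. Moreover, the recursion you write down is not even well formed in $\mathscr{D}^{m,n}_{m^{\prime},n^{\prime}}$: the boundary data force $m+k=m^{\prime}+l$ and $n+k=n^{\prime}+l$, so an element $\mathcal{B}_{m^{\prime},n^{\prime}}^{m,n}(k,l-1)$ with the same external colors does not exist, while $\mathcal{B}_{m^{\prime},n^{\prime}}^{m,n}(k-1,l-1)$ does.

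The correct engine (Lemmas \ref{lemma1} and \ref{lemma2} of the paper) comes from expanding the \emph{side} projector $f^{(m+k)}$ (then $f^{(n+k)}$), which gives
$\mathcal{B}_{m^{\prime},n^{\prime}}^{m,n}(k,l)=\alpha_{m,n}^{k}\,\mathcal{B}_{m^{\prime},n^{\prime}}^{m,n}(k-1,l-1)+\beta_{m,n}^{k}\,E$,
where $E$ is \emph{not} a bubble of the same type but the element in which one strand is peeled off and runs from $f^{(m)}$ to $f^{(n)}$ above a bubble with data $(m-1,n-1;k,l-1)$. That change $(m,n)\mapsto(m-1,n-1)$, together with the resulting index shift $i\mapsto i-1$ in the coefficient recursion \eqref{coefrecur}, is precisely what produces the factors $\prod_{s=0}^{i-1}\Delta_{m-s-1}\Delta_{n-s-1}$ and the $q$-Pascal structure of ${l \brack i}_{A^4}$ in the closed form; your recursion, which never shifts $m$ and $n$, cannot generate them. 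Your Steps 2--3 (substitute the expansions, compare coefficients of the standard elements -- where linear independence is in fact what the paper uses to equate coefficients -- and check that the closed formula solves the resulting two-term recursion, the verification the paper leaves to the reader in Theorem 4.7) are the right shape of argument, but they must be run on the side-projector recursion above, not on the one you derived.
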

We give two applications of the previous theorem. The first one gives a relation between the coefficient $\left\lceil 
\begin{array}{cc}
m & n \\ 
k & k%
\end{array}%
\right\rceil _{0}$ and the theta graph $\Lambda(m,n,k)$ (see Figure \ref{theta}) in $\mathcal{S}(S^2)$.
\begin{proposition}
\begin{equation*}
  \begin{minipage}[h]{0.125\linewidth}
       \scalebox{0.10}{\includegraphics{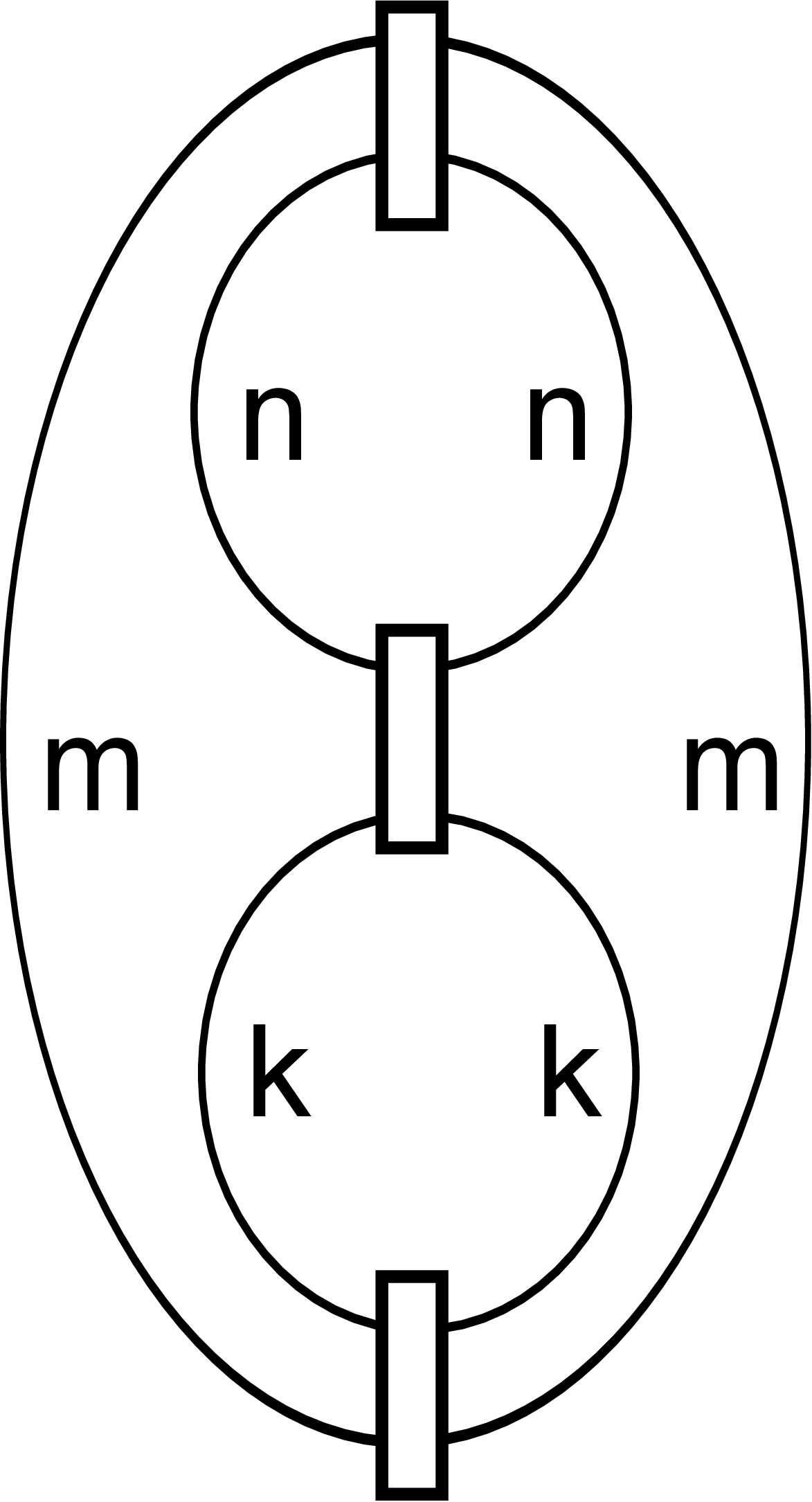}}
   \end{minipage}
   =\left\lceil 
\begin{array}{cc}
m & n \\ 
k & k%
\end{array}%
\right\rceil _{0}\Delta_{m+n}.
   \end{equation*}
\end{proposition}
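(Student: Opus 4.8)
The plan is to specialize the bubble expansion formula (Theorem~\ref{main111}) to the case $k=l$, $m'=n'=0$, and then close up the resulting diagram into $S^2$. When $m'=n'=0$, the bubble skein element $\mathcal{B}_{0,0}^{m,n}(k,k)$ lives in the relative module $\mathscr{D}^{m,n}_{0,0}$, which is one-dimensional (spanned by the single diagram whose $m$ strands on top cap off to the $n$ strands on top, i.e.\ nonzero only when $m=n$, and in general by the diagram with a $\min$-type through-strand pattern); more to the point, the expansion diagrams on the right-hand side with parameter $i$ become, after setting $k=l$, diagrams with a strand labeled $i$ and a strand labeled $k-l+i=i$. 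First I would write out the formula with these substitutions and observe that the coefficient $\left\lceil\begin{smallmatrix} m & n \\ k & k\end{smallmatrix}\right\rceil_i$ and the $i$-th diagram on the right both have a clean interpretation.

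Next, the key step: take the bubble skein element with $k=l$ and $m'=n'=0$ and view it inside $S^2$ by connecting the $m$ boundary points on the upper left to the $m'=0$... — rather, the correct move is to recognize that the left-hand side diagram in $\mathscr{D}^{m,n}_{0,0}$, when its top $m$-cluster and $n$-cluster are joined appropriately and the whole picture is embedded in $S^2$, becomes exactly the theta graph $\Lambda(m,n,k)$ of Figure~\ref{theta}: the two arcs labeled $k$ (the bubble) together with the $m$- and $n$-colored arcs form the three edges of the theta graph. Concretely, I would close the bubble diagram $\mathcal{B}_{0,0}^{m,n}(k,k)$ up in $S^2$ and identify the result with $\left\langle\Lambda(m,n,k)\right\rangle$, the theta evaluation in $\mathcal{S}(S^2)\cong\mathbb{Q}(A)$.

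Then I apply the same closure operation to the right-hand side of Theorem~\ref{main111}. Each summand's diagram, once closed in $S^2$, reduces by the properties in \eqref{properties} and \eqref{properties2}: the diagram with through-strands labeled $i$ and $i$ collapses, using the idempotent absorption and the loop-value relation, so that only the $i=0$ term survives (for $i\geq 1$ one gets a Jones--Wenzl idempotent $f^{(m)}$ or $f^{(n)}$ with a strand turning back into it, which is zero by the last identity in \eqref{properties}, or one simply gets a subleading diagram that evaluates to a nonzero multiple of $\Delta$'s but does not match the theta — I would check which). In the surviving case $i=0$, the closed diagram is a single $(m{+}n)$-colored circle, evaluating to $\Delta_{m+n}$. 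Collecting, the closure of the left side is $\left\langle\Lambda(m,n,k)\right\rangle$ and the closure of the right side is $\left\lceil\begin{smallmatrix} m & n \\ k & k\end{smallmatrix}\right\rceil_0\,\Delta_{m+n}$, which is the claim.

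The main obstacle I anticipate is the bookkeeping of the closure: making precise exactly how the boundary points of $\mathscr{D}^{m,n}_{0,0}$ are joined up so that $\mathcal{B}_{0,0}^{m,n}(k,k)$ becomes the theta graph rather than some other spin network, and verifying that under that same closure every $i\geq 1$ term in the expansion genuinely vanishes (as opposed to contributing extra terms that would spoil the identity). This requires a careful look at Figure~\ref{bubble} and Figure~\ref{theta} and a correct application of the vanishing relation for a Jones--Wenzl idempotent with a turn-back, together with \eqref{properties2} to evaluate the nested loops; once the geometry of the closure is pinned down, the algebra is routine.
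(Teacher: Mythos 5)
Your overall strategy is the same as the paper's proof of Lemma \ref{thetag}: expand the bubble sitting inside the theta element via Theorem \ref{main111}, argue that only the $i=0$ term survives, and evaluate that term as $\Delta_{m+n}$. Two points, however, need repair before this is a proof. First, the bubble appearing in $\Lambda(m,n,k)$ is $\mathcal{B}^{m,n}_{m,n}(k,k)$, not $\mathcal{B}^{m,n}_{0,0}(k,k)$: the boundary data of any bubble element satisfy $m+k=m'+l$ and $n+k=n'+l$, so $k=l$ forces $m'=m$ and $n'=n$, and the element $\mathcal{B}^{m,n}_{0,0}(k,k)$ only makes sense when $m=n=0$. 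The opening discussion of $\mathscr{D}^{m,n}_{0,0}$ is therefore a red herring, and the description of the closure ("the two arcs labeled $k$ together with the $m$- and $n$-colored arcs form the three edges") is also off: $\Lambda(m,n,k)$ is the theta graph whose \emph{internal} colors are $m,n,k$, i.e.\ whose edges carry the projectors $f^{(m+n)}$, $f^{(m+k)}$, $f^{(n+k)}$; the two projectors $f^{(m+k)}$, $f^{(n+k)}$ of the bubble are two of its edges, and the closure runs the top $m$- and $n$-clusters through the third edge $f^{(m+n)}$ around to the bottom clusters.

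Second, and more importantly, you leave the decisive step --- the vanishing of the $i\geq 1$ terms --- as something "to check," even allowing the alternative that they contribute nonzero multiples of $\Delta$'s, which would destroy the identity. This must be settled, and it is settled by exactly the mechanism you name: in the $i$-th expansion diagram the arc of $i$ strands joins the top $m$-cluster to the top $n$-cluster, and since in the theta closure both clusters feed into the common projector $f^{(m+n)}$, that arc leaves and re-enters $f^{(m+n)}$, so the whole term is zero by the last identity in (\ref{properties}); the alternative scenario does not occur. For $i=0$ (note $k-l+i=0$ here) the closed diagram is a single $(m+n)$-colored unknot, contributing $\Delta_{m+n}$, which is precisely how the paper concludes. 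With these two corrections your argument coincides with the paper's.
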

The tail of an alternating link $L$ is a $q$-series associated to $L$. The second application that we give to the bubble expansion formula is showing that this expansion can be used to study and compute the tail of alternating links. In particular, we give a simple formula for the tail of the knot $8_5$:
\begin{proposition}
\begin{eqnarray*}
T_{8_5}(q)=(q;q)_{\infty}^2\sum\limits_{k=0}^{\infty}\frac{q^{k+k^{2}}}{%
(q;q)_{k}}(\sum\limits_{i=0}^{k}q^{(-2i(k-i))}\left[ 
\begin{array}{c}
k \\ 
i%
\end{array}%
\right] _{q}^{2}).
\end{eqnarray*}
\end{proposition}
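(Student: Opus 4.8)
The plan is to realize $T_{8_5}(q)$ as the stable limit of the decorated all-$B$ state $S_B^{(n)}(D)$ and to evaluate that limit using Theorem~\ref{main111}. Since $8_5$ is alternating, hence adequate, a reduced alternating diagram $D$ has a well-defined tail; and, by the results recalled in the introduction (\cite{Cody}, \cite{Cody2}), the coefficients of $T_{8_5}(q)$ are exactly the stabilizing coefficients of the Kauffman bracket of $S_B^{(n)}(D)$, evaluated as a scalar via $\mathcal{S}(S^2)\cong\mathbb{Q}(A)$, once one rewrites $A$ in terms of $q$ (one common choice being $q=A^{-4}$) and multiplies by the explicit framing/writhe correction. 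So the first, purely diagrammatic, step is to draw the reduced alternating diagram of $8_5$, replace each crossing by its $B$-smoothing to get the all-$B$ state $S_B(D)$, decorate each circle of $S_B(D)$ with $f^{(n)}$, and insert a $(2n)$-th Jones--Wenzl projector at each former crossing to obtain $S_B^{(n)}(D)$.

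Second, I would recognize $S_B^{(n)}(D)$ as being assembled from bubble skein elements. For $8_5$ the all-$B$ state is a small planar configuration, and after an isotopy in $S^2$ the decorated element is built from copies of $\mathcal{B}^{m,n}_{m',n'}(k,l)$ as in Figure~\ref{bubble} --- one copy for each twist region of the diagram --- with the relevant boundary colours all equal to $n$ and the interior colours matching the bigon counts of the twist regions. Applying Theorem~\ref{main111} to each bubble in turn replaces it by the finite sum $\sum_i\left\lceil\begin{smallmatrix}m&n\\k&l\end{smallmatrix}\right\rceil_i(\cdots)$ and strictly simplifies the underlying graph; after finitely many such reductions $S_B^{(n)}(D)$ is an explicit $\mathbb{Q}(A)$-linear combination of closed trivalent spin networks (loops, theta graphs, at worst a single tetrahedral net), whose Kauffman brackets are classical. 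In particular the theta evaluation needed here is precisely the one recalled above, $\Lambda(m,n,k)=\left\lceil\begin{smallmatrix}m&n\\k&k\end{smallmatrix}\right\rceil_0\Delta_{m+n}$, so $\langle S_B^{(n)}(D)\rangle$ becomes a finite sum of products of the coefficients $\left\lceil\,\cdot\,\right\rceil$ and quantum dimensions $\Delta_\bullet$.

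Third, I would pass to the stable limit $n\to\infty$ and rewrite everything in $q$. Writing $q=A^{-4}$, the building blocks have controlled asymptotics: $(-1)^c\Delta_{n+c}$ is an explicit power of $A$ times $1-q^{\,n+c+1}$, which tends to $1$; a block $\prod_j\Delta_{m+n+k-i-j}$ contributes, after extracting its leading power of $A$, a truncation of $(q;q)_\infty$; and $\left[\begin{smallmatrix}l\\i\end{smallmatrix}\right]_{A^4}$ becomes the $q$-binomial $\left[\begin{smallmatrix}l\\i\end{smallmatrix}\right]_q$. Hence each coefficient $\left\lceil\begin{smallmatrix}m&n\\k&l\end{smallmatrix}\right\rceil_i$ has a well-defined limit of the shape (monomial in $q$)$\times$(ratio of $q$-Pochhammers)$\times\left[\begin{smallmatrix}l\\i\end{smallmatrix}\right]_q$. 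With the colours specialised as forced by the $8_5$ diagram, and using that the bubble expansion is applied twice here (the two twist regions of the diagram, whose summation indices are identified by the intervening Jones--Wenzl idempotent), the two resulting $q$-binomials multiply to $\left[\begin{smallmatrix}k\\i\end{smallmatrix}\right]_q^2$ and the two monomials to $q^{-2i(k-i)}$; the remaining blocks $\prod_j\Delta_{k-j-1}$ and $\prod_t\Delta_{n+k-t-1}$ (together with the framing correction) collapse to the common factor $q^{k+k^2}/(q;q)_k$, and the projectors that are not absorbed into any bubble leave the overall factor $(q;q)_\infty^2$. Renaming the surviving twist-region colour $k$ and the bubble index $i$ and summing yields exactly the asserted formula.

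The step I expect to be the main obstacle is this last one --- the degree and power-of-$A$ bookkeeping. One must prove that every contribution whose minimal $q$-degree grows with $n$ actually drops out, so that the tail is genuinely the stated double sum (and not merely one of its truncations); and one must pin down the precise prefactor $q^{k+k^2}/(q;q)_k$, which only appears after carefully combining the $\prod_j\Delta_{k-j-1}$ and $\prod_t\Delta_{n+k-t-1}$ blocks in the definition of $\left\lceil\,\cdot\,\right\rceil$ with the writhe-normalization of the alternating diagram. A useful independent check, which also fixes the sign and variable conventions, is to compare the first several coefficients of the resulting series against the tabulated colored Jones data of $8_5$.
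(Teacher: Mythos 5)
Your overall strategy --- reduce to $S_B^{(n)}$ via Armond's theorem, expand the bubbles with Theorem \ref{main111}, evaluate the resulting closed networks, convert to $q$-Pochhammers and take the stable limit --- is exactly the route the paper takes (the paper phrases it for the family $\Gamma$ generalizing $8_5$, which has the same reduced $B$-graph, but that difference is minor). The genuine gap is that what you defer as ``the main obstacle'' is precisely the content of the proof, and the specific mechanism you guess for how the stated series emerges is not what actually happens. In the actual computation the bubble expansion is applied five times, not twice: two bubbles contribute \emph{independent} summation indices $i$ and $j$ with coefficients $\left\lceil\begin{smallmatrix}n&n\\n&n\end{smallmatrix}\right\rceil_i$ and $\left\lceil\begin{smallmatrix}n&n\\n&n\end{smallmatrix}\right\rceil_j$ --- these indices are \emph{not} identified by the intervening idempotent --- while the remaining three expansions collapse to their $0$-th terms (the higher terms are killed by the adjacent projectors), i.e.\ to theta-type coefficients $\left\lceil\cdot\right\rceil_0$; no tetrahedral evaluation occurs. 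One is left with a genuine double sum of products of these coefficients with $\frac{\Delta_{2n}}{\Delta_{n+i}}\frac{\Delta_{2n}}{\Delta_{n+j}}\Delta_{i+j}$.

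The claimed single-$k$ shape only appears afterwards: writing each coefficient in Pochhammer form, discarding factors that are $\doteq_n 1$ (this is where one must argue that contributions whose minimal degree grows with $n$ --- controlled by the monomial $q^{i+i^2+j+j^2}$ --- are negligible, and it is the step you flag but do not carry out), and then reindexing by $k=i+j$. In particular ${k\brack i}_q^2$ arises from $(q;q)_{i+j}\big/\bigl((q;q)_i(q;q)_j\bigr)^2=\frac{1}{(q;q)_k}{k\brack i}_q^2$, and $q^{k+k^2-2i(k-i)}=q^{i+i^2+j+j^2}$; neither is a product of ``two bubble binomials'' with identified indices, and the prefactor $q^{k+k^2}/(q;q)_k$ comes from this recombination rather than from the $\prod_j\Delta_{k-j-1}$, $\prod_t\Delta_{n+k-t-1}$ blocks together with a writhe correction (the framing normalization is irrelevant here, since $\doteq_n$ already allows an overall monomial $q^{\pm s}$). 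So your proposal identifies the right tools, but as written it is an outline whose decisive computation is missing and whose description of how the final $q$-series is assembled does not match what the expansion actually produces.
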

\section{A recursive formula for the bubble skein element}
\label{section3}
In this section we use the recursive definition of the Jones-Wenzl idempotent to obtain a recursive formula for the bubble skein element $\mathcal{B}_{m^{\prime},n^{\prime}}^{m,n}(k,l)$ in the module $\mathscr{D}^{m,n}_{m^{\prime},n^{\prime}}$. To obtain a recursive formula for the element $\mathcal{B}_{m^{\prime},n^{\prime}}^{m,n}(k,l)$ we start by expanding the simple bubble element $\mathcal{B}_{m^{\prime},n^{\prime}}^{m,n}(k,1)$ in Lemma \ref{lemma1} and then use this expansion to obtain a recursion equation for an arbitrary bubble skein element in Lemma \ref{lemma2}. The recursive formula of $\mathcal{B}_{m^{\prime},n^{\prime}}^{m,n}(k,l)$ obtained in this section will be used in Theorem \ref{main} to write a bubble skein element as a $ \mathbb{Q}(A)$-linear sum of linearly independent elements in $\mathscr{D}^{m,n}_{m^{\prime},n^{\prime}}$.\\

We denote the rational functions
$\frac{\Delta
_{n+k}\Delta _{m+k-1}-\Delta _{n}\Delta _{m-1}}{\Delta _{n+k-1}\Delta
_{m+k-1}}$ and $\frac{\Delta _{m-1}\Delta _{n-1}}{\Delta
_{n+k-1}\Delta _{m+k-1}}$ by $\alpha_{m,n}^k$ and $\beta_{m,n}^k$, respectively.
\begin{lemma}
\label{lemma1}
Let $m,n,m^{\prime},n^{\prime} \geq 0$; $k\geq 1$. Then we have

\begin{eqnarray*}
   \begin{minipage}[h]{0.15\linewidth}
        \vspace{0pt}
        \scalebox{0.11}{\includegraphics{second-lemma-main-bubble}}
         \put(-68,+80){$m$}
          \put(-8,+80){$n$}
          \put(-68,-7){$m^{\prime}$}
          \put(-8,-7){$n^{\prime}$}
           \put(-37,+67){$k$}
         \put(-37,+3){$1$}
   \end{minipage}
   =
     \alpha_{m,n}^k
  \begin{minipage}[h]{0.15\linewidth}
        \vspace{0pt}
        \scalebox{0.11}{\includegraphics{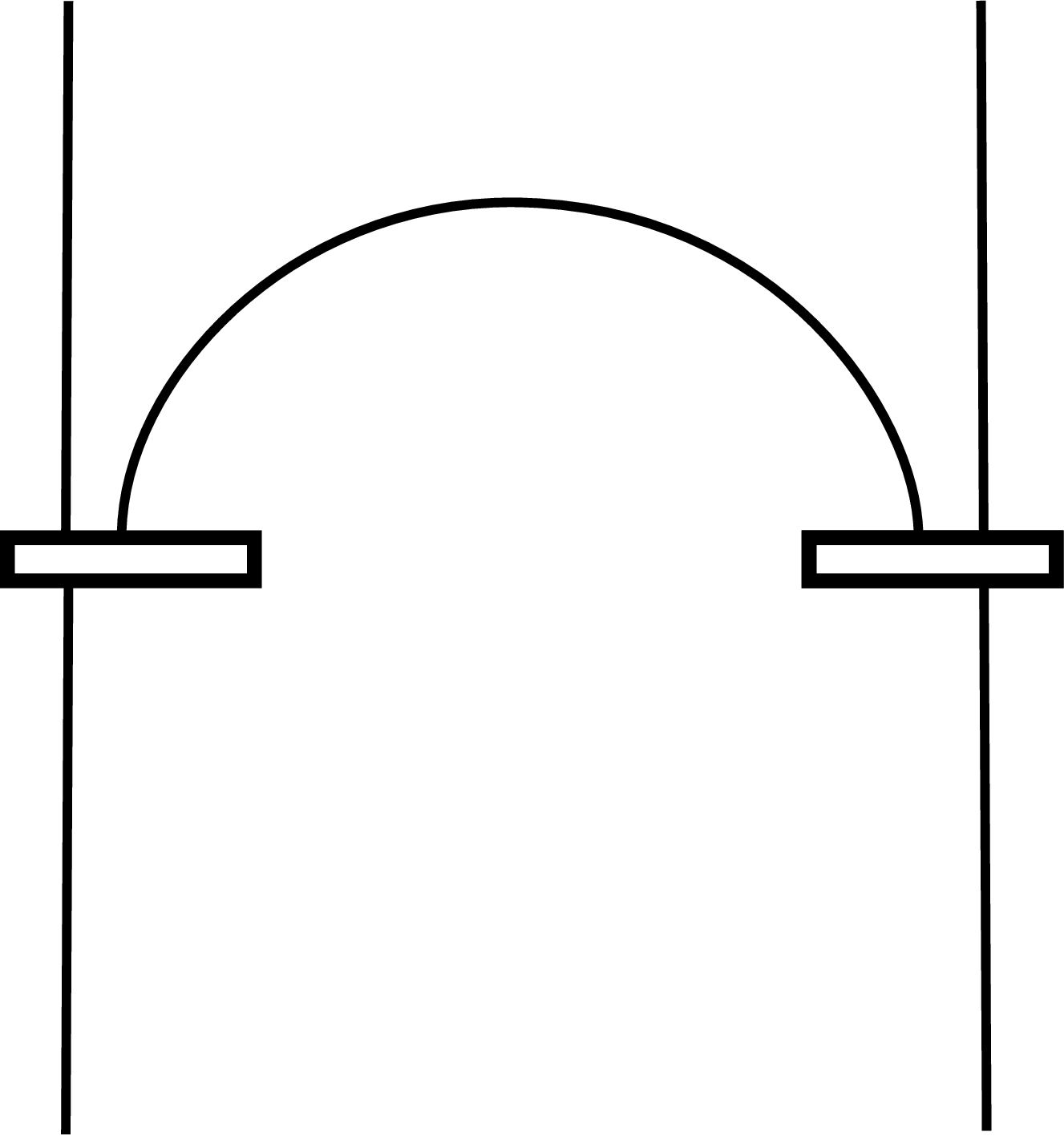}}
        \put(-68,+80){$m$}
          \put(-8,+80){$n$}
          \put(-68,-7){$m^{\prime}$}
          \put(-8,-7){$n^{\prime}$}
           \put(-45,+67){$k-1$}
          \end{minipage}
   +
  \beta_{m,n}^k
  \begin{minipage}[h]{0.15\linewidth}
        \vspace{0pt}
        \scalebox{0.11}{\includegraphics{bubbl-expansion-diagram}}
        \put(-68,+80){$m$}
          \put(-8,+80){$n$}
          \put(-68,-7){$m^{\prime}$}
          \put(-8,-7){$n^{\prime}$}
           \put(-38,+50){$1$}
         \put(-38,28){$k$}
   \end{minipage}.
\end{eqnarray*}

\end{lemma}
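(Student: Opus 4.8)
The plan is to prove Lemma \ref{lemma1} by applying the Wenzl recursion \eqref{recursive} to the Jones-Wenzl idempotent $f^{(k)}$ that sits in the middle of the bubble element $\mathcal{B}_{m^{\prime},n^{\prime}}^{m,n}(k,1)$. Recall that in this picture the colour-$k$ strand forms a bubble with the colour-$1$ strand, sandwiched between the two clusters carrying $f^{(m)}, f^{(n)}$ on the top and $f^{(m')}, f^{(n')}$ on the bottom. Since the bottom label on the bubble is $1$, the relevant idempotent to expand is the one on the $k$-strand; feeding in $f^{(k)} = f^{(k-1)}\otimes 1 - \tfrac{\Delta_{k-2}}{\Delta_{k-1}}\,(\text{turnback term})$ splits the diagram into exactly two pieces, which I claim are (after normalization) precisely the two diagrams appearing on the right-hand side with coefficients $\alpha_{m,n}^k$ and $\beta_{m,n}^k$.

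First I would deal with the first term, where $f^{(k)}$ is replaced by $f^{(k-1)}\otimes 1$. The lone strand coming off $f^{(k-1)}\otimes 1$ on the right can be slid around the bubble and fused with the colour-$1$ strand of the bubble; using the absorption property \eqref{properties} (the middle identity, $f^{(a)}$ absorbs $f^{(a+b)}$) together with relation \eqref{properties2} to remove the resulting small loop, this piece collapses to a colour-$(k-1)$ bubble, i.e.\ the first diagram on the right, up to a scalar. Tracking that scalar is a short computation with $\Delta$'s: the loop removal contributes a ratio of quantum dimensions, and I expect exactly $\frac{\Delta_{n+k}\Delta_{m+k-1}}{\Delta_{n+k-1}\Delta_{m+k-1}}$-type factors, combining to the first summand of $\alpha_{m,n}^k$. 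More carefully, one writes $f^{(k-1)}\otimes 1$ and reattaches, getting the first picture with coefficient $1$ but with the wrong local idempotent structure near where the $1$-strand met the cluster; resolving this by re-expanding once more, or by a direct evaluation of the small tangle, produces the two contributions $\Delta_{n+k}\Delta_{m+k-1}/(\Delta_{n+k-1}\Delta_{m+k-1})$ and a correction that gets folded into $\beta$.

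Next, for the turnback (second) term from the Wenzl recursion, the turnback cap-cup on the $k$-strand, when composed with the bubble's $1$-strand and the caps joining into the $m,n$ and $m',n'$ clusters, forces the colour pattern to reorganize into the second diagram on the right: the turnback creates a colour-$1$ arc crossing the bubble region and a colour-$k$ strand running straight through, which is exactly the diagram $\mathcal{B}$-with-indices $i=1$, $k-l+i = k$ of the statement. The coefficient is $-\Delta_{k-2}/\Delta_{k-1}$ from the recursion, times whatever quantum-dimension factors come from closing off the small loops created against the four corner idempotents; I expect this to assemble into $\beta_{m,n}^k = \Delta_{m-1}\Delta_{n-1}/(\Delta_{n+k-1}\Delta_{m+k-1})$, possibly after combining with the correction term left over from the first part. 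A clean way to organize both contributions simultaneously is to evaluate, once and for all, the small two-colour theta-like tangle obtained by capping the $f^{(k)}$ against the $1$-strand and the adjacent $f^{(m)}, f^{(n)}$ idempotents — this is a standard fusion/bubble evaluation in $\mathcal{S}(S^2)$ — and then read off $\alpha, \beta$ as its two eigen-coefficients.

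**The main obstacle** I anticipate is bookkeeping rather than conceptual: keeping careful track of which idempotent each newly created loop or turnback is being evaluated against (the $m$-cluster versus the $n$-cluster versus the through-strand), since $\alpha_{m,n}^k$ is manifestly \emph{not} symmetric in an obvious way and its first summand $\Delta_{n+k}\Delta_{m+k-1}$ mixes the two sides, signalling that the first Wenzl term does not reduce cleanly to a single diagram but leaks a multiple of the second diagram. I would therefore expect the honest computation to require expanding the first term one level further (or invoking relation \eqref{properties2} in the form that turns a nested loop into $\Delta_{m+k-1}/\Delta_{k-1}$) so that the $\beta$-type diagram picks up contributions from \emph{both} Wenzl terms, and only their sum matches the stated $\beta_{m,n}^k$. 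Verifying that this sum telescopes correctly — in particular that the $-\Delta_{k-2}/\Delta_{k-1}$ factor and the leaked piece from the first term combine into the single clean ratio $\Delta_{m-1}\Delta_{n-1}/(\Delta_{n+k-1}\Delta_{m+k-1})$ — is the one place where a sign or a quantum-integer identity (e.g.\ $\Delta_{k}\Delta_{k-2} = \Delta_{k-1}^2 - 1$, equivalently the $[k+1][k-1]=[k]^2-1$ identity) has to be used, and that is where I would focus the careful checking.
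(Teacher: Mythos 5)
Your plan hinges on applying the Wenzl recursion to a projector $f^{(k)}$ sitting on the top arc of the bubble, but that is the wrong idempotent to expand, and the step is vacuous. In $\mathcal{B}^{m,n}_{m',n'}(k,1)$ the $k$-arc and the $1$-arc run between two \emph{internal} projectors $f^{(m+k)}$ and $f^{(n+k)}$ (this is the structure forced by $m+k=m'+1$ and $n+k=n'+1$, and these are the projectors the paper manipulates). Any $f^{(k)}$ placed on the $k$-arc is simply absorbed by $f^{(m+k)}$ (middle identity of (\ref{properties})), so when you expand it via (\ref{recursive}) the first term reproduces $\mathcal{B}^{m,n}_{m',n'}(k,1)$ unchanged, while in the turnback term the two flanking copies of $f^{(k-1)}$ are likewise absorbed and the remaining cap joins two adjacent strands emanating from $f^{(m+k)}$, which is zero by the hook property (third identity of (\ref{properties})). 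The net output of your expansion is the tautology $\mathcal{B}=\mathcal{B}$: it has no mechanism to produce the $\Delta_{n}\Delta_{m-1}$ piece in the numerator of $\alpha^{k}_{m,n}$, nor the coefficient $\beta^{k}_{m,n}$, and no amount of bookkeeping of loop removals fixes this, because nothing nontrivial has happened.

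The paper's proof expands the large internal projectors instead, and it is an induction on $k$ rather than a one-shot computation. One applies (\ref{recursive}) to $f^{(m+k)}$; the first resulting term closes a loop that is removed by (\ref{properties2}), and in the second term one expands $f^{(n+k)}$. The surviving piece contains the smaller bubble $\mathcal{B}^{m,n}_{m',n'}(k-1,1)$ nested inside, to which the induction hypothesis is applied, and collecting the two resulting contributions gives exactly $\alpha^{k}_{m,n}$ and $\beta^{k}_{m,n}$; the base case $k=1$ is handled by expanding $f^{(m+1)}$ and then $f^{(n+1)}$, and the degenerate cases where one of $m,n,m',n'$ vanishes are checked separately via (\ref{properties2}). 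Your proposal identifies neither the correct projectors to expand nor this inductive structure (your suggestion to ``expand one level further'' gestures at it but does not supply it), so as written the argument does not go through.
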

\begin{proof}
First we consider the trivial cases when one of the integers $m,n,m^{\prime},n^{\prime}$ is zero. Observe that $m+k=1+m^{\prime}$ and $n+k=1+n^{\prime}$. Since $k\geq 1$ , we know that $m\leq m^{\prime}$ and $n\leq n^{\prime}$. If $m^{\prime}=n^{\prime}=0$, then this implies that $m$ and $n$ must be zero and $k$ is $1$. Hence $\mathcal{B}_{0,0}^{0,0}(1,1) =\alpha_{0,0}^1=\Delta _{1}$ and we are done. Here we used our convention that a diagram is zero if it has a strand colored by a negative number. If $\min(m,n)=0$ or $\min(m^{\prime},n^{\prime})=0$ , then the result follows from (\ref{properties2}).
\\\\
When $\min(m,n)\neq 0$, we use induction on $k$. For $k=1$ we apply the recursive definition of the Jones-Wenzel idempotent (\ref{recursive}) on the projector $f^{(m+1)}$ that appears in $\mathcal{B}_{m^{\prime},n^{\prime}}^{m,n}(1,1)$ to obtain
{\small
\begin{eqnarray}
\label{special0}
  \begin{minipage}[h]{0.15\linewidth}
        \vspace{0pt}
        \scalebox{0.11}{\includegraphics{second-lemma-main-bubble}}
         \put(-68,+80){$m$}
          \put(-8,+80){$n$}
          \put(-68,-7){$m^{\prime}$}
          \put(-8,-7){$n^{\prime}$}
           \put(-37,+67){$1$}
         \put(-37,+3){$1$}
   \end{minipage}
   =\hspace{2pt}
    \begin{minipage}[h]{0.15\linewidth}
        \vspace{0pt}
        \scalebox{0.11}{\includegraphics{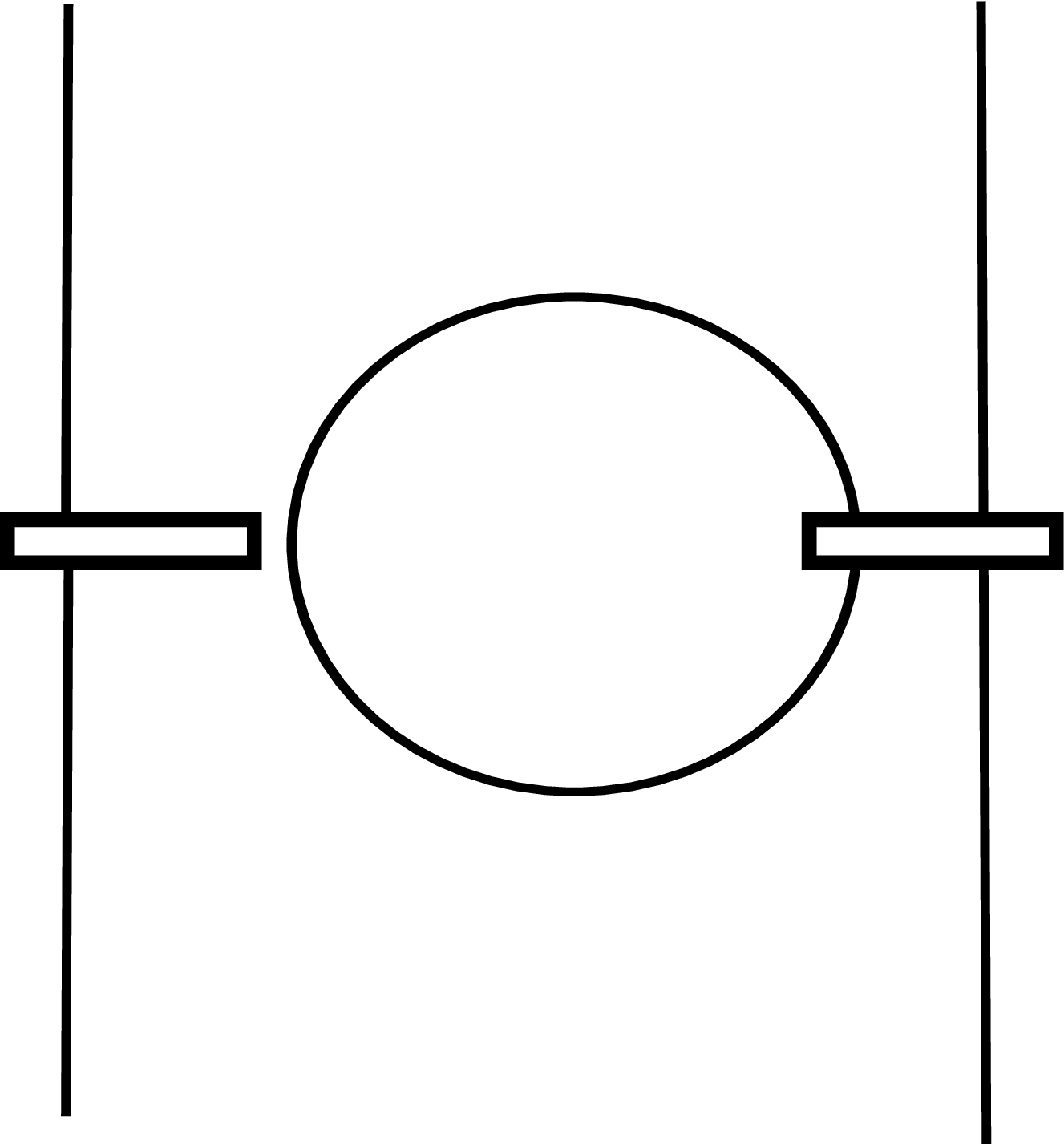}}
         \put(-68,+80){$m$}
          \put(-8,+80){$n$}
          \put(-68,-7){$m^{\prime}$}
          \put(-8,-7){$n^{\prime}$}
        \put(-37,+13){$1$}
   \end{minipage}
   -
  \frac{\Delta _{m-1}}{\Delta _{m}}
  \begin{minipage}[h]{0.16\linewidth}
        \vspace{0pt}
        \scalebox{0.11}{\includegraphics{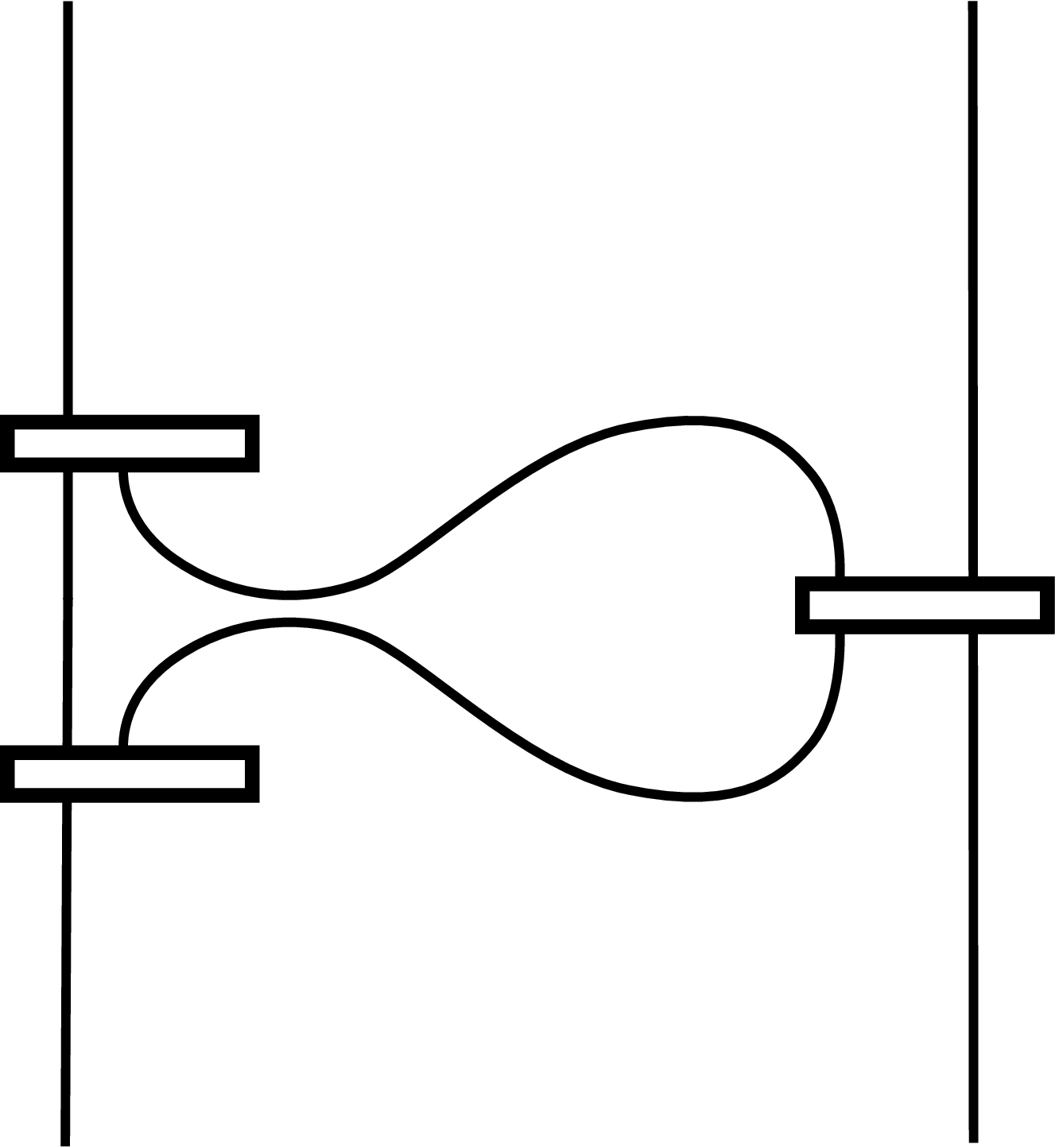}}
        \put(-68,+80){$m$}
          \put(-8,+80){$n$}
          \put(-68,-7){$m^{\prime}$}
          \put(-8,-7){$n^{\prime}$}
          \put(-37,+14){$1$}
         \put(-40,+48){$1$}
   \end{minipage}.
   \end{eqnarray}
}
 Using identity (\ref{properties2}) in the first term and expanding the projector $f^{(n+1)}$ the second term in (\ref{special0}) implies 
{\scriptsize
\begin{eqnarray}
\label{11}
   \begin{minipage}[h]{0.15\linewidth}
        \vspace{0pt}
        \scalebox{0.11}{\includegraphics{second-lemma-main-bubble}}
         \put(-68,+80){$m$}
          \put(-8,+80){$n$}
          \put(-68,-7){$m^{\prime}$}
          \put(-8,-7){$n^{\prime}$}
           \put(-37,+67){$1$}
         \put(-37,+3){$1$}
   \end{minipage}
   =
    \frac{\Delta _{n+1}\Delta _{m}-\Delta _{n}\Delta _{m-1}}{\Delta _{n}\Delta _{m}}
  \begin{minipage}[h]{0.15\linewidth}
        \vspace{0pt}
        \scalebox{0.11}{\includegraphics{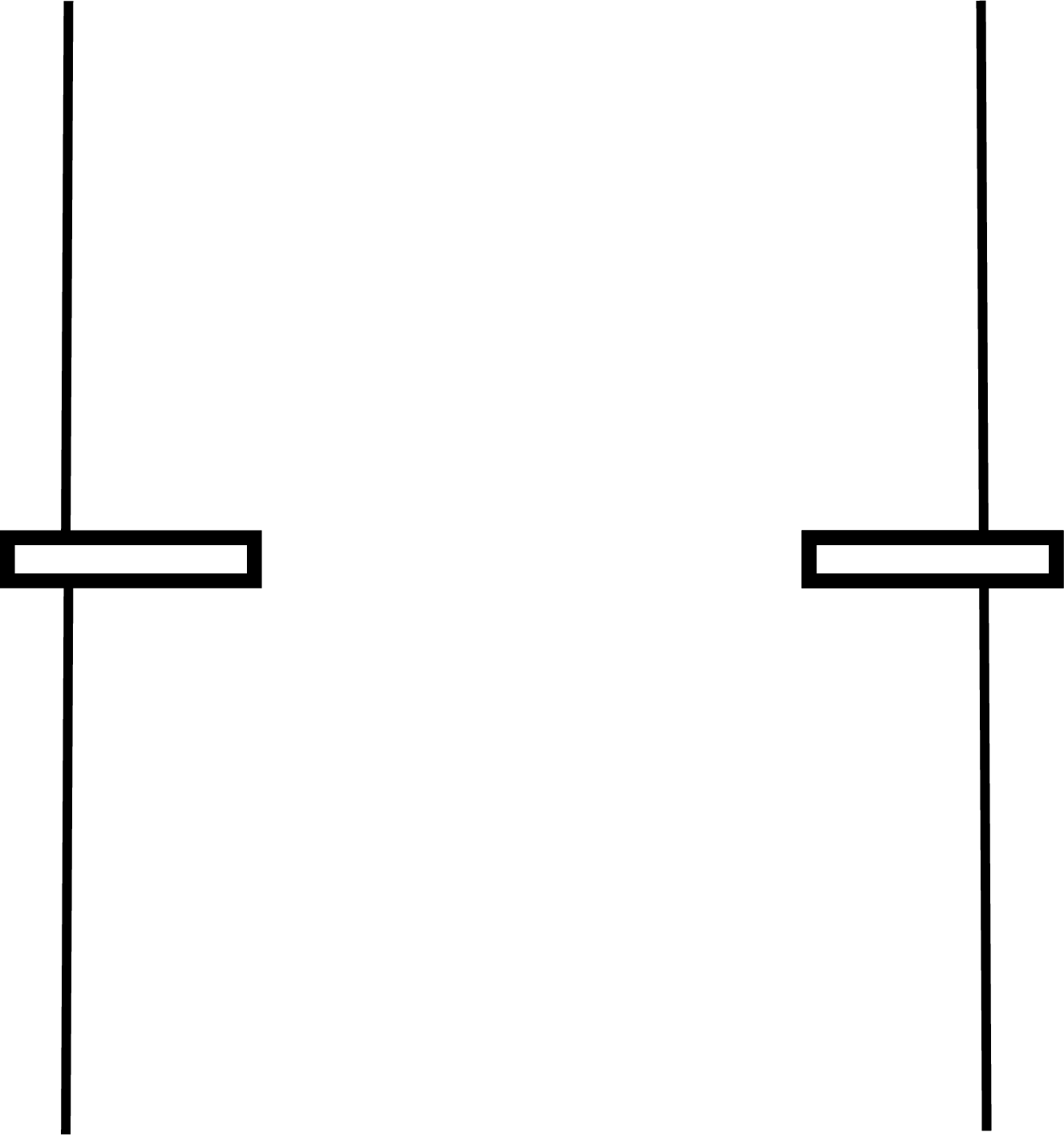}}
        \put(-68,+80){$m$}
          \put(-8,+80){$n$}
          \put(-68,-7){$m^{\prime}$}
          \put(-8,-7){$n^{\prime}$}
          \end{minipage}+
  \frac{\Delta _{m-1}\Delta _{n-1}}{\Delta _{n}\Delta _{m}}
  \begin{minipage}[h]{0.15\linewidth}
        \vspace{0pt}
        \scalebox{0.11}{\includegraphics{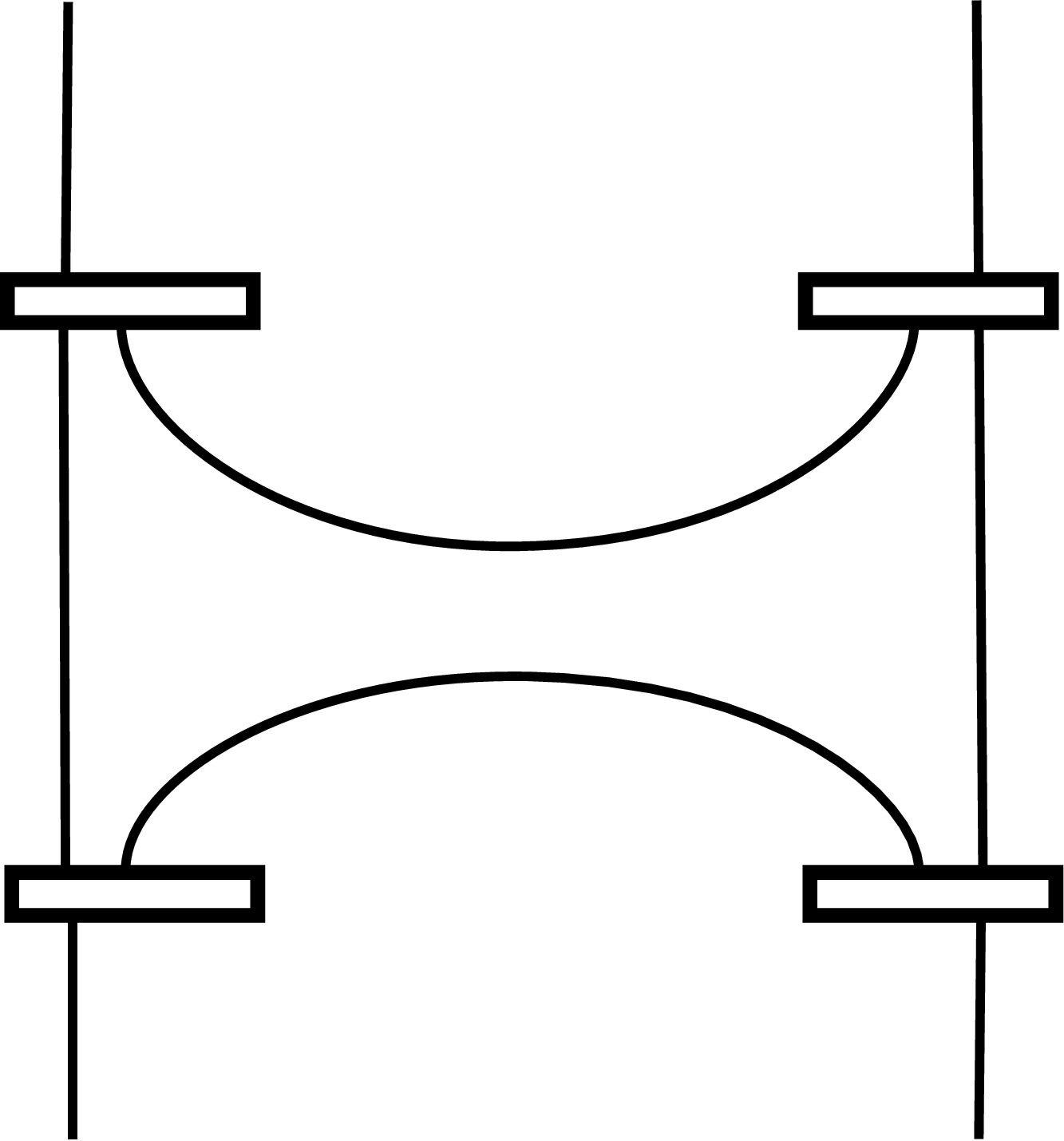}}
         \put(-68,+80){$m$}
          \put(-8,+80){$n$}
          \put(-68,-7){$m^{\prime}$}
          \put(-8,-7){$n^{\prime}$}
         \put(-38,+18){$1$}
         \put(-38,+42){$1$}
   \end{minipage}.
  \end{eqnarray}
}
For $k\geq2$, we use the recursive definition (\ref{recursive}) on the projector $f^{(m+k)}$ that appears in $\mathcal{B}_{m^{\prime},n^{\prime}}^{m,n}(k,1)$. Hence
{\small
\begin{eqnarray}
\label{special}
  \begin{minipage}[h]{0.15\linewidth}
        \vspace{0pt}
        \scalebox{0.11}{\includegraphics{second-lemma-main-bubble}}
         \put(-68,+80){$m$}
          \put(-8,+80){$n$}
          \put(-68,-7){$m^{\prime}$}
          \put(-8,-7){$n^{\prime}$}
           \put(-37,+67){$k$}
         \put(-37,+3){$1$}
   \end{minipage}
   =\hspace{2pt}
    \begin{minipage}[h]{0.15\linewidth}
        \vspace{0pt}
        \scalebox{0.11}{\includegraphics{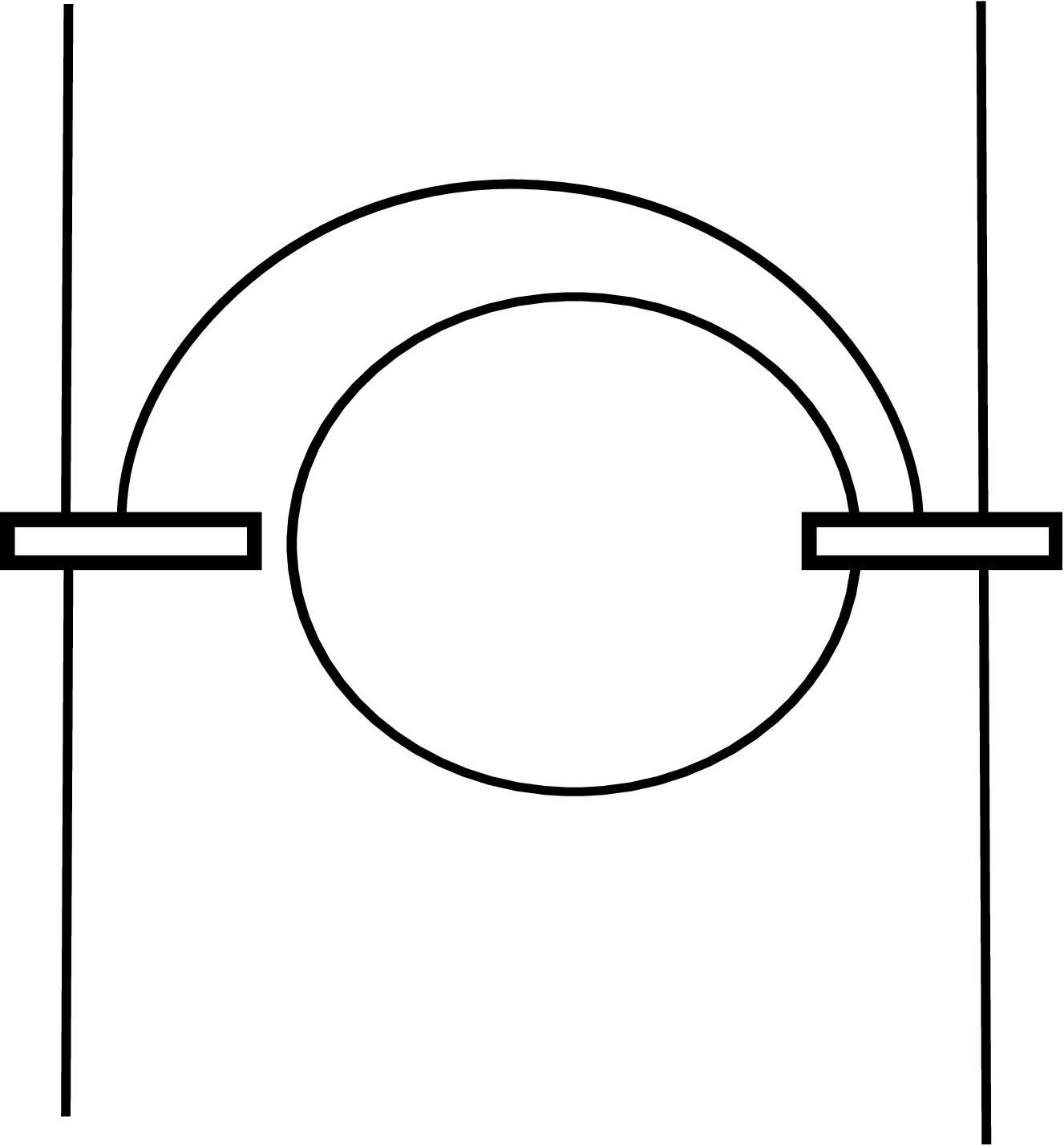}}
         \put(-68,+80){$m$}
          \put(-8,+80){$n$}
          \put(-68,-7){$m^{\prime}$}
          \put(-8,-7){$n^{\prime}$}
           \put(-45,+67){$k-1$}
         \put(-37,+13){$1$}
   \end{minipage}
   -
  \frac{\Delta _{m+k-2}}{\Delta _{m+k-1}}
  \begin{minipage}[h]{0.17\linewidth}
        \vspace{0pt}
        \scalebox{0.11}{\includegraphics{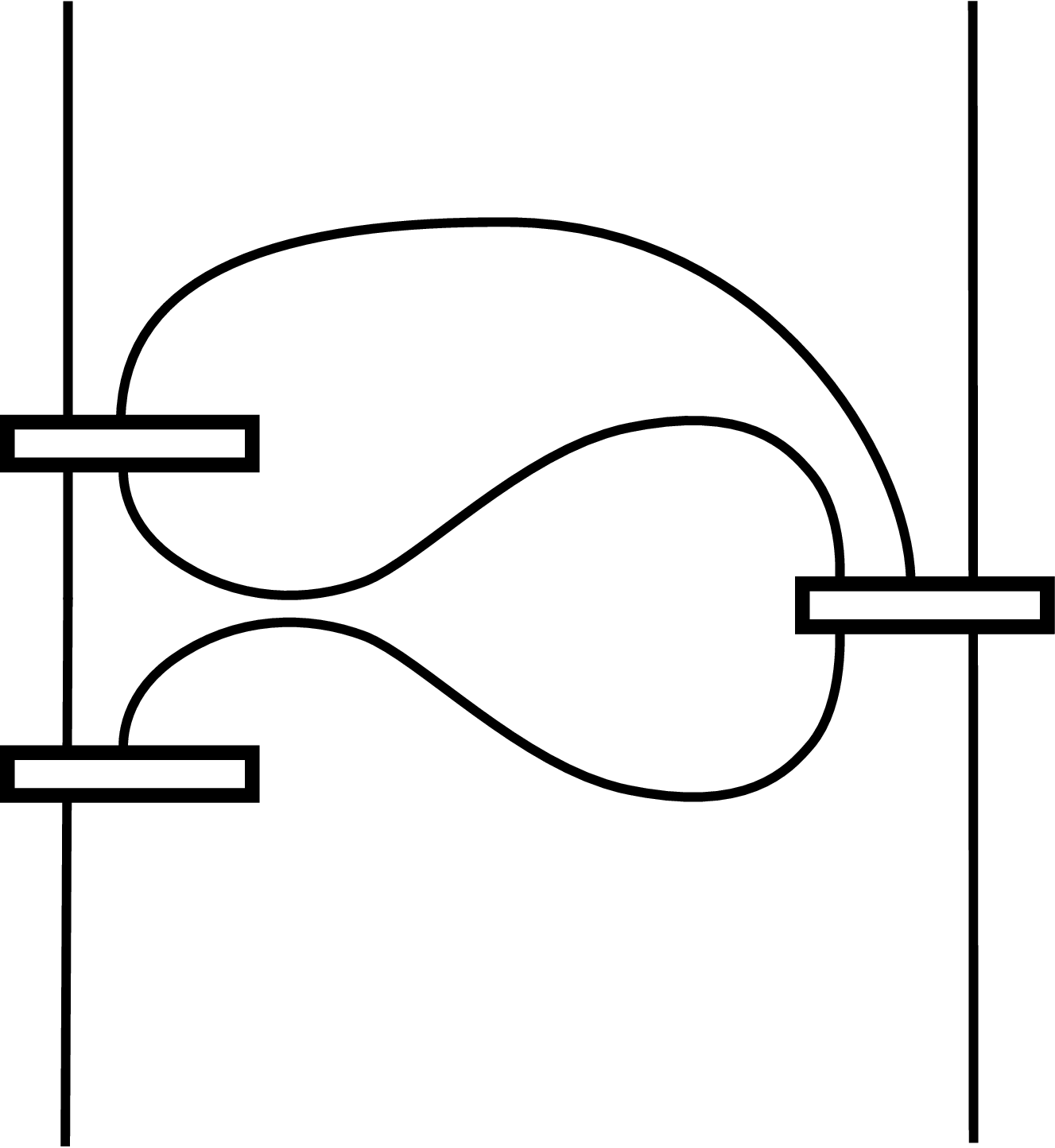}}
        \put(-68,+80){$m$}
          \put(-8,+80){$n$}
          \put(-68,-7){$m^{\prime}$}
          \put(-8,-7){$n^{\prime}$}
           \put(-45,+67){$k-1$}
         \put(-37,+14){$1$}
         \put(-40,+48){$1$}
   \end{minipage}.
   \end{eqnarray}
}
 Using identity (\ref{properties2}) and expanding the projector $f^{(n+k)}$ in (\ref{special}) implies 
{\scriptsize
\begin{eqnarray}
\label{11}
   \begin{minipage}[h]{0.15\linewidth}
        \vspace{0pt}
        \scalebox{0.11}{\includegraphics{second-lemma-main-bubble}}
         \put(-68,+80){$m$}
          \put(-8,+80){$n$}
          \put(-68,-7){$m^{\prime}$}
          \put(-8,-7){$n^{\prime}$}
           \put(-37,+67){$k$}
         \put(-37,+3){$1$}
   \end{minipage}
   =
    \frac{\Delta _{n+k}\Delta _{m+k-1}-\Delta _{n+k-1}\Delta _{m+k-2}}{\Delta _{m+k-1}\Delta _{n+k-1}}
  \begin{minipage}[h]{0.15\linewidth}
        \vspace{0pt}
        \scalebox{0.11}{\includegraphics{bubblefirstterm}}
        \put(-68,+80){$m$}
          \put(-8,+80){$n$}
          \put(-68,-7){$m^{\prime}$}
          \put(-8,-7){$n^{\prime}$}
           \put(-45,+67){$k-1$}
          \end{minipage}+
  \frac{\Delta _{m+k-2}\Delta _{n+k-2}}{\Delta _{m+k-1}\Delta _{n+k-1}}
  \begin{minipage}[h]{0.15\linewidth}
        \vspace{0pt}
        \scalebox{0.11}{\includegraphics{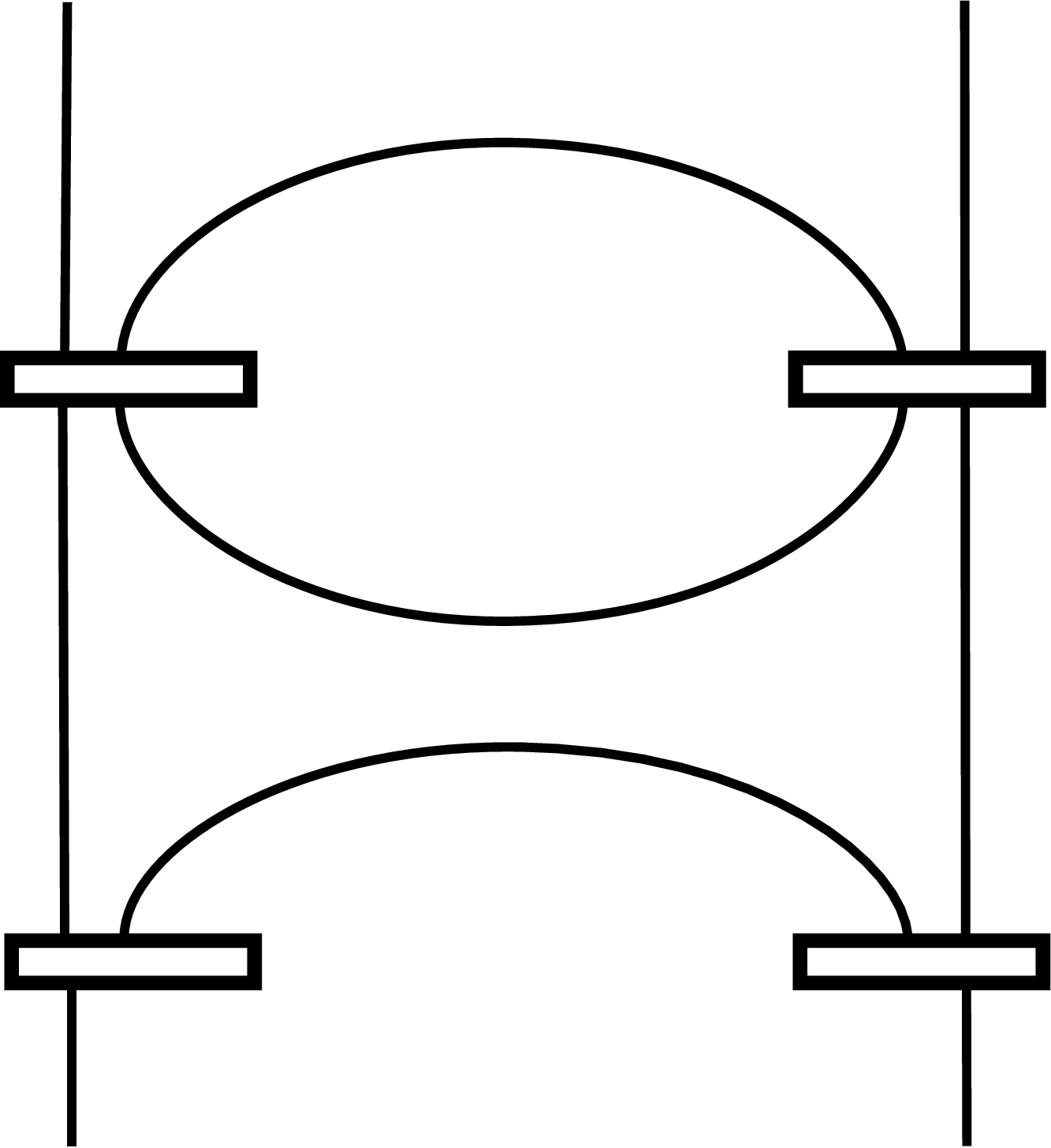}}
         \put(-68,+80){$m$}
          \put(-8,+80){$n$}
          \put(-68,-7){$m^{\prime}$}
          \put(-8,-7){$n^{\prime}$}
           \put(-45,+70){$k-1$}
         \put(-38,+15){$1$}
         \put(-38,+38){$1$}
   \end{minipage}.
  \end{eqnarray}
}
We apply the induction hypothesis on the bubble skein element $\mathcal{B}_{m^{\prime},n^{\prime}}^{m,n}(k-1,1)$ that appears in the second term of the last equation:

\begin{eqnarray*}
  \begin{minipage}[h]{0.15\linewidth}
        \vspace{0pt}
        \scalebox{0.11}{\includegraphics{second-lemma-main-bubble}}
         \put(-68,+80){$m$}
          \put(-8,+80){$n$}
          \put(-68,-7){$m^{\prime}$}
          \put(-8,-7){$n^{\prime}$}
           \put(-37,+67){$k$}
         \put(-37,+3){$1$}
   \end{minipage}
   &=&
     \frac{\Delta _{n+k}\Delta _{m+k-1}-\Delta _{n+k-1}\Delta _{m+k-2}}{\Delta _{m+k-1}\Delta _{n+k-1}}
  \begin{minipage}[h]{0.15\linewidth}
        \vspace{0pt}
        \scalebox{0.11}{\includegraphics{bubblefirstterm}}
        \put(-68,+80){$m$}
          \put(-8,+80){$n$}
          \put(-68,-7){$m^{\prime}$}
          \put(-8,-7){$n^{\prime}$}
           \put(-45,+67){$k-1$}
          \end{minipage}\\
   &+&
  \frac{\Delta _{m+k-2}\Delta _{n+k-2}}{\Delta _{m+k-1}\Delta _{n+k-1}}
  \bigg( \frac{\Delta _{n+k-1}\Delta _{m+k-2}-\Delta _{n}\Delta _{m-1}}{\Delta _{m+k-2}\Delta _{n+k-2}}
  \begin{minipage}[h]{0.15\linewidth}
        \vspace{0pt}
        \scalebox{0.11}{\includegraphics{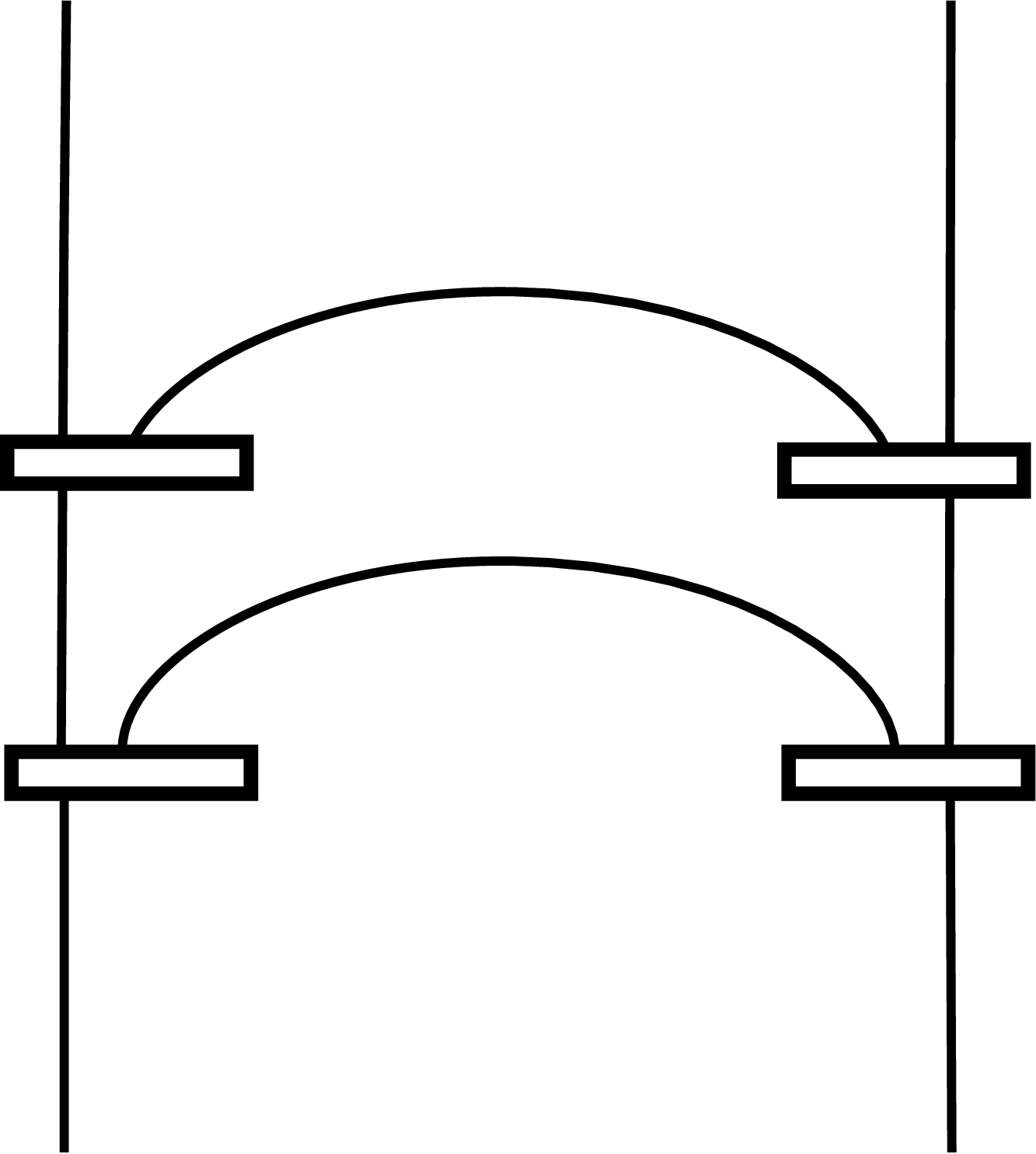}}
         \put(-68,+80){$m$}
          \put(-8,+80){$n$}
          \put(-68,-7){$m^{\prime}$}
          \put(-8,-7){$n^{\prime}$}
           \put(-47,+62){$k-2$}
           \put(-41,+42){$1$}
   \end{minipage}\\
   &+&\frac{\Delta _{m-1}\Delta _{n-1}}{\Delta _{m+k-2}\Delta _{n+k-2}}
   \begin{minipage}[h]{0.16\linewidth}
        \vspace{0pt}
        \scalebox{0.11}{\includegraphics{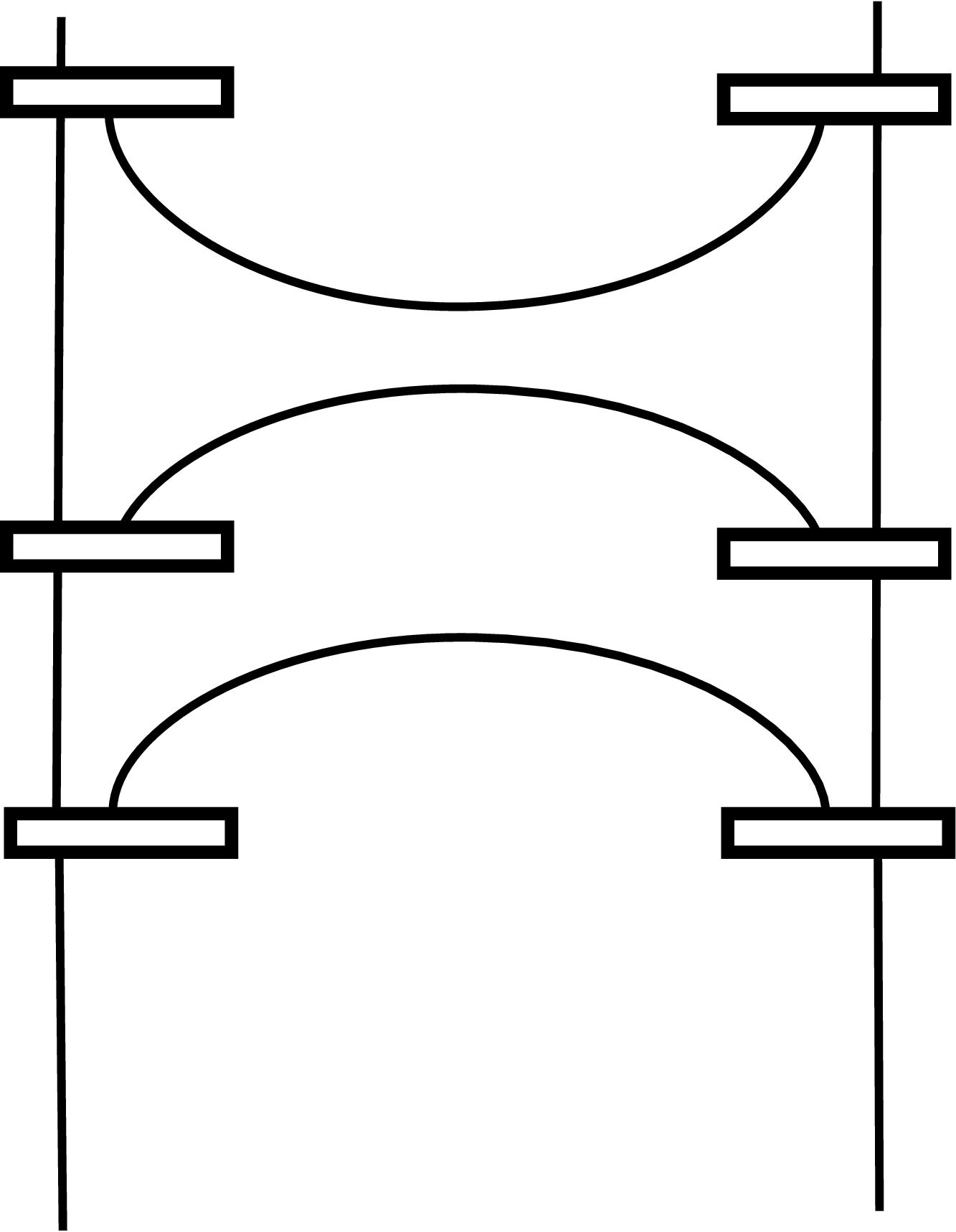}}
         \put(-68,+92){$m$}
          \put(-8,+92){$n$}
          \put(-68,-7){$m^{\prime}$}
          \put(-8,-7){$n^{\prime}$}
           \put(-40,+70){$1$}
         \put(-47,+52){$k-1$}
          \put(-41,+35){$1$}
   \end{minipage}\bigg).\\
   \end{eqnarray*}
   
   Collecting similar terms together, we obtain
   {\small
\begin{eqnarray*}
   \begin{minipage}[h]{0.15\linewidth}
        \vspace{0pt}
        \scalebox{0.11}{\includegraphics{second-lemma-main-bubble}}
         \put(-68,+80){$m$}
          \put(-8,+80){$n$}
          \put(-68,-7){$m^{\prime}$}
          \put(-8,-7){$n^{\prime}$}
           \put(-37,+67){$k$}
         \put(-37,+3){$1$}
   \end{minipage}
   =
     \frac{\Delta
_{n+k}\Delta _{m+k-1}-\Delta _{n}\Delta _{m-1}}{\Delta _{n+k-1}\Delta
_{m+k-1}}
  \begin{minipage}[h]{0.15\linewidth}
        \vspace{0pt}
        \scalebox{0.11}{\includegraphics{bubblefirstterm}}
        \put(-68,+80){$m$}
          \put(-8,+80){$n$}
          \put(-68,-7){$m^{\prime}$}
          \put(-8,-7){$n^{\prime}$}
           \put(-45,+67){$k-1$}
          \end{minipage}
   +
\frac{\Delta _{m-1}\Delta _{n-1}}{\Delta
_{n+k-1}\Delta _{m+k-1}}
  \begin{minipage}[h]{0.15\linewidth}
        \vspace{0pt}
        \scalebox{0.11}{\includegraphics{bubbl-expansion-diagram}}
        \put(-68,+80){$m$}
          \put(-8,+80){$n$}
          \put(-68,-7){$m^{\prime}$}
          \put(-8,-7){$n^{\prime}$}
           \put(-38,+50){$1$}
         \put(-38,28){$k$}
   \end{minipage}.
\end{eqnarray*}
  }

\end{proof}
\begin{remark}
Note that, while the symmetry with respect to the variables $m$ and $n$ in the function $\beta_{m,n}^k$ is clear, the rational function $\alpha_{m,n}^k$ appears to be asymmetric with respect to these variables. It is routine to verify that $\Delta
_{n+k}\Delta _{m+k-1}-\Delta _{n}\Delta _{m-1}=\Delta _{m+k}\Delta _{n+k-1}-\Delta _{m}\Delta _{n-1}$ and hence $\alpha_{m,n}^k$ is also symmetric with respect to the variables $m$ and $n$. This also could be seen to follow from the proof of Lemma \ref{lemma1}. To see this note that in Lemma \ref{lemma1} we obtain (\ref{11}) by expanding $f^{(m+k)}$ first and $f^{(n+k)}$ second. Further, the coefficients $\alpha_{m,n}^k$ and $\beta_{m,n}^k$ are a result of an iterative application of (\ref{11}). We could have done the expansion of the bubble in the opposite order, by expanding $f^{(n+k)}$ first and $f^{(m+k)}$ second, and obtain a version of the identity (\ref{11}) except $m$ and $n$ are swapped. An iteration of this identity would yield the same coefficient $\beta_{m,n}^k$ and forces $\alpha_{m,n}^k$ to be the same as $\alpha_{n,m}^k$.
\end{remark}
We will need the following identity. A proof can be found in \cite{kaufflinks} and \cite{Masbaum1}.
\begin{lemma}
$\Delta _{m+k}\Delta _{n+k-1}-\Delta _{m}\Delta _{n-1}=\Delta _{m+n+k}\Delta
_{k-1}$.
\end{lemma}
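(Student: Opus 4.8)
The plan is to convert everything into quantum integers and reduce to a purely algebraic identity that can be checked by expanding the defining formula for $\Delta_n$. Recall from the excerpt that $\Delta_n = (-1)^n[n+1]$, where, writing $q = A^2$, we put $[j] = \dfrac{q^{j} - q^{-j}}{q - q^{-1}}$. Substituting this into the claimed identity, every term acquires a sign: the left-hand side becomes $(-1)^{m+k}(-1)^{n+k-1}[m+k+1][n+k] - (-1)^{m}(-1)^{n-1}[m+1][n] = (-1)^{m+n-1}\bigl([m+k+1][n+k] - [m+1][n]\bigr)$, while the right-hand side becomes $(-1)^{m+n+k}(-1)^{k-1}[m+n+k+1][k] = (-1)^{m+n-1}[m+n+k+1][k]$. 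The sign prefactors agree, so it suffices to prove the quantum-integer identity
\[
[m+k+1][n+k] - [m+1][n] = [m+n+k+1][k].
\]

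Next I would verify this identity by direct expansion. Using $[a][b] = \dfrac{q^{a+b} + q^{-(a+b)} - q^{a-b} - q^{-(a-b)}}{(q-q^{-1})^2}$, one notes that the two products on the left have the \emph{same} cross terms $-q^{\pm(m-n+1)}$, since $(m+k+1)-(n+k) = m-n+1 = (m+1)-n$; hence these cancel in the difference and
\[
[m+k+1][n+k] - [m+1][n] = \frac{q^{m+n+2k+1} + q^{-(m+n+2k+1)} - q^{m+n+1} - q^{-(m+n+1)}}{(q-q^{-1})^2},
\]
which is exactly the expansion of $[m+n+k+1][k]$, since $(m+n+k+1)+k = m+n+2k+1$ and $(m+n+k+1)-k = m+n+1$. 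This proves the identity, and hence the lemma.

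As an alternative, one can argue by a short telescoping. From the elementary relation $[a][b] - [a-1][b-1] = [a+b-1]$ (itself a one-line expansion) one gets $[m+k+1][n+k] - [m+1][n] = \sum_{j=0}^{k-1}\bigl([m+j+2][n+j+1] - [m+j+1][n+j]\bigr) = \sum_{j=0}^{k-1}[m+n+2j+2]$, and from $[x][k] = \sum_{j=0}^{k-1}[x+k-1-2j]$ applied with $x = m+n+k+1$ one obtains the same sum; comparing the two gives the result.

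The main point to be careful about is the sign bookkeeping when passing between $\Delta_n$ and $[n+1]$; once that is handled the proof is a routine $q$-identity, which is why it is reasonable to merely cite \cite{kaufflinks} and \cite{Masbaum1}.
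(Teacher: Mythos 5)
Your proof is correct, but note that the paper does not actually prove this lemma at all: it simply cites \cite{kaufflinks} and \cite{Masbaum1}, so there is no internal argument to compare against. What you supply is a self-contained verification. The sign bookkeeping is right: with $\Delta_n=(-1)^n[n+1]$ both sides pick up the common factor $(-1)^{m+n-1}$ (the $k$-dependent signs cancel since they enter as $(-1)^{2k}$), reducing the lemma to $[m+k+1][n+k]-[m+1][n]=[m+n+k+1][k]$, and your expansion via $[a][b]=\bigl(q^{a+b}+q^{-(a+b)}-q^{a-b}-q^{-(a-b)}\bigr)/(q-q^{-1})^2$ does close: the cross terms agree because $(m+k+1)-(n+k)=(m+1)-n$, and the surviving terms are exactly the expansion of $[m+n+k+1][k]$. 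The telescoping alternative is also sound, since $[a][b]-[a-1][b-1]=[a+b-1]$ and $[x][k]=\sum_{j=0}^{k-1}[x+k-1-2j]$ produce the same multiset of quantum integers $[m+n+2],[m+n+4],\dots,[m+n+2k]$; as a bonus it makes the identity transparent as a statement about sums of quantum integers, which is essentially how it is derived in the cited references (where it appears in the recoupling-theory toolkit). Either route is acceptable; the direct expansion is the shorter one to include if the author wished to make the paper self-contained, and it has the minor advantage of holding verbatim for all integer values of the parameters, with the degenerate cases (e.g.\ $k=0$, where $\Delta_{k-1}=0$) handled automatically by $[0]=0$.
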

Note that the previous identity can be used to obtain a more manageable formula for  $\alpha_{m,n}^k$.

\begin{lemma}
\label{lemma2}
Let $m,n,m^{\prime},n^{\prime} \geq 0$; $k,l \geq 1$ and $k\geq l$. Then we have

\begin{eqnarray}
\label{main lemma}
  \begin{minipage}[h]{0.15\linewidth}
        \vspace{0pt}
        \scalebox{0.11}{\includegraphics{second-lemma-main-bubble}}
         \put(-68,+80){$m$}
          \put(-8,+80){$n$}
          \put(-68,-7){$m^{\prime}$}
          \put(-8,-7){$n^{\prime}$}
           \put(-37,+67){$k$}
         \put(-37,+3){$l$}
   \end{minipage}
   =
     \alpha_{m,n}^k 
  \begin{minipage}[h]{0.15\linewidth}
        \vspace{0pt}
        \scalebox{0.11}{\includegraphics{second-lemma-main-bubble}}
         \put(-68,+80){$m$}
          \put(-8,+80){$n$}
          \put(-68,-7){$m^{\prime}$}
          \put(-8,-7){$n^{\prime}$}
           \put(-45,+67){$k-1$}
         \put(-45,+3){$l-1$}
   \end{minipage}
  +
  \beta_{m,n}^k 
  \begin{minipage}[h]{0.15\linewidth}
        \vspace{0pt}
        \scalebox{0.11}{\includegraphics{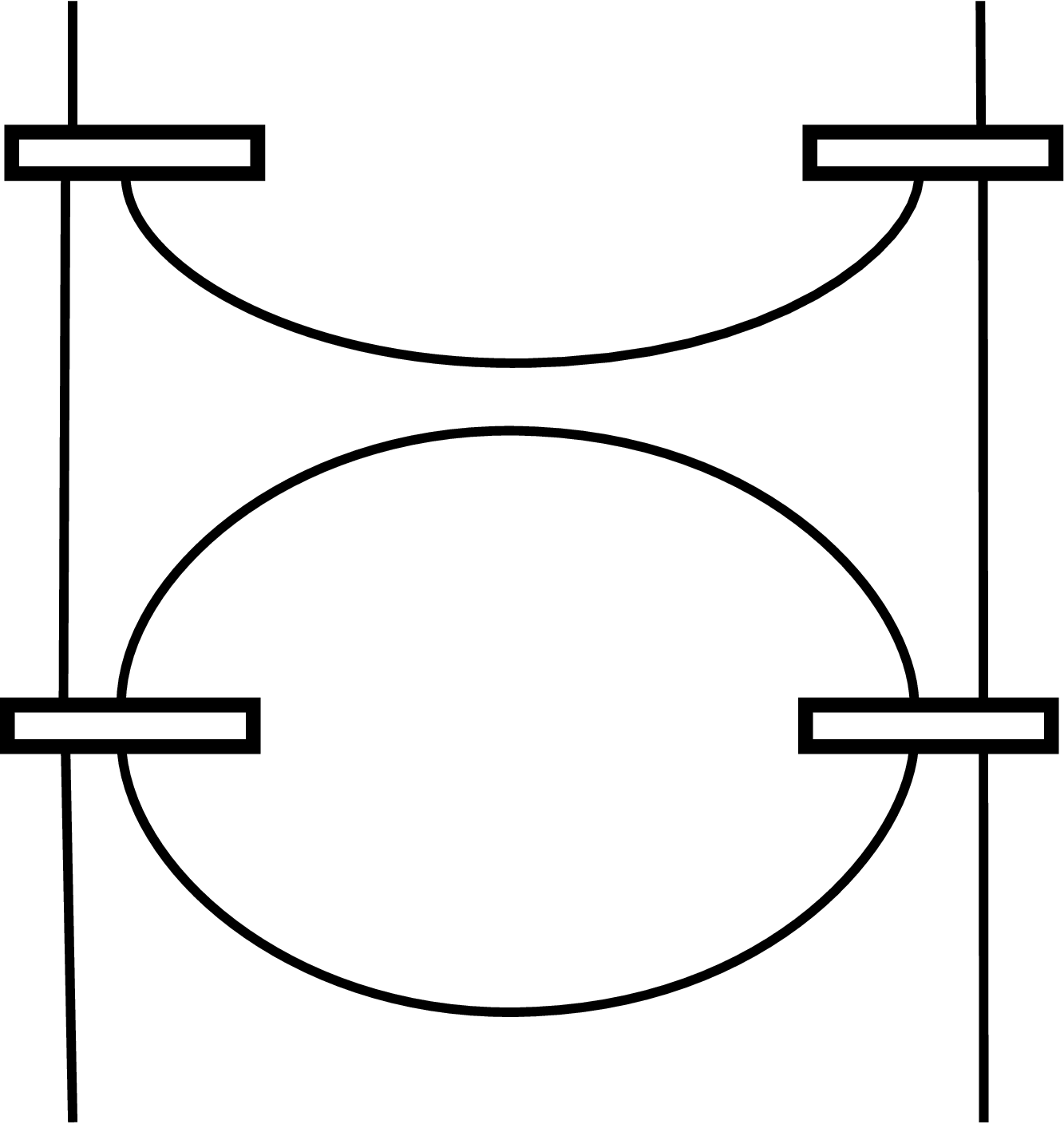}}
        \put(-68,+80){$m$}
          \put(-8,+80){$n$}
          \put(-68,-7){$m^{\prime}$}
          \put(-8,-7){$n^{\prime}$}
           \put(-40,+35){$k$}
           \put(-40,+55){$1$}
         \put(-45,-3){$l-1$}
   \end{minipage}.
\end{eqnarray}

\end{lemma}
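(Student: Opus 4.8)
The plan is to deduce Lemma \ref{lemma2} directly from Lemma \ref{lemma1}, by peeling a single strand off the bottom $l$-coloured arc of the bubble and then re-assembling. Throughout, recall that the geometry of Figure \ref{bubble} forces $m+k=m^{\prime}+l$ and $n+k=n^{\prime}+l$, exactly as in the proof of Lemma \ref{lemma1}, so the bubble has an edge coloured $m+k$ on its left side and an edge coloured $n+k$ on its right side.

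First I would apply the recursive definition (\ref{recursive}) of the Jones--Wenzl idempotent to the projector $f^{(l)}$ sitting on the bottom arc of $\mathcal{B}_{m^{\prime},n^{\prime}}^{m,n}(k,l)$, writing $f^{(l)}=f^{(l-1)}\otimes\mathrm{id}_{1}$ minus the turn-back term weighted by $\Delta_{l-2}/\Delta_{l-1}$ (when $l\geq 2$; for $l=1$ there is nothing to expand). I claim the turn-back term vanishes: the cup--cap it introduces joins two adjacent strands of the bottom arc, and these are two adjacent strands of the projector $f^{(m+k)}$ on the left side of the bubble; after absorbing the adjacent $f^{(l-1)}$ into $f^{(m+k)}$ one is left with $f^{(m+k)}$ capped on two adjacent strands, which is zero by (\ref{properties}). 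Hence $\mathcal{B}_{m^{\prime},n^{\prime}}^{m,n}(k,l)$ equals the single surviving diagram, the same picture with $f^{(l)}$ replaced by $f^{(l-1)}\otimes\mathrm{id}_{1}$.

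In that surviving diagram the last strand of the bottom arc carries no idempotent, so I can re-route it. Using the coherence of the (turn-back free) trivalent vertices together with the absorption property of the Jones--Wenzl idempotents, split the $(m+k)$-edge first into an $(m+k-1)$-bundle and that single strand, and only afterwards split the $(m+k-1)$-bundle into the $m^{\prime}$-cluster and an $(l-1)$-bundle; perform the mirror-image surgery on the right, splitting $f^{(n+k)}$ through an $(n+k-1)$-bundle. After this re-drawing, the sub-diagram consisting of the $m$- and $n$-clusters, the $k$-arc, the two top trivalent vertices and the single bottom strand is exactly the bubble element $\mathcal{B}_{m+k-1,\,n+k-1}^{m,n}(k,1)$, while what remains (the $m^{\prime}$- and $n^{\prime}$-clusters joined by an $(l-1)$-arc, hanging off the legs coloured $m+k-1$ and $n+k-1$) is a fixed ambient skein. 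Now apply Lemma \ref{lemma1} to $\mathcal{B}_{m+k-1,\,n+k-1}^{m,n}(k,1)$; since $\alpha_{m,n}^{k}$ and $\beta_{m,n}^{k}$ depend only on $m$, $n$, $k$, precisely these two coefficients occur. Re-attaching the ambient skein to the two resulting terms: the first term converts the $(k-1,0)$-bubble into $\mathcal{B}_{m^{\prime},n^{\prime}}^{m,n}(k-1,l-1)$ (the leg coloured $m+k-1$ runs straight through and picks up the $(l-1)$-arc, which is the bottom arc of this smaller bubble since $m+k-1=m^{\prime}+(l-1)$), and the second term becomes precisely the second diagram on the right-hand side of (\ref{main lemma}). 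This gives the asserted identity.

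The step I expect to be the main obstacle is the re-drawing in the previous paragraph: one must check carefully that regrouping the non-crossing trivalent vertices on the two sides of the bubble, inserting and absorbing the intermediate projectors $f^{(m+k-1)}$ and $f^{(l-1)}$, yields an honest equality of skein elements, and that the two re-assembled pictures agree on the nose with the diagrams drawn in the statement. Everything else is either a one-line application of Lemma \ref{lemma1} or an immediate consequence of the basic identities collected in (\ref{properties}).
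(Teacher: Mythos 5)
Your argument is correct, but it reaches \eqref{main lemma} by a genuinely different route than the paper. The paper redoes the idempotent expansion: it applies the Wenzl recursion (\ref{recursive}) to $f^{(m+k)}$, removes the resulting loop with (\ref{properties2}), expands $f^{(n+k)}$, and only then invokes Lemma \ref{lemma1} on the inner $(k-1,1)$-bubble before collecting terms; it also disposes of the degenerate cases $\min(m,n)=0$ or $\min(m^{\prime},n^{\prime})=0$ separately. You instead deduce the general $(k,l)$ recursion formally from the $l=1$ case by locality: peel one strand off the bottom $l$-arc, insert (and absorb) $f^{(m+k-1)}$ and $f^{(n+k-1)}$ on the remaining legs of the two internal projectors, recognize the result as $\mathcal{B}^{m,n}_{m+k-1,n+k-1}(k,1)$ glued to a fixed ambient skein, and apply Lemma \ref{lemma1} exactly once; since $\alpha^{k}_{m,n},\beta^{k}_{m,n}$ depend only on $m,n,k$ and $m+k-1=m^{\prime}+(l-1)$, re-gluing produces precisely $\mathcal{B}^{m,n}_{m^{\prime},n^{\prime}}(k-1,l-1)$ and the $1$-bridge over $\mathcal{B}^{m-1,n-1}_{m^{\prime},n^{\prime}}(k,l-1)$, i.e.\ the two diagrams of \eqref{main lemma}. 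This buys a shorter proof with no new projector expansion (your preliminary expansion of $f^{(l)}$ is in fact superfluous, since any idempotent placed on the arc is absorbed by $f^{(m+k)}$ anyway, though your vanishing argument for the turn-back term via (\ref{properties}) is sound), and it treats the degenerate cases uniformly because Lemma \ref{lemma1} already allows $m,n,m^{\prime},n^{\prime}\geq 0$. The one point you should make explicit is the one you flag yourself: the peeled strand must be the innermost strand of the bottom arc, so that on each internal projector the remaining $m+k-1$ (resp.\ $n+k-1$) legs are consecutive at one end and the intermediate idempotents can be inserted by absorption; with that choice the regrouping is an honest planar identity and the substitution of Lemma \ref{lemma1} inside the larger diagram is legitimate. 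The paper's computational route is more self-contained at the level of the pictures it draws, while yours makes the logical dependence on Lemma \ref{lemma1} cleaner.
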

\begin{proof}
We start again by considering the trivial cases when one of the integers $m,n,m^{\prime},n^{\prime}$ is zero. Note that, as in Lemma \ref{lemma1}, $m\leq m^{\prime}$ and $n\leq n^{\prime}$. If $m^{\prime}=n^{\prime}=0$, then $\mathcal{B}_{0,0}^{0,0}(k,k)=\alpha_{0,0}^k \Delta _{k-1} =\Delta _{k}$. If $\min(m,n)=0$ or $\min(m^{\prime},n^{\prime})=0$, then the result follows from (\ref{properties2}).
Now suppose that $\min(m,n)\neq 0$ and apply the recursive definition of the Jones-Wenzel idempotent on the projector $f^{(m+k)}$ that appears in $\mathcal{B}_{m^{\prime},n^{\prime}}^{m,n}(k,l)$
\begin{eqnarray}
\label{33}
  \begin{minipage}[h]{0.15\linewidth}
        \vspace{0pt}
        \scalebox{0.11}{\includegraphics{second-lemma-main-bubble}}
         \put(-68,+80){$m$}
          \put(-8,+80){$n$}
          \put(-68,-7){$m^{\prime}$}
          \put(-8,-7){$n^{\prime}$}
           \put(-37,+67){$k$}
         \put(-37,+3){$l$}
   \end{minipage}
   =
    \begin{minipage}[h]{0.15\linewidth}
        \vspace{0pt}
        \scalebox{0.11}{\includegraphics{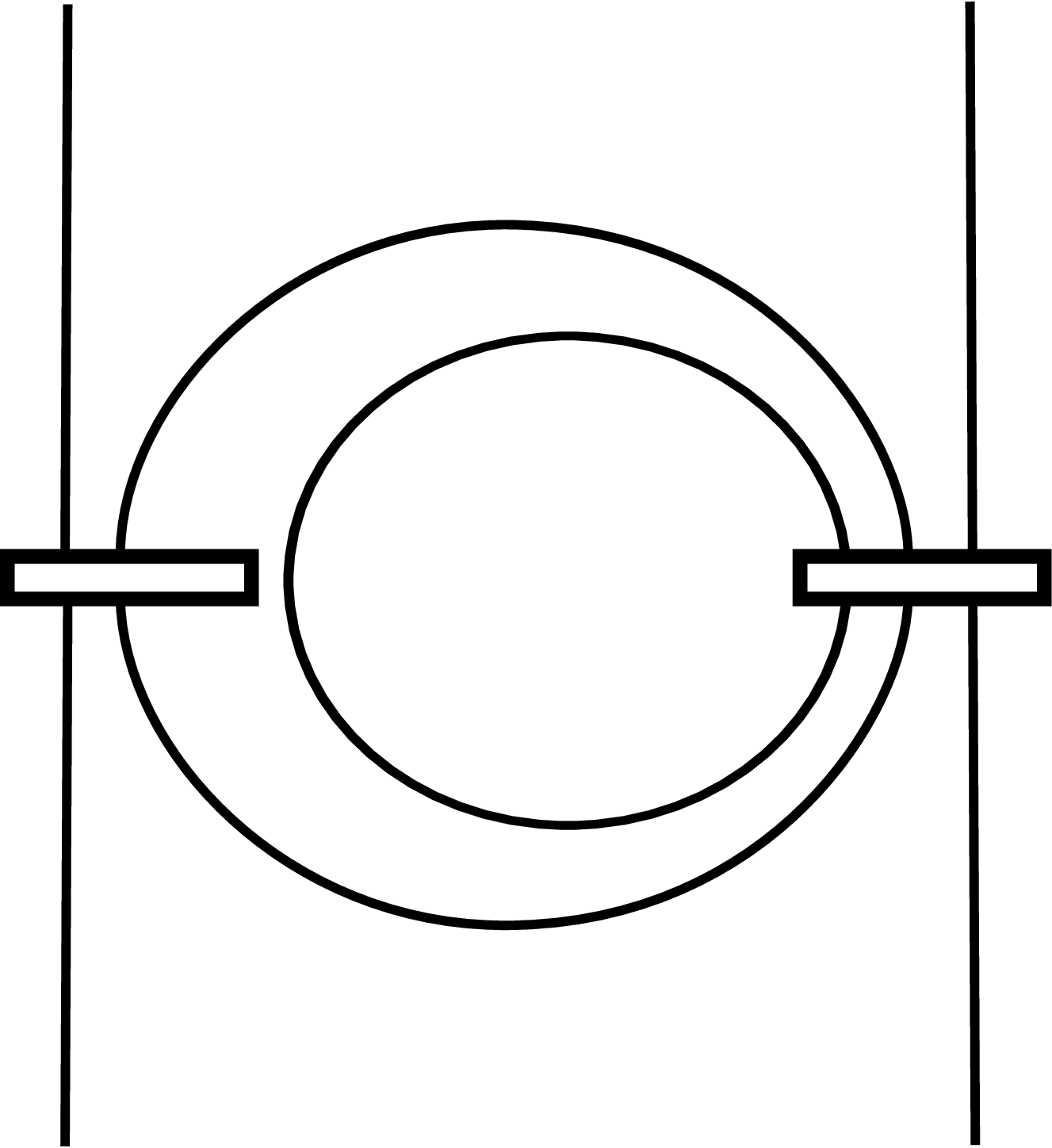}}
         \put(-68,+80){$m$}
          \put(-8,+80){$n$}
          \put(-68,-7){$m^{\prime}$}
          \put(-8,-7){$n^{\prime}$}
           \put(-45,+67){$k-1$}
         \put(-45,+3){$l-1$}
         \put(-43,+42){$1$}
   \end{minipage}
   -
  \frac{\Delta _{m+k-2}}{\Delta _{m+k-1}}
  \begin{minipage}[h]{0.15\linewidth}
        \vspace{0pt}
        \scalebox{0.11}{\includegraphics{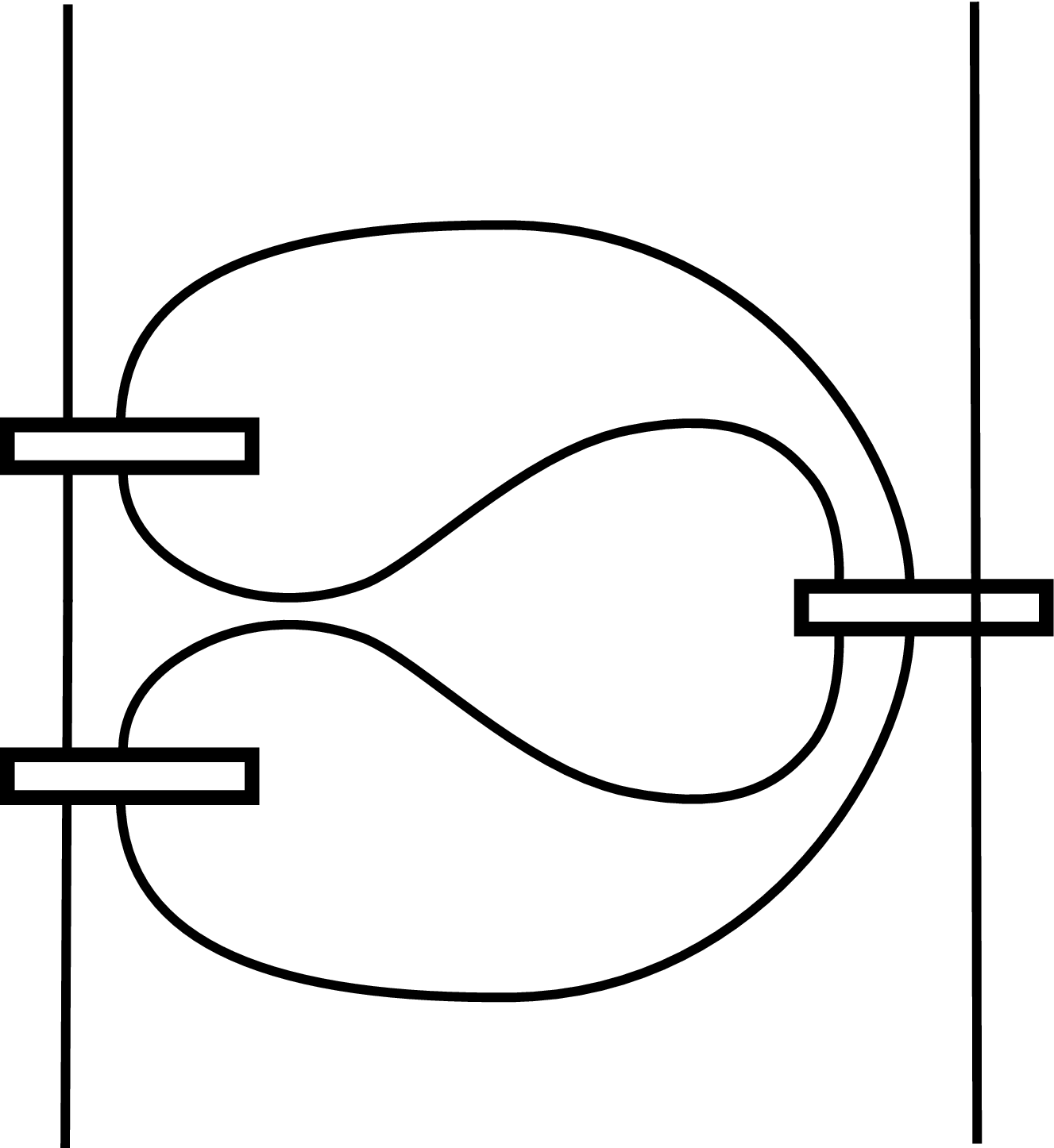}}
         \put(-68,+80){$m$}
          \put(-8,+80){$n$}
          \put(-68,-7){$m^{\prime}$}
          \put(-8,-7){$n^{\prime}$}
           \put(-46,+20){$1$}
            \put(-46,+45){$1$}
           \put(-45,+67){$k-1$}
         \put(-45,+0){$l-1$}
   \end{minipage}.
   \end{eqnarray}
Removing the loop that appears in the first term and expanding the projector $f^{(n+k)}$ in the second term of (\ref{33}) we obtain 
{\small
\begin{eqnarray}
\label{44}
  \begin{minipage}[h]{0.15\linewidth}
        \vspace{0pt}
        \scalebox{0.11}{\includegraphics{second-lemma-main-bubble}}
          \put(-68,+80){$m$}
          \put(-8,+80){$n$}
          \put(-68,-7){$m^{\prime}$}
          \put(-8,-7){$n^{\prime}$}
           \put(-37,+67){$k$}
         \put(-37,+3){$l$}
   \end{minipage}
   =
    \frac{\Delta _{n+k}\Delta _{m+k-1}-\Delta _{n+k-1}\Delta _{m+k-2}}{\Delta _{n+k-1}\Delta _{m+k-1}}
  \begin{minipage}[h]{0.15\linewidth}
        \vspace{0pt}
        \scalebox{0.11}{\includegraphics{second-lemma-main-bubble}}
     \put(-68,+80){$m$}
          \put(-8,+80){$n$}
          \put(-68,-7){$m^{\prime}$}
          \put(-8,-7){$n^{\prime}$}
           \put(-45,+67){$k-1$}
         \put(-45,+3){$l-1$}
          \end{minipage}
  +
  \frac{\Delta _{m+k-2}\Delta _{n+k-2}}{\Delta _{n+k-1}\Delta _{m+k-1}}
  \begin{minipage}[h]{0.15\linewidth}
        \vspace{0pt}
        \scalebox{0.11}{\includegraphics{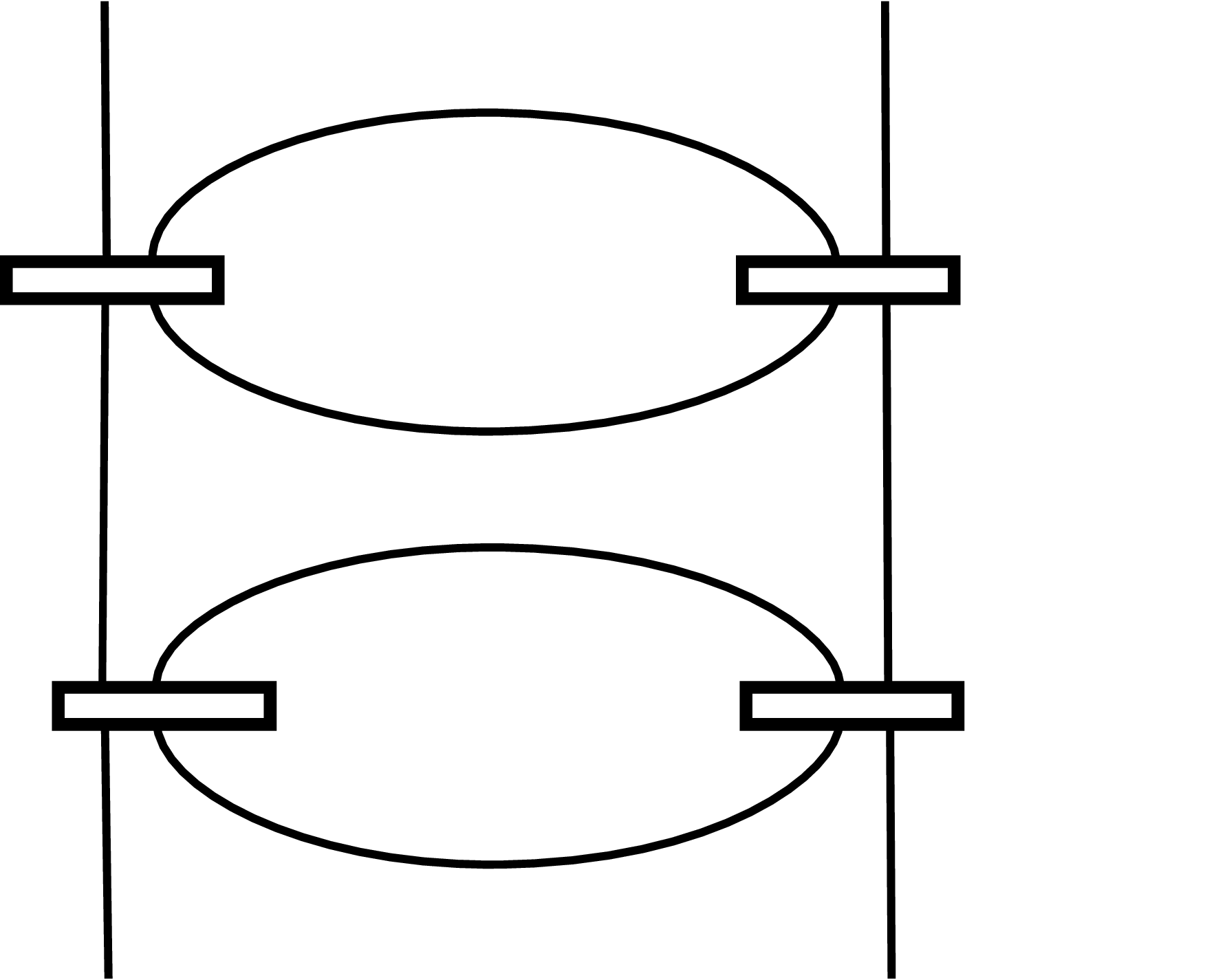}}
         \put(-90,+80){$m$}
          \put(-28,+80){$n$}
          \put(-90,-7){$m^{\prime}$}
          \put(-28,-7){$n^{\prime}$}
           \put(-64,+68){$k-1$}
         \put(-64,0){$l-1$}
          \put(-57,+25){$1$}  
           \put(-57,+44){$1$}
   \end{minipage}.
   \end{eqnarray}
}
If $l=1$ then the result follows from Lemma \ref{lemma1}. Otherwise we apply Lemma \ref{lemma1} to the skein element $\mathcal{B}_{m^{\prime},n^{\prime}}^{m,n}(k-1,1)$ appearing in the second term of (\ref{44}) to obtain

{\footnotesize
\begin{eqnarray*}
  \begin{minipage}[h]{0.15\linewidth}
        \vspace{0pt}
        \scalebox{0.11}{\includegraphics{second-lemma-main-bubble}}
         \put(-68,+80){$m$}
          \put(-8,+80){$n$}
          \put(-68,-7){$m^{\prime}$}
          \put(-8,-7){$n^{\prime}$}
           \put(-37,+67){$k$}
         \put(-37,+3){$l$}
   \end{minipage}
   &=&
     \frac{\Delta _{n+k}\Delta _{m+k-1}-\Delta _{n+k-1}\Delta _{m+k-2}}{\Delta _{n+k-1}\Delta _{m+k-1}}
  \begin{minipage}[h]{0.15\linewidth}
        \vspace{0pt}
        \scalebox{0.11}{\includegraphics{second-lemma-main-bubble}}
         \put(-68,+80){$m$}
          \put(-8,+80){$n$}
          \put(-68,-7){$m^{\prime}$}
          \put(-8,-7){$n^{\prime}$}
           \put(-45,+67){$k-1$}
         \put(-45,+3){$l-1$}
   \end{minipage}\\
   &+&
  \frac{\Delta _{m+k-2}\Delta _{n+k-2}}{\Delta _{n+k-1}\Delta _{m+k-1}}
  \bigg( \frac{\Delta _{n+k-1}\Delta _{m+k-2}-\Delta _{n}\Delta _{m-1}}{\Delta _{n+k-2}\Delta _{m+k-2}}
  \begin{minipage}[h]{0.15\linewidth}
        \vspace{0pt}
        \scalebox{0.11}{\includegraphics{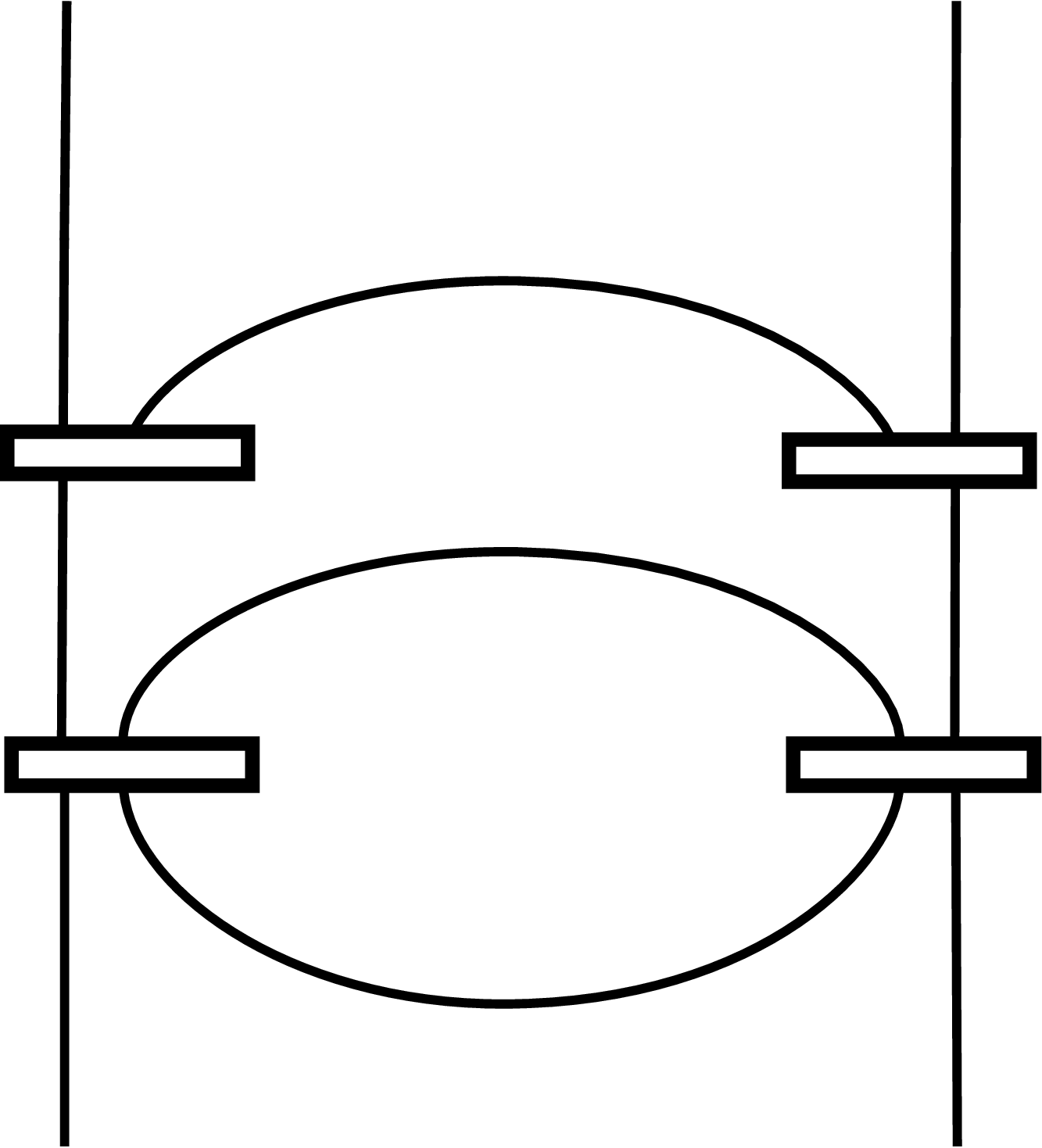}}
         \put(-68,+80){$m$}
          \put(-8,+80){$n$}
          \put(-68,-7){$m^{\prime}$}
          \put(-8,-7){$n^{\prime}$}
           \put(-47,+62){$k-2$}
           \put(-41,+42){$1$}
         \put(-44,1){$l-1$}
   \end{minipage}\\
   &+&\frac{\Delta _{m-1}\Delta _{n-1}}{\Delta _{n+k-2}\Delta _{m+k-2}}
   \begin{minipage}[h]{0.16\linewidth}
        \vspace{0pt}
        \scalebox{0.11}{\includegraphics{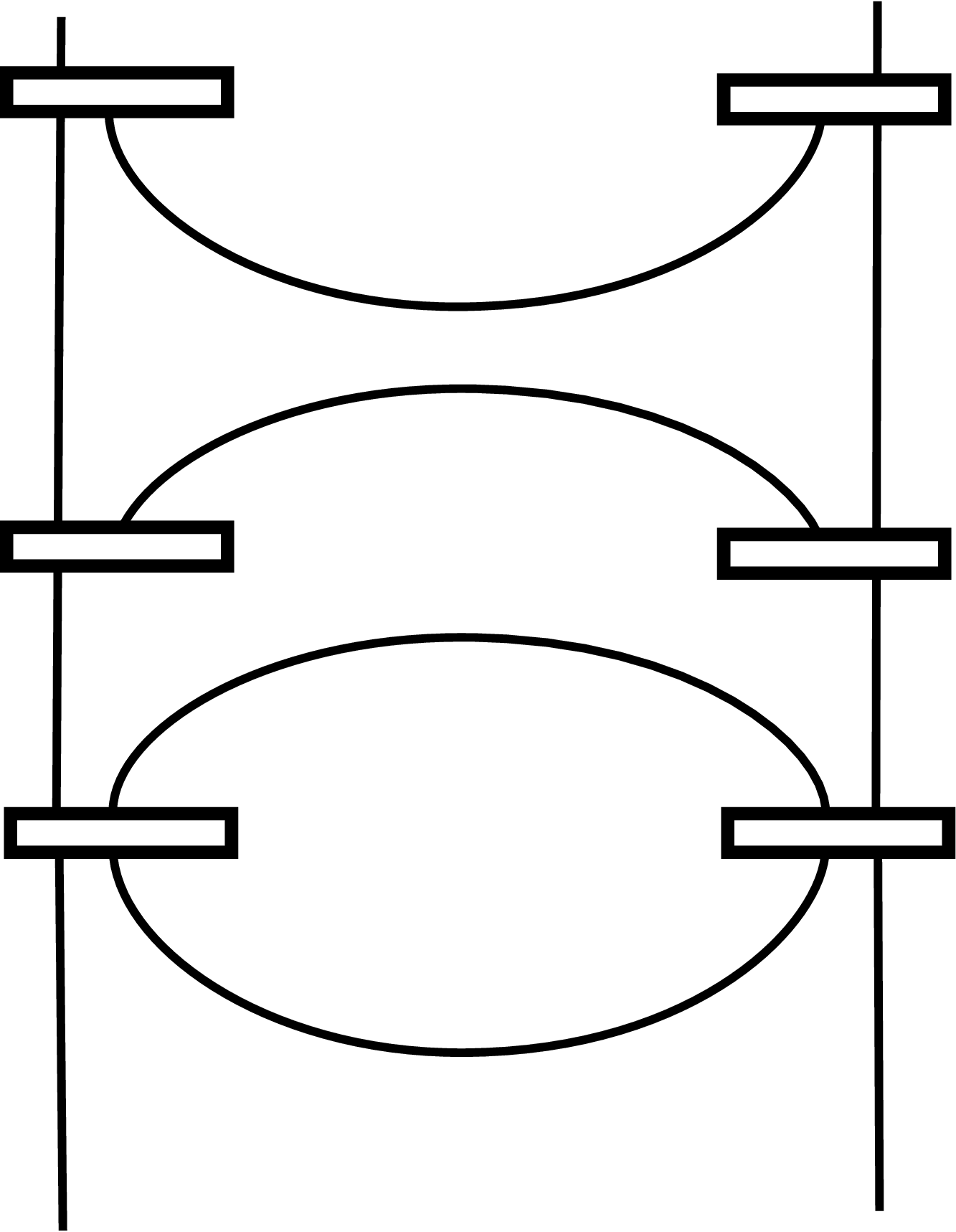}}
         \put(-68,+92){$m$}
          \put(-8,+92){$n$}
          \put(-68,-7){$m^{\prime}$}
          \put(-8,-7){$n^{\prime}$}
           \put(-40,+70){$1$}
         \put(-45,+3){$l-1$}
         \put(-47,+52){$k-1$}
          \put(-41,+35){$1$}
   \end{minipage}\bigg)\\
   &=&\frac{\Delta
_{n+k}\Delta _{m+k-1}-\Delta _{n}\Delta _{m-1}}{\Delta _{n+k-1}\Delta
_{m+k-1}}
   \begin{minipage}[h]{0.15\linewidth}
        \vspace{0pt}
        \scalebox{0.11}{\includegraphics{second-lemma-main-bubble}}
         \put(-68,+80){$m$}
          \put(-8,+80){$n$}
          \put(-68,-7){$m^{\prime}$}
          \put(-8,-7){$n^{\prime}$}
           \put(-45,+67){$k-1$}
         \put(-45,+3){$l-1$}
   \end{minipage}+\frac{\Delta _{m-1}\Delta _{n-1}}{\Delta
_{n+k-1}\Delta _{m+k-1}}
   \begin{minipage}[h]{0.15\linewidth}
        \vspace{0pt}
        \scalebox{0.11}{\includegraphics{second-lemma-main-bubble_secondterm.eps}}
        \put(-68,+80){$m$}
          \put(-8,+80){$n$}
          \put(-68,-7){$m^{\prime}$}
          \put(-8,-7){$n^{\prime}$}
           \put(-40,+35){$k$}
           \put(-40,+55){$1$}
         \put(-45,-3){$l-1$}
   \end{minipage}.
   \end{eqnarray*}
   }
  \end{proof}
\begin{remark}
\label{remark1}
Let $m,n,m^{\prime},n^{\prime} \geq 0$. Without loss of generality assume that $m+n^{\prime} \geq n+m^{\prime}$.
In the space in $\mathscr{D}^{m,n}_{m^{\prime},n^{\prime}}$, there is a one-to-one correspondence between the two set of element shown in Figure \ref{cor}. The correspondence is shown also in the same Figure and it can be parameterized by an integer $i$. Note that the diagonal line in the Figure represents $\frac{1}{2} (m+n^{\prime}-n-m^{\prime})$ parallel lines. The skein elements on the right-hand side of Figure \ref{cor} form a basis for the space $\mathscr{D}^{m,n}_{m^{\prime},n^{\prime}}$. For a proof of this fact see Lemma $14.9$ in \cite{Lickorish1}. On the other hand the skein elements of the left-hand side spans the space $\mathscr{D}^{m,n}_{m^{\prime},n^{\prime}}$. See also the proof of Lemma $14.9$ in \cite{Lickorish1}. One concludes that, by the correspondence shown in Figure \ref{cor} the skein elements on the left-hand side of Figure \ref{cor} must be linearly independent and hence they form a basis for the space $\mathscr{D}^{m,n}_{m^{\prime},n^{\prime}}$.
\begin{figure}[h]
  \centering
   {\includegraphics[scale=0.11]{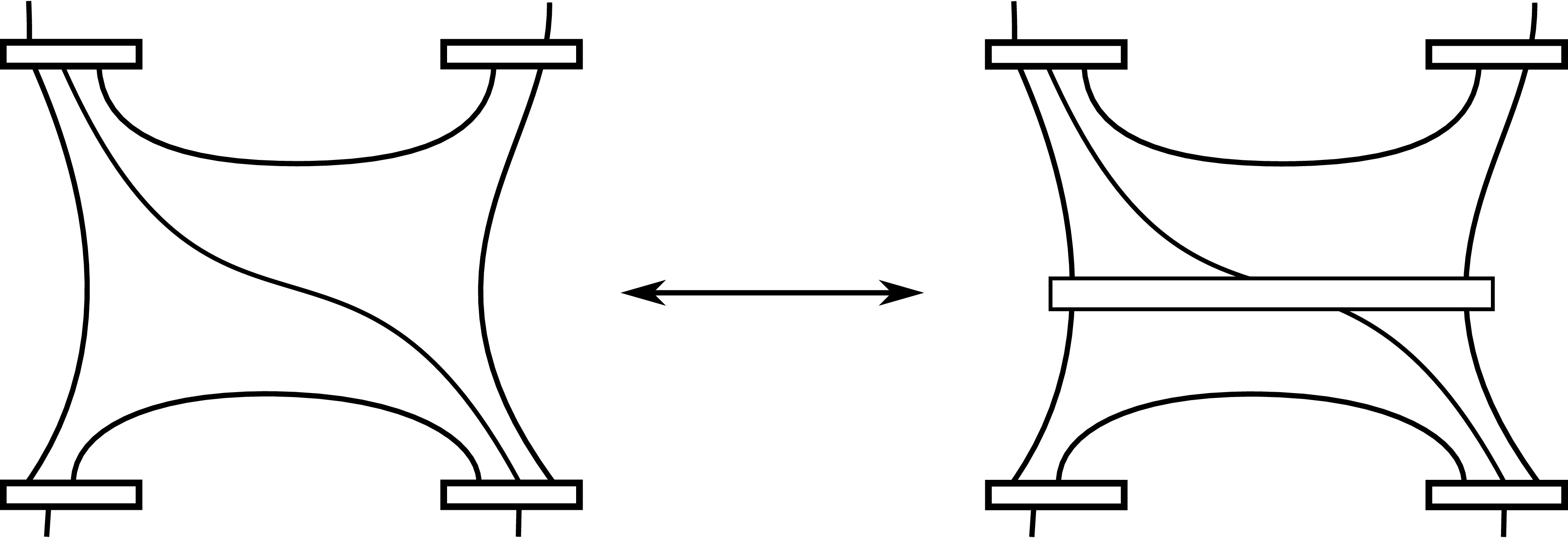}
   		\put(-68,+70){$m$}
          \put(-8,+70){$n$}
          \put(-68,-7){$m^{\prime}$}
          \put(-8,-7){$n^{\prime}$}
           \put(-37,+50){$i$}
           \put(-188,+70){$m$}
          \put(-128,+70){$n$}
          \put(-188,-7){$m^{\prime}$}
          \put(-128,-7){$n^{\prime}$}
           \put(-157,+50){$i$}
           \caption{Correspondence between two bases in the module $\mathscr{D}^{m,n}_{m^{\prime},n^{\prime}}$. }
  \label{cor}}
\end{figure}
\end{remark}
\begin{remark}
\label{remark2}
We are interested in coefficients the bubble skein element $\mathcal{B}_{m^{\prime},n^{\prime}}^{m,n}(k,l)$ in terms of the basis shown on the left-hand side in Figure \ref{cor}. Since $m+k=m^{\prime}+l$ and $n+k=n^{\prime}+l$ then we must have $m+n^{\prime}=n+m^{\prime}$ and hence the diagonal line in the basis shown on the left-hand side in Figure \ref{cor} will not appear when we expand the element $\mathcal{B}_{m^{\prime},n^{\prime}}^{m,n}(k,l)$ in terms of this basis. 
\end{remark}
\section{The bubble expansion formula}
\label{section4}
In this section we will use the recursive formula we obtained in Lemma \ref{lemma2} for $\mathcal{B}_{m^{\prime},n^{\prime}}^{m,n}(k,l)$ to expand this element as a $\mathbb{Q}(A)$-linear sum of certain linearly independent skein elements in $\mathscr{D}^{m,n}_{m^{\prime},n^{\prime}}$. Then we will use Theorem \ref{main} together with Lemma \ref{lemma2} to determine a recursive formula for the coefficients of $\mathcal{B}_{m^{\prime},n^{\prime}}^{m,n}(k,l)$ in terms of these linearly independent elements. Finally, the recursive formula will be used to determine a closed form of these coefficients.

\begin{theorem}
\label{main}
Let $m,n,m^{\prime},n^{\prime} \geq 0$; $k,l \geq 1$. Then\\
(1) For $k\geq l$:{\small
\begin{eqnarray}
\label{bubble expansion formula1}
  \begin{minipage}[h]{0.15\linewidth}
        \vspace{0pt}
        \scalebox{0.11}{\includegraphics{second-lemma-main-bubble}}
        \put(-68,+80){$m$}
          \put(-8,+80){$n$}
          \put(-68,-7){$m^{\prime}$}
          \put(-8,-7){$n^{\prime}$}
           \put(-37,+67){$k$}
         \put(-37,+3){$l$}
   \end{minipage}
   =\displaystyle\sum\limits_{i=0}^{\min(m,n,l)}
   \left\lceil 
\begin{array}{cc}
m & n \\ 
k & l%
\end{array}%
\right\rceil _{i}
    \begin{minipage}[h]{0.15\linewidth}
        \vspace{0pt}
        \scalebox{0.11}{\includegraphics{bubbl-expansion-diagram}}
        \put(-68,+80){$m$}
          \put(-8,+80){$n$}
          \put(-68,-7){$m^{\prime}$}
          \put(-8,-7){$n^{\prime}$}
           \put(-37,+50){$i$}
         \put(-52,28){$k-l+i$}
   \end{minipage}.
  \end{eqnarray}
}
(2) For $l\geq k$:{\small
\begin{eqnarray}
\label{bubble expansion formula2}
  \begin{minipage}[h]{0.15\linewidth}
        \vspace{0pt}
        \scalebox{0.11}{\includegraphics{second-lemma-main-bubble}}
        \put(-68,+80){$m$}
          \put(-8,+80){$n$}
          \put(-68,-7){$m^{\prime}$}
          \put(-8,-7){$n^{\prime}$}
           \put(-37,+67){$k$}
         \put(-37,+3){$l$}
   \end{minipage}
   =\displaystyle\sum\limits_{i=0}^{\min(m^{\prime},n^{\prime},k)}
\left\lceil 
\begin{array}{cc}
n^{\prime} & m^{\prime} \\ 
k & l%
\end{array}%
\right\rceil _{i}        
  \begin{minipage}[h]{0.15\linewidth}
        \vspace{0pt}
        \scalebox{0.11}{\includegraphics{bubbl-expansion-diagram}}
        \put(-68,+80){$m$}
          \put(-8,+80){$n$}
          \put(-68,-7){$m^{\prime}$}
          \put(-8,-7){$n^{\prime}$}
           \put(-52,+35){$l-k+i$}
         \put(-37,12){$i$}
   \end{minipage}.
  \end{eqnarray}
}
where $\left\lceil 
\begin{array}{cc}
m & n \\ 
k & l%
\end{array}%
\right\rceil _{i} :=\left\lceil 
\begin{array}{cc}
m & n \\ 
k & l%
\end{array}%
\right\rceil _{i} (A)$ and $\left\lceil 
\begin{array}{cc}
n^{\prime} & m^{\prime} \\ 
k & l%
\end{array}%
\right\rceil _{i} :=\left\lceil 
\begin{array}{cc}
n^{\prime} & m^{\prime} \\ 
k & l%
\end{array}%
\right\rceil _{i} (A)$ are rational functions.
\end{theorem}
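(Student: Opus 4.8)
The plan is to prove part~(1) by induction on $l$, with Lemma~\ref{lemma1} as the base case and Lemma~\ref{lemma2} as the inductive step, and then to deduce part~(2) from part~(1) by the top--bottom reflection of the disk. Throughout, write $G_i$ for the right--hand diagram in (\ref{bubble expansion formula1}), i.e. the element of $\mathscr{D}^{m,n}_{m',n'}$ whose two internal strands are coloured $i$ and $k-l+i$; since $k\geq l$ we have $k-l+i\geq 0$, and for $i$ outside $[0,\min(m,n,l)]$ the trivalent admissibility conditions (equivalently, our convention that a diagram with a negatively coloured strand vanishes) force $G_i=0$, which is exactly why the sum is truncated at $\min(m,n,l)$.

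For part~(1), the base case $l=1$ is Lemma~\ref{lemma1}, which reads $\mathcal{B}_{m',n'}^{m,n}(k,1)=\alpha_{m,n}^{k}\,G_0+\beta_{m,n}^{k}\,G_1$; note that when $\min(m,n)=0$ we have $\beta_{m,n}^{k}=\Delta_{m-1}\Delta_{n-1}/(\Delta_{n+k-1}\Delta_{m+k-1})=0$, so the $G_1$ term disappears and the support agrees with $\min(m,n,1)$. For the inductive step, assume (\ref{bubble expansion formula1}) for the parameter value $l-1$, for all admissible $k\geq l-1$ and all boundary colours, and apply Lemma~\ref{lemma2}:
\begin{equation*}
\mathcal{B}_{m',n'}^{m,n}(k,l)=\alpha_{m,n}^{k}\,\mathcal{B}_{m',n'}^{m,n}(k-1,l-1)+\beta_{m,n}^{k}\,D,
\end{equation*}
where $D$ is the second diagram in (\ref{main lemma}). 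For the first term, $k-1\geq l-1$, so the inductive hypothesis expresses $\mathcal{B}_{m',n'}^{m,n}(k-1,l-1)$ as a $\mathbb{Q}(A)$--combination of the diagrams with internal colours $i$ and $(k-1)-(l-1)+i=k-l+i$, i.e. of $G_0,\dots,G_{\min(m,n,l-1)}$. For the second term, $D$ is built from a bubble whose second parameter is $\leq l-1$, so the inductive hypothesis (together with the colour-$1$ strand being absorbed into a trivalent vertex) applies to it; a short diagrammatic computation then rewrites $D$ as a $\mathbb{Q}(A)$--combination of the diagrams with internal colours $j+1$ and $k-l+(j+1)$ for $0\leq j\leq\min(m,n,l)-1$, that is, of $G_1,\dots,G_{\min(m,n,l)}$. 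Summing the two contributions presents $\mathcal{B}_{m',n'}^{m,n}(k,l)$ as a $\mathbb{Q}(A)$--combination of $G_0,\dots,G_{\min(m,n,l)}$, which is (\ref{bubble expansion formula1}); the coefficients are polynomials in the $\alpha_{m,n}^{k}$, $\beta_{m,n}^{k}$ and the $\Delta$'s, hence rational functions of $A$.

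For part~(2), let $\phi$ be the reflection of $D^2$ across a horizontal line. It is induced by a homeomorphism of the pair, hence descends to an involutive $\mathbb{Q}(A)$--module isomorphism $\mathscr{D}^{m,n}_{m',n'}\cong\mathscr{D}^{m',n'}_{m,n}$, and it sends $\mathcal{B}_{m',n'}^{m,n}(k,l)$ to $\mathcal{B}_{m,n}^{m',n'}(l,k)$, interchanging the two bubble colours. When $l\geq k$ the image has its larger bubble colour on top, so part~(1) expands it over the diagrams of $\mathscr{D}^{m',n'}_{m,n}$ with internal colours $i$ and $l-k+i$, $0\leq i\leq\min(m',n',k)$. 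Applying $\phi$ once more returns $\mathcal{B}_{m',n'}^{m,n}(k,l)$ as a $\mathbb{Q}(A)$--combination of the $\phi$--images of those diagrams, which are precisely the diagrams with internal colours $l-k+i$ and $i$ appearing in (\ref{bubble expansion formula2}); writing the resulting rational coefficients as $\left\lceil\begin{array}{cc}n' & m'\\ k & l\end{array}\right\rceil_i$ gives (2). (These expansion diagrams are pairwise non-isotopic and carry distinct admissible internal colourings, so they are linearly independent in the relative skein module; thus the coefficients are unambiguously defined, though only the existence of the expansion is used here.)

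The main obstacle is the bookkeeping for the second term $D$ of Lemma~\ref{lemma2} in the inductive step: one has to identify the smaller bubble sitting inside $D$, apply the inductive hypothesis (or Lemma~\ref{lemma1}) to it, absorb the colour-$1$ strand, and verify that the outcome lands exactly on $G_1,\dots,G_{\min(m,n,l)}$ — with the index shift by one and with nothing spilling outside that range. Once the shape of the expansion is established, determining the closed form of the coefficient $\left\lceil\begin{array}{cc}m & n\\ k & l\end{array}\right\rceil_i$ (Theorem~\ref{main111}) is a separate, more computational task: one feeds the recursion forced by Lemma~\ref{lemma2} into the displayed product/$q$--binomial formula and checks it is satisfied, the base case $l=1$ being the coefficient identification $\alpha_{m,n}^{k}=\left\lceil\begin{array}{cc}m & n\\ k & 1\end{array}\right\rceil_0$, $\beta_{m,n}^{k}=\left\lceil\begin{array}{cc}m & n\\ k & 1\end{array}\right\rceil_1$, which follows from $\Delta_{n+k}\Delta_{m+k-1}-\Delta_{n}\Delta_{m-1}=\Delta_{m+n+k}\Delta_{k-1}$.
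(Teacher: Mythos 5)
Your proof is correct and takes essentially the same approach as the paper: both rest on iterating the recursion of Lemma \ref{lemma2} with Lemma \ref{lemma1} as the starting point, your version merely organizing the paper's ``apply (\ref{main lemma}) repeatedly until every bubble terminates'' argument as a formal induction on $l$, with the same index-shift bookkeeping for the second term. Your reflection argument for part (2) simply makes explicit the paper's one-line reduction of (2) to (1).
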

\begin{proof}
(1) The trivial cases when $\min(m,n,l) \leq 1$ were discussed in Lemma \ref{lemma2}. Suppose that $\min(m,n,l) \geq 2$ and consider the identity (\ref{main lemma}). We apply this identity to the bubble skein elements appearing in the first and the second terms of (\ref{main lemma}). We obtain that $\mathcal{B}_{m^{\prime},n^{\prime}}^{m,n}(k,l)$ is equal to a $ \mathbb{Q}(A)$-linear sum of the three skein elements
\begin{eqnarray*}
   \begin{minipage}[h]{0.15\linewidth}
        \vspace{0pt}
        \scalebox{0.11}{\includegraphics{second-lemma-main-bubble}}
         \put(-68,+76){$m$}
          \put(-8,+76){$n$}
          \put(-68,-7){$m^{\prime}$}
          \put(-8,-7){$n^{\prime}$}
           \put(-48,+67){$k-2$}
         \put(-48,+3){$l-2$}
   \end{minipage}
   , \hspace{0 mm}
     \begin{minipage}[h]{0.15\linewidth}
        \vspace{1pt}
        \scalebox{0.11}{\includegraphics{second-lemma-main-bubble_secondterm.eps}}
        \put(-68,+76){$m$}
          \put(-8,+76){$n$}
          \put(-68,-7){$m^{\prime}$}
          \put(-8,-7){$n^{\prime}$}
           \put(-47,+33){$k-1$}
           \put(-40,+55){$1$}
         \put(-45,-3){$l-2$}
   \end{minipage},
  \hspace{ 0 mm}
   \begin{minipage}[h]{0.15\linewidth}
        \vspace{0pt}
        \scalebox{0.11}{\includegraphics{second-lemma-main-bubble_secondterm.eps}}
        \put(-68,+76){$m$}
          \put(-8,+76){$n$}
          \put(-68,-7){$m^{\prime}$}
          \put(-8,-7){$n^{\prime}$}
           \put(-40,+35){$k$}
           \put(-40,+55){$2$}
         \put(-45,-3){$l-2$}
   \end{minipage}.
   \end{eqnarray*}
Note that a bubble skein element appears in each of the three elements above. Moreover, each bubble has an index on at least one of its strands that is less than the index of the corresponding strand in (\ref{main lemma}). One could apply the identity (\ref{main lemma}) iteratively on each bubble that appears in the sum. This iterative process will eventually terminate all bubbles in each element in the summation. This yields that $\mathcal{B}_{m^{\prime},n^{\prime}}^{m,n}(k,l)$ is equal to a $\mathbb{Q}(A)$-linear sum of the $\mathscr{D}^{m,n}_{m^{\prime},n^{\prime}}$ skein elements
   \begin{eqnarray*}
  \begin{minipage}[h]{0.15\linewidth}
        \vspace{0pt}
        \scalebox{0.11}{\includegraphics{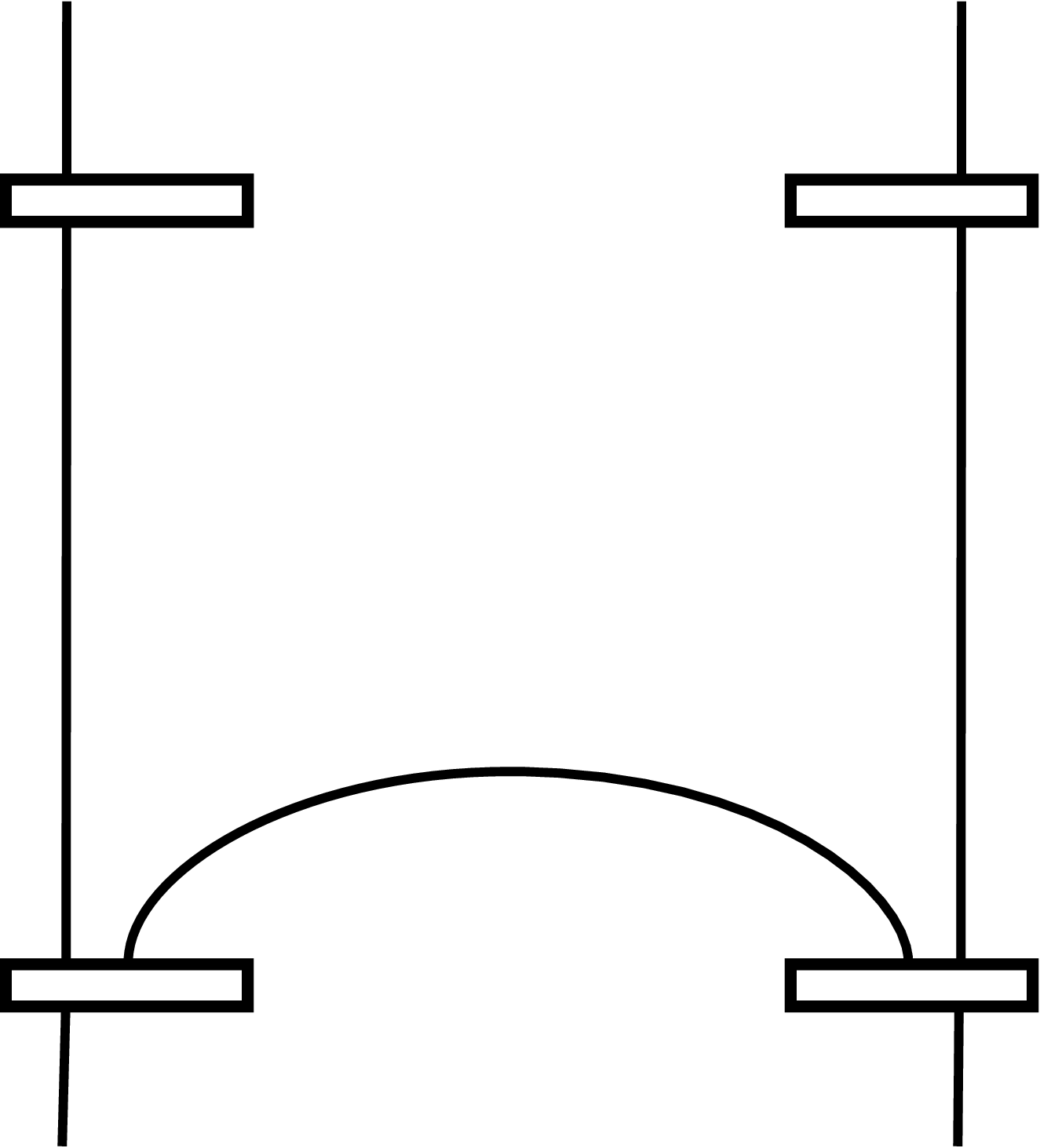}}
        \put(-68,+80){$m$}
          \put(-8,+80){$n$}
          \put(-68,-7){$m^{\prime}$}
          \put(-8,-7){$n^{\prime}$}
           \put(-45,+28){$k-l$}
          \end{minipage}
   ,\hspace{2pt}
 \begin{minipage}[h]{0.15\linewidth}
        \vspace{0pt}
        \scalebox{0.11}{\includegraphics{bubbl-expansion-diagram}}
        \put(-68,+80){$m$}
          \put(-8,+80){$n$}
          \put(-68,-7){$m^{\prime}$}
          \put(-8,-7){$n^{\prime}$}
           \put(-55,+28){$k-l+1$}
         \put(-37,47){$1$}
   \end{minipage}
   ,...\hspace{2pt},
   \begin{minipage}[h]{0.15\linewidth}
        \vspace{0pt}
        \scalebox{0.11}{\includegraphics{bubbl-expansion-diagram}}
        \put(-68,+80){$m$}
          \put(-8,+80){$n$}
          \put(-68,-7){$m^{\prime}$}
          \put(-8,-7){$n^{\prime}$}
           \put(-55,+28){$k-l+s$}
         \put(-37,47){$s$}
   \end{minipage}.
  \end{eqnarray*}
where $s= \min(m,n,l)$. Now the result follows by noticing that the variables $m,n,m^{\prime},n^{\prime},k$ and $l$ are related by the equations $m+k=m^{\prime}+l$ and $ n+k=n^{\prime}+l$ and hence it is sufficient to index the coefficients in the expansion of $\mathcal{B}_{m^{\prime},n^{\prime}}^{m,n}(k,l)$ in terms of the previous skein elements by five indices. (2) Follows from (1).
\end{proof}
Now we determine the coefficients $\left\lceil 
\begin{array}{cc}
m & n \\ 
k & l%
\end{array}%
\right\rceil _{i}$. 
\begin{proposition}
Let $m,n,m^{\prime},n^{\prime} \geq 0$, $k,l \geq 1$. Let $\mathcal{B}_{m^{\prime},n^{\prime}}^{m,n}(k,l)$  be a bubble skein element in $\mathscr{D}^{m,n}_{m^{\prime},n^{\prime}}$ such that $k \geq l$. Then, for $0\leq i \leq \min(m,n,l)$, the rational function  $   \left\lceil 
\begin{array}{cc}
m & n \\ 
k & l%
\end{array}%
\right\rceil _{i}$ satisfies the following recursive identity:

 \begin{equation}
   \label{coefrecur}
   \left\lceil 
\begin{array}{cc}
m & n \\ 
k & l%
\end{array}%
\right\rceil _{i}=\alpha_{m,n}^k\times\left\lceil 
\begin{array}{cc}
m & n \\ 
k-1 & l-1%
\end{array}%
\right\rceil _{i}
+ \beta_{m,n}^k\times
\left\lceil 
\begin{array}{cc}
m-1 & n-1 \\ 
k & l-1%
\end{array}%
\right\rceil _{i-1}.
   \end{equation}
\end{proposition}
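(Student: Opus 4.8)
The plan is to feed the bubble expansion formula of Theorem~\ref{main}(1) into the recursion of Lemma~\ref{lemma2} and then read off a scalar recursion by comparing coefficients of a fixed linearly independent diagram. Throughout, keep $m,n,m^{\prime},n^{\prime}$ fixed (recall $m+k=m^{\prime}+l$ and $n+k=n^{\prime}+l$), and write $D_i$ for the skein element of $\mathscr{D}^{m,n}_{m^{\prime},n^{\prime}}$ that appears with coefficient $\left\lceil\begin{smallmatrix}m&n\\k&l\end{smallmatrix}\right\rceil_i$ on the right-hand side of (\ref{bubble expansion formula1}), i.e.\ the one carrying label $i$ on its short internal edge and $k-l+i$ on its long internal edge. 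The key structural point is that $(k-1)-(l-1)=k-l$, so the diagrams occurring when one expands $\mathcal{B}_{m^{\prime},n^{\prime}}^{m,n}(k-1,l-1)$ via Theorem~\ref{main}(1) are \emph{the same} family $\{D_i\}$; and since $\{D_i\}$ is $\mathbb{Q}(A)$-linearly independent in $\mathscr{D}^{m,n}_{m^{\prime},n^{\prime}}$ (as already used in the proof of Theorem~\ref{main}), equating coefficients of $D_i$ on the two sides is legitimate.

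Concretely I would proceed as follows. Expand the left-hand side of (\ref{main lemma}) as $\sum_i \left\lceil\begin{smallmatrix}m&n\\k&l\end{smallmatrix}\right\rceil_i D_i$. For the first term on the right, apply Theorem~\ref{main}(1) to $\mathcal{B}_{m^{\prime},n^{\prime}}^{m,n}(k-1,l-1)$: its $i$-th expansion diagram is again $D_i$, so this term contributes $\alpha_{m,n}^k\,\left\lceil\begin{smallmatrix}m&n\\k-1&l-1\end{smallmatrix}\right\rceil_i$ to the coefficient of $D_i$. For the second term, unwinding the $f^{(n+k)}$-expansion used to produce (\ref{main lemma}) exhibits it as $\beta_{m,n}^k$ times the bubble skein element with the two upper colors reduced by one, namely $\mathcal{B}_{m^{\prime},n^{\prime}}^{m-1,n-1}(k,l-1)$ (the constraints $(m-1)+k=m^{\prime}+(l-1)$ and $(n-1)+k=n^{\prime}+(l-1)$ do hold, so this is a genuine bubble element), carrying one additional $1$-labeled strand running alongside the top. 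Expanding this bubble by Theorem~\ref{main}(1) gives $\sum_j \left\lceil\begin{smallmatrix}m-1&n-1\\k&l-1\end{smallmatrix}\right\rceil_j$ times the diagram with short label $j$ and long label $k-(l-1)+j=k-l+1+j$; fusing the parallel $1$-strand into the Jones--Wenzl idempotent along the top promotes this diagram to $D_{j+1}$, whose long edge is indeed $k-l+(j+1)$. Hence, setting $i=j+1$, the second term contributes $\beta_{m,n}^k\,\left\lceil\begin{smallmatrix}m-1&n-1\\k&l-1\end{smallmatrix}\right\rceil_{i-1}$ to the coefficient of $D_i$. Collecting the two contributions and invoking linear independence gives exactly (\ref{coefrecur}).

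I expect the genuine content to be the identification of the second term in (\ref{main lemma}) and the verification that absorbing the extra $1$-strand is precisely the index shift $j\mapsto j+1$ — that it produces an honest basis element $D_i$ rather than something requiring further resolution, and that the summation ranges line up. At the low end, $i=0$, the $\beta$-contribution must vanish, consistent with the convention $\left\lceil\begin{smallmatrix}m-1&n-1\\k&l-1\end{smallmatrix}\right\rceil_{-1}=0$; and when $i=\min(m,n,l)$ exceeds $\min(m,n,l-1)$ the $\alpha$-contribution must vanish, consistent with the convention that a diagram with a negatively labelled strand is zero (equivalently, that the coefficient symbol is extended by $0$ outside its stated range). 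Once these endpoint cases are matched against the conventions fixed in Section~\ref{section2}, the comparison of coefficients is immediate and the proposition follows.
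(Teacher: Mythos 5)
Your proposal is correct and follows essentially the same route as the paper: the paper likewise substitutes the expansion (\ref{bubble expansion formula1}) into both sides of (\ref{main lemma}), recognizes the second term as $\beta_{m,n}^k$ times the expansion of the smaller bubble with the extra $1$-strand absorbed into the idempotent (producing the index shift $i\mapsto i+1$), and then equates coefficients using the linear independence of the expansion diagrams in $\mathscr{D}^{m,n}_{m^{\prime},n^{\prime}}$. The endpoint conventions you flag are handled in the paper exactly as you suggest, by extending the coefficient symbol by zero outside its range.
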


\begin{proof}
Substitute (\ref{bubble expansion formula1}) in both sides of (\ref{main lemma}): 
{\small

\begin{eqnarray*}
 \displaystyle\sum\limits_{i=0}^{\min(m,n,l)}
     \left\lceil 
\begin{array}{cc}
m & n \\ 
k & l%
\end{array}%
\right\rceil _{i}
   \begin{minipage}[h]{0.15\linewidth}
        \vspace{0pt}
        \scalebox{0.11}{\includegraphics{bubbl-expansion-diagram}}
        \put(-68,+80){$m$}
          \put(-8,+80){$n$}
          \put(-68,-7){$m^{\prime}$}
          \put(-8,-7){$n^{\prime}$}
           \put(-37,+50){$i$}
         \put(-52,28){$k-l+i$}
   \end{minipage}&=&\alpha_{m,n}^k\displaystyle\sum\limits_{i=0}^{\min(m,n,l-1)}
     \left\lceil 
\begin{array}{cc}
m & n \\ 
k-1 & l-1%
\end{array}%
\right\rceil _{i}
   \begin{minipage}[h]{0.15\linewidth}
        \vspace{0pt}
        \scalebox{0.11}{\includegraphics{bubbl-expansion-diagram}}
        \put(-68,+80){$m$}
          \put(-8,+80){$n$}
          \put(-68,-7){$m^{\prime}$}
          \put(-8,-7){$n^{\prime}$}
           \put(-37,+50){$i$}
         \put(-52,28){$k-l+i$}
   \end{minipage}\\&+&\beta_{m,n}^k\displaystyle\sum\limits_{i=0}^{\min(m-1,n-1,l-1)}
     \left\lceil 
\begin{array}{cc}
m-1 & n-1 \\ 
k & l-1%
\end{array}%
\right\rceil _{i}
  \begin{minipage}[h]{0.15\linewidth}
        \vspace{0pt}
        \scalebox{0.11}{\includegraphics{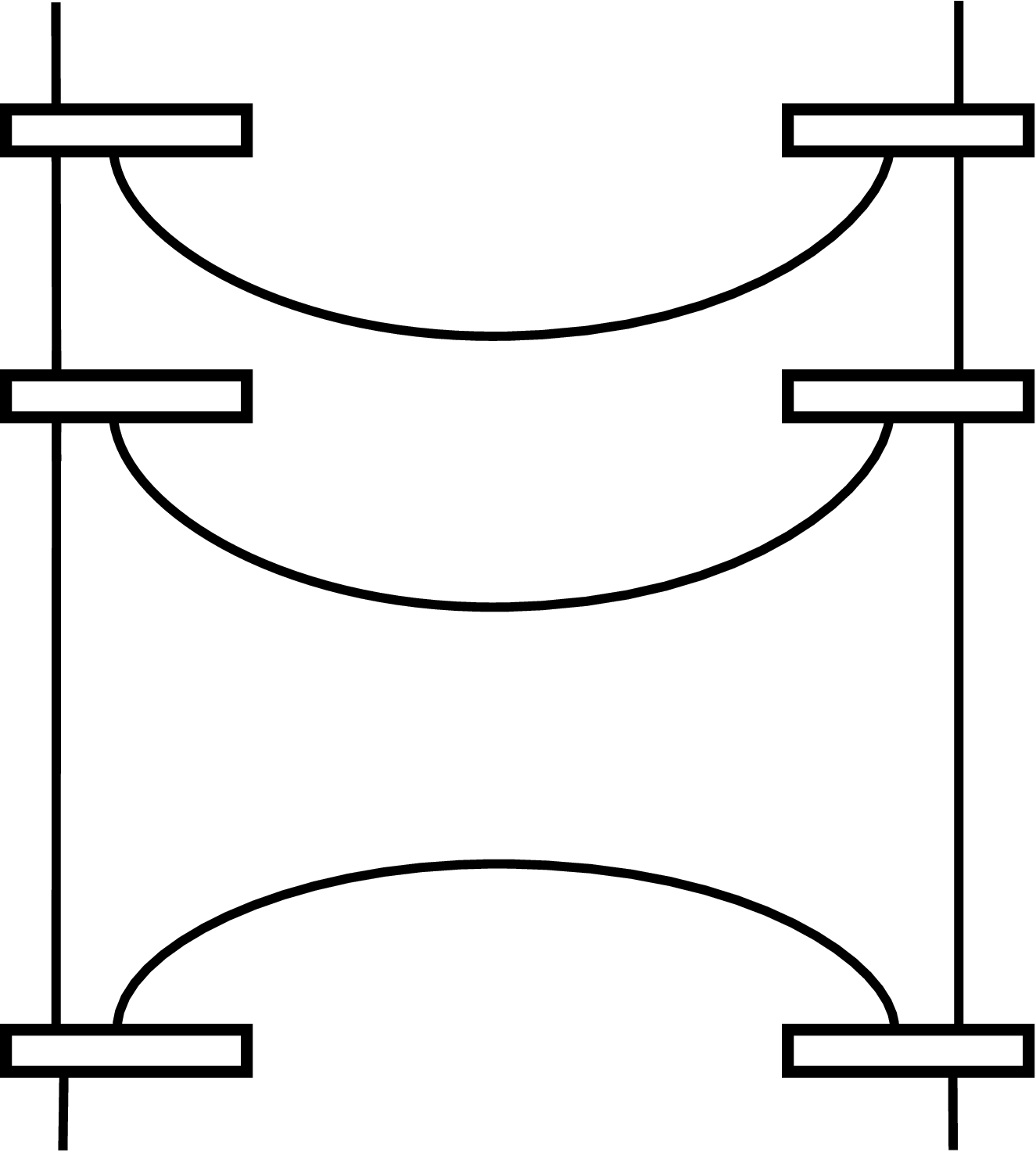}}
        \put(-68,+80){$m$}
          \put(-8,+80){$n$}
          \put(-68,-7){$m^{\prime}$}
          \put(-8,-7){$n^{\prime}$}
           \put(-37,+40){$i$}
         \put(-61,22){$k-l+i+1$}
          \put(-37,+58){$1$}
   \end{minipage}.
  \end{eqnarray*}
}
Hence
{\small
\begin{eqnarray*}
 \displaystyle\sum\limits_{i=0}^{\min(m,n,l)}
   \left\lceil 
\begin{array}{cc}
m & n \\ 
k & l%
\end{array}%
\right\rceil _{i}
   \begin{minipage}[h]{0.15\linewidth}
        \vspace{0pt}
        \scalebox{0.11}{\includegraphics{bubbl-expansion-diagram}}
        \put(-68,+80){$m$}
          \put(-8,+80){$n$}
          \put(-68,-7){$m^{\prime}$}
          \put(-8,-7){$n^{\prime}$}
           \put(-37,+50){$i$}
         \put(-52,28){$k-l+i$}
   \end{minipage}&=&\displaystyle\sum\limits_{i=0}^{\min(m,n,l-1)}
     \alpha_{m,n}^k\times
     \left\lceil 
\begin{array}{cc}
m & n \\ 
k-1 & l-1%
\end{array}%
\right\rceil _{i}
  \begin{minipage}[h]{0.15\linewidth}
        \vspace{0pt}
        \scalebox{0.11}{\includegraphics{bubbl-expansion-diagram}}
        \put(-68,+80){$m$}
          \put(-8,+80){$n$}
          \put(-68,-7){$m^{\prime}$}
          \put(-8,-7){$n^{\prime}$}
           \put(-37,+50){$i$}
         \put(-52,28){$k-l+i$}
   \end{minipage}\\&+&\displaystyle\sum\limits_{i=0}^{\min(m-1,n-1,l-1)}
     \beta_{m,n}^k\times\left\lceil 
\begin{array}{cc}
m-1 & n-1 \\ 
k & l-1%
\end{array}%
\right\rceil _{i}
 \begin{minipage}[h]{0.15\linewidth}
        \vspace{0pt}
        \scalebox{0.11}{\includegraphics{bubbl-expansion-diagram}}
        \put(-68,+80){$m$}
          \put(-8,+80){$n$}
          \put(-68,-7){$m^{\prime}$}
          \put(-8,-7){$n^{\prime}$}
           \put(-44,+50){$i+1$}
         \put(-61,28){$k-l+i+1$}
   \end{minipage}.
  \end{eqnarray*}
}
The latter can be written as
{\small
\begin{eqnarray*}
 \displaystyle\sum\limits_{i=0}^{\min(m,n,l)}
    \left\lceil 
\begin{array}{cc}
m & n \\ 
k & l%
\end{array}%
\right\rceil _{i}
   \begin{minipage}[h]{0.15\linewidth}
        \vspace{0pt}
        \scalebox{0.11}{\includegraphics{bubbl-expansion-diagram}}
        \put(-68,+80){$m$}
          \put(-8,+80){$n$}
          \put(-68,-7){$m^{\prime}$}
          \put(-8,-7){$n^{\prime}$}
           \put(-37,+50){$i$}
         \put(-52,28){$k-l+i$}
   \end{minipage}&=&\displaystyle\sum\limits_{i=0}^{\min(m,n,l-1)}
     \alpha_{m,n}^k\times
     \left\lceil 
\begin{array}{cc}
m & n \\ 
k-1 & l-1%
\end{array}%
\right\rceil _{i}
   \begin{minipage}[h]{0.15\linewidth}
        \vspace{0pt}
        \scalebox{0.11}{\includegraphics{bubbl-expansion-diagram}}
        \put(-68,+80){$m$}
          \put(-8,+80){$n$}
          \put(-68,-7){$m^{\prime}$}
          \put(-8,-7){$n^{\prime}$}
           \put(-37,+50){$i$}
         \put(-52,28){$k-l+i$}
   \end{minipage}\\&+&\displaystyle\sum\limits_{i=1}^{\min(m,n,l)}
     \beta_{m,n}^k\times
     \left\lceil 
\begin{array}{cc}
m-1 & n-1 \\ 
k & l-1%
\end{array}%
\right\rceil _{i-1}
   \begin{minipage}[h]{0.15\linewidth}
        \vspace{0pt}
        \scalebox{0.11}{\includegraphics{bubbl-expansion-diagram}}
        \put(-68,+80){$m$}
          \put(-8,+80){$n$}
          \put(-68,-7){$m^{\prime}$}
          \put(-8,-7){$n^{\prime}$}
           \put(-37,+50){$i$}
         \put(-52,28){$k-l+i$}
   \end{minipage}.
  \end{eqnarray*}
}
        Now note that the elements \begin{minipage}[h]{0.15\linewidth}
        \vspace{0pt}
        \scalebox{0.11}{\includegraphics{bubbl-expansion-diagram}}
        \put(-68,+80){$m$}
          \put(-8,+80){$n$}
          \put(-68,-7){$m^{\prime}$}
          \put(-8,-7){$n^{\prime}$}
           \put(-37,+50){$i$}
         \put(-52,28){$k-l+i$}
   \end{minipage}, where $0\leq i\leq \min(m,n,l)$, are linearly independent in the module $\mathscr{D}^{m,n}_{m^{\prime},n^{\prime}}$ by Remarks \ref{remark1} and \ref{remark2}. Hence we conclude that 
   \begin{equation*}
   \left\lceil 
\begin{array}{cc}
m & n \\ 
k & l%
\end{array}%
\right\rceil _{i}=\alpha_{m,n}^k\times\left\lceil 
\begin{array}{cc}
m & n \\ 
k-1 & l-1%
\end{array}%
\right\rceil _{i}
+ \beta_{m,n}^k\times
\left\lceil 
\begin{array}{cc}
m-1 & n-1 \\ 
k & l-1%
\end{array}%
\right\rceil _{i-1}.
   \end{equation*}
   
\end{proof}
\begin{remark}
The coefficients $\left\lceil 
\begin{array}{cc}
m & n \\ 
k & l%
\end{array}%
\right\rceil _{i}$ behave like the binomial coefficients ${l \choose i}$ in the sense that $\left\lceil 
\begin{array}{cc}
m & n \\ 
k & l%
\end{array}%
\right\rceil _{i}=0$ when $i<0$ or $i>l$. Note also that the recursion formula for $\left\lceil 
\begin{array}{cc}
m & n \\ 
k & l%
\end{array}%
\right\rceil _{i}$, when one focuses the variables $l$ and $i$, is analogues to the recursion formula of the binomial coefficients ${l \choose i}$. 
\end{remark}
The following theorem gives a closed formula for the rational function $\left\lceil 
\begin{array}{cc}
m & n \\ 
k & l%
\end{array}%
\right\rceil _{i}$.
\begin{theorem}
Let $m,n,k,l \geq 0$, and $k \geq l$ and $0\leq i \leq \min(m,n,l)$. Then
\begin{equation}
\left\lceil 
\begin{array}{cc}
m & n \\ 
k & l%
\end{array}%
\right\rceil _{i}=(-A^2)^{i(i-l)}\frac{\displaystyle\prod_{j=0}^{l-i-1}\Delta
_{k-j-1}\prod_{s=0}^{i-1}\Delta _{n-s-1}\Delta _{m-s-1}}{%
\displaystyle\prod_{t=0}^{l-1}\Delta _{n+k-t-1}\Delta _{m+k-t-1}}{l \brack i}_{A^4}\prod_{j=0}^{l-i-1}\Delta_{m+n+k-i-j}.
\end{equation}
\end{theorem}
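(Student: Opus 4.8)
The plan is to prove the closed formula by induction on $l$, feeding on the recursion \eqref{coefrecur} established in the previous proposition. Throughout set $N=m+n+k$ and $v=A^{2}$, write $[n]=\dfrac{v^{n}-v^{-n}}{v-v^{-1}}$ so that $\Delta_{n}=(-1)^{n}[n+1]$, and use the rewriting $\alpha_{m,n}^{k}=\dfrac{\Delta_{N}\Delta_{k-1}}{\Delta_{n+k-1}\Delta_{m+k-1}}$, which follows from the identity $\Delta_{m+k}\Delta_{n+k-1}-\Delta_{m}\Delta_{n-1}=\Delta_{m+n+k}\Delta_{k-1}$ together with the $m\leftrightarrow n$ symmetry of $\alpha_{m,n}^{k}$ recorded in the Remark after Lemma~\ref{lemma1}; recall also $\beta_{m,n}^{k}=\dfrac{\Delta_{m-1}\Delta_{n-1}}{\Delta_{n+k-1}\Delta_{m+k-1}}$. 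For the base case $l=1$ the admissible indices are $i\in\{0,1\}$; Lemma~\ref{lemma1} gives $\left\lceil\begin{array}{cc}m & n\\ k & 1\end{array}\right\rceil_{0}=\alpha_{m,n}^{k}$ and $\left\lceil\begin{array}{cc}m & n\\ k & 1\end{array}\right\rceil_{1}=\beta_{m,n}^{k}$, while the right-hand side of the claimed formula evaluates, straight from the definition, to $\dfrac{\Delta_{k-1}\Delta_{N}}{\Delta_{n+k-1}\Delta_{m+k-1}}$ at $i=0$ and to $\dfrac{\Delta_{m-1}\Delta_{n-1}}{\Delta_{n+k-1}\Delta_{m+k-1}}$ at $i=1$; these agree by the rewriting of $\alpha$. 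The degenerate cases $\min(m,n)=0$ are checked as in Lemma~\ref{lemma1}.

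For the inductive step, assume the closed formula at level $l-1$ for all admissible data and plug it, along with the expressions for $\alpha_{m,n}^{k}$ and $\beta_{m,n}^{k}$, into the right-hand side of \eqref{coefrecur}; we must recover the closed formula at level $l$. After performing the index shifts $(k,l)\mapsto(k-1,l-1)$ in the $\alpha$-term and $(m,n,l,i)\mapsto(m-1,n-1,l-1,i-1)$ in the $\beta$-term (both of which stay in the admissible range since $k\geq l$), one recombines the $\Delta$-products using the telescopings $\Delta_{n+k-1}\prod_{t=0}^{l-2}\Delta_{n+k-t-2}=\prod_{t=0}^{l-1}\Delta_{n+k-t-1}$ (and its $m$-analogue), $\Delta_{k-1}\prod_{j=0}^{l-i-2}\Delta_{k-j-2}=\prod_{j=0}^{l-i-1}\Delta_{k-j-1}$, $\Delta_{m-1}\Delta_{n-1}\prod_{s=0}^{i-2}\Delta_{m-s-2}\Delta_{n-s-2}=\prod_{s=0}^{i-1}\Delta_{m-s-1}\Delta_{n-s-1}$, $\prod_{j=0}^{l-i-1}\Delta_{N-i-j}=\Delta_{N-i}\prod_{j=0}^{l-i-2}\Delta_{N-i-1-j}$ and $\prod_{j=0}^{l-i-1}\Delta_{N-i-1-j}=\Delta_{N-l}\prod_{j=0}^{l-i-2}\Delta_{N-i-1-j}$. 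The upshot is that the level-$l$ closed formula and both right-hand terms are equal to the common factor
\[
G:=\frac{\big(\prod_{j=0}^{l-i-1}\Delta_{k-j-1}\big)\big(\prod_{s=0}^{i-1}\Delta_{m-s-1}\Delta_{n-s-1}\big)\big(\prod_{j=0}^{l-i-2}\Delta_{N-i-1-j}\big)}{\prod_{t=0}^{l-1}\Delta_{n+k-t-1}\Delta_{m+k-t-1}}
\]
times, respectively, $(-A^{2})^{i(i-l)}{l\brack i}_{A^{4}}\Delta_{N-i}$, $(-A^{2})^{i(i-l+1)}{l-1\brack i}_{A^{4}}\Delta_{N}$, and $(-A^{2})^{(i-1)(i-l)}{l-1\brack i-1}_{A^{4}}\Delta_{N-l}$. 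Every $\Delta$ occurring in $G$ is a nonzero element of $\mathbb{Q}(A)$, because the constraints $m+k=m'+l$ and $n+k=n'+l$ keep all relevant indices nonnegative, so $G$ may be cancelled; dividing also by $(-A^{2})^{i(i-l)}$, the step reduces to the scalar identity
\[
{l\brack i}_{A^{4}}\,\Delta_{N-i}=(-A^{2})^{i}\,\Delta_{N}\,{l-1\brack i}_{A^{4}}+(-A^{2})^{l-i}\,\Delta_{N-l}\,{l-1\brack i-1}_{A^{4}}.
\]

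To finish, I would prove this scalar identity by passing to balanced quantum integers. From $1-v^{2j}=-v^{j}(v-v^{-1})[j]$ one gets ${l\brack i}_{A^{4}}={l\brack i}_{v^{2}}=v^{i(l-i)}\dfrac{[l]!}{[i]![l-i]!}$ with $[n]!=[1][2]\cdots[n]$; substituting this and $\Delta_{n}=(-1)^{n}[n+1]$ into the scalar identity and cancelling the common factor $(-1)^{N-i}v^{i(l-i)}$ from all three terms turns it into the balanced $q$-Pascal relation
\[
[N-i+1]\,\frac{[l]!}{[i]![l-i]!}=[N+1]\,\frac{[l-1]!}{[i]![l-1-i]!}+[N-l+1]\,\frac{[l-1]!}{[i-1]![l-i]!}.
\]
This I would derive from three familiar facts: the subtraction rule $[x-y]=v^{-y}[x]-v^{-x}[y]$ (so $[N-i+1]=v^{-i}[N+1]-v^{-(N+1)}[i]$ and $v^{-l}[N+1]-v^{-(N+1)}[l]=[N-l+1]$), the $q$-Pascal rule $v^{-i}\dfrac{[l]!}{[i]![l-i]!}=\dfrac{[l-1]!}{[i]![l-1-i]!}+v^{-l}\dfrac{[l-1]!}{[i-1]![l-i]!}$, and the absorption $[i]\dfrac{[l]!}{[i]![l-i]!}=[l]\dfrac{[l-1]!}{[i-1]![l-i]!}$: multiply $[N-i+1]=v^{-i}[N+1]-v^{-(N+1)}[i]$ by $\dfrac{[l]!}{[i]![l-i]!}$ and apply the last two facts; the coefficient of $\dfrac{[l-1]!}{[i-1]![l-i]!}$ then collapses to $v^{-l}[N+1]-v^{-(N+1)}[l]=[N-l+1]$, which is exactly the claim.

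The scalar identity itself is short; the main obstacle is the bookkeeping in the inductive step — simultaneously tracking the two families of index shifts, the telescopings, the cancellation of $G$, and the signs and powers of $A$. One should also check that the extreme indices require no separate argument: at $i=0$ the $\beta$-term of \eqref{coefrecur} vanishes and at $i=l$ (hence $i>l-1$) the $\alpha$-term does, and in each case a Gaussian binomial (${l-1\brack i}_{A^{4}}$ or ${l-1\brack i-1}_{A^{4}}$) or an empty $\Delta$-range in the closed formula degenerates so as to reproduce precisely the same behaviour, so the induction goes through uniformly.
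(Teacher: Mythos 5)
Your proposal is correct and takes essentially the same route as the paper: the paper's proof consists of the single assertion that the closed formula agrees with the recursion \eqref{coefrecur}, and your induction on $l$ (base case from Lemma \ref{lemma1}, telescoping the $\Delta$-products, and reducing to the identity ${l\brack i}_{A^4}\Delta_{N-i}=(-A^2)^{i}\Delta_{N}{l-1\brack i}_{A^4}+(-A^2)^{l-i}\Delta_{N-l}{l-1\brack i-1}_{A^4}$, verified by the balanced $q$-Pascal and absorption rules) is precisely the verification the paper leaves implicit. I see no gaps; your bookkeeping of the index shifts, signs, and powers of $A$ checks out.
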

\begin{proof}
This equation agrees with the recursion identity (\ref{coefrecur}).
\end{proof}
The symmetry of the bubble skein element, or the previous formula for the coefficients $\left\lceil 
\begin{array}{cc}
m & n \\ 
k & l%
\end{array}%
\right\rceil _{i}$, implies immediately the following.
\begin{proposition}
Let $m,n,k,l \geq 0$. Let $k \geq l$. Then 
\begin{equation}
\left\lceil 
\begin{array}{cc}
m & n \\ 
k & l%
\end{array}%
\right\rceil _{i}=\left\lceil 
\begin{array}{cc}
n & m \\ 
k & l%
\end{array}%
\right\rceil _{i}.
\end{equation}
\end{proposition}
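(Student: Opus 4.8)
The plan is to prove the identity in the cleanest possible way, namely by reading the symmetry directly off the closed-form expression for $\left\lceil \begin{array}{cc} m & n \\ k & l \end{array} \right\rceil _{i}$ established in the preceding theorem. I will also indicate a purely diagrammatic argument that makes the symmetry conceptually transparent and that does not require having the closed formula in hand.

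For the algebraic route I would simply inspect, one factor at a time, the expression
\begin{equation*}
\left\lceil \begin{array}{cc} m & n \\ k & l \end{array} \right\rceil _{i}
= (-A^2)^{i(i-l)}\,\frac{\prod_{j=0}^{l-i-1}\Delta _{k-j-1}\,\prod_{s=0}^{i-1}\Delta _{n-s-1}\Delta _{m-s-1}}{\prod_{t=0}^{l-1}\Delta _{n+k-t-1}\Delta _{m+k-t-1}}\,{l \brack i}_{A^4}\,\prod_{j=0}^{l-i-1}\Delta_{m+n+k-i-j}.
\end{equation*}
The prefactor $(-A^2)^{i(i-l)}$, the product $\prod_{j=0}^{l-i-1}\Delta _{k-j-1}$, and the quantum binomial ${l \brack i}_{A^4}$ do not involve $m$ or $n$. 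Each of the three remaining pieces is manifestly invariant under the transposition $m\leftrightarrow n$: the numerator product $\prod_{s=0}^{i-1}\Delta _{n-s-1}\Delta _{m-s-1}$ and the denominator product $\prod_{t=0}^{l-1}\Delta _{n+k-t-1}\Delta _{m+k-t-1}$ are symmetric by construction, while $\prod_{j=0}^{l-i-1}\Delta_{m+n+k-i-j}$ depends on $m,n$ only through the symmetric combination $m+n$. Hence the whole expression is fixed by $m\leftrightarrow n$, which is exactly the assertion. There is no real obstacle here; the only ``work'' is this bookkeeping, and even that is immediate.

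For the diagrammatic route one observes that reflecting the rectangular disk $D^2$ across its vertical axis carries the bubble skein element $\mathcal{B}_{m^{\prime},n^{\prime}}^{m,n}(k,l)$ of Figure \ref{bubble} to $\mathcal{B}_{n^{\prime},m^{\prime}}^{n,m}(k,l)$, since this reflection interchanges the top-left and top-right clusters and, simultaneously, the bottom-left and bottom-right clusters, while fixing the two internal strands labelled $k$ and $l$. The relations $m+k=m^{\prime}+l$ and $n+k=n^{\prime}+l$ force $m\leftrightarrow n$ to be accompanied by $m^{\prime}\leftrightarrow n^{\prime}$, so this is a legitimate identity of modules, and the same reflection sends each basis diagram on the right-hand side of (\ref{bubble expansion formula1}) to the corresponding basis diagram with $m,n$ and $m^{\prime},n^{\prime}$ swapped, leaving the internal labels $i$ and $k-l+i$ untouched. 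Comparing this reflected expansion with the expansion of $\mathcal{B}_{n^{\prime},m^{\prime}}^{n,m}(k,l)$ given directly by Theorem \ref{main}(1), and invoking the linear independence of the basis elements of $\mathscr{D}^{m,n}_{m^{\prime},n^{\prime}}$ already used in the derivation of the recursion (\ref{coefrecur}), yields $\left\lceil \begin{smallmatrix} m & n \\ k & l \end{smallmatrix} \right\rceil _{i} = \left\lceil \begin{smallmatrix} n & m \\ k & l \end{smallmatrix} \right\rceil _{i}$. Either argument suffices; I would present the one-line algebraic verification as the proof and mention the reflection symmetry as the structural reason behind it.
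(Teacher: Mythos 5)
Your proof is correct and follows essentially the same route as the paper, which likewise deduces the identity ``immediately'' either from the manifest $m\leftrightarrow n$ symmetry of the closed formula for the coefficients or from the symmetry of the bubble skein element itself. Both of your arguments (the factor-by-factor inspection and the reflection of the disk) are exactly the two justifications the paper alludes to, so there is nothing to add.
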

It is preferred sometimes to write the the coefficients $\left\lceil 
\begin{array}{cc}
m & n \\ 
k & l%
\end{array}%
\right\rceil _{i}$ in terms of quantum integers rather than deltas.
Recall that $\Delta _{n}=(-1)^{n}[n+1]$ and hence the sign of the term$\displaystyle
\prod_{j=0}^{l-i-1}[k-j]$ can be easily calculated to be $(-1)^{-(\frac{1}{%
2})(i-l)(-1+i+2k-l).}$ Similarly, the sign of $\displaystyle\prod_{s=0}^{i-1}[n-s]$
is $(-1)^{-(\frac{1}{2})i(1+i-2n)},$ the sign of $\displaystyle\prod_{t=0}^{l-1}[n+k-t]$
is $(-1)^{-(\frac{1}{2})l(1-2k+l-2n)}$, and the sign of $\displaystyle\prod_{j=0}^{l-i-1}[m+n+k-i-j+1]$ is $(-1)^{-(\frac{1}{2})(i-l)(1+i+2k-l+2m+2n)}$. Thus the exponent of $-1$ of the whole term is $
-i-2ik-l+2il+4kl-2l^2+2lm+2ln=i+l\pmod 2$. Hence the previous theorem can be rewritten as follows:

\begin{corollary}
\label{coeffff}
Let $m,n,k,l \geq 0$. Let $k \geq l$ and $0\leq i \leq \min(m,n,l)$. Then\\
\begin{equation*}
\left\lceil 
\begin{array}{cc}
m & n \\ 
k & l%
\end{array}%
\right\rceil _{i}=(-1)^{i+l}a^{i(i-l)}\frac{\displaystyle\prod_{j=0}^{l-i-1}[k-j]\prod_{s=0}^{i-1}[n-s][m-s]}{\displaystyle\prod_{t=0}^{l-1}[n+k-t][m+k-t]}
{l \brack i}_{a^2}\prod_{j=0}^{l-i-1}[m+n+k-i-j+1].
\end{equation*}
\end{corollary}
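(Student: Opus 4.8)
The plan is to read off the quantum-integer expression directly from the closed $\Delta$-formula proved just above, via the single substitution $\Delta_{N}=(-1)^{N}[N+1]$ together with two elementary bookkeeping facts: $(-A^{2})^{i(i-l)}=(-1)^{i(i-l)}a^{i(i-l)}$, and, because $a=A^{2}$ forces $A^{4}=a^{2}$, the identity ${l \brack i}_{A^{4}}={l \brack i}_{a^{2}}$. After these replacements every product of $\Delta$'s in the previous theorem becomes the corresponding product of quantum integers times an explicit power of $-1$, so the whole task is to add up the signs and check that the total reduces to $(-1)^{i+l}$.

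In detail I would proceed as follows. First, I would rewrite each product of deltas in the numerator; for instance
\[
\prod_{j=0}^{l-i-1}\Delta_{k-j-1}=(-1)^{\sum_{j=0}^{l-i-1}(k-j-1)}\prod_{j=0}^{l-i-1}[k-j],
\]
and similarly for $\prod_{s=0}^{i-1}\Delta_{n-s-1}\Delta_{m-s-1}$ and for $\prod_{j=0}^{l-i-1}\Delta_{m+n+k-i-j}$; for the denominator I would use $1/\Delta_{N}=(-1)^{N}/[N+1]$ (since $(-1)^{-N}=(-1)^{N}$), which contributes $(-1)^{\sum_{t=0}^{l-1}(n+k-t-1)}$ and $(-1)^{\sum_{t=0}^{l-1}(m+k-t-1)}$. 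Second, I would evaluate each sign exponent using $\sum_{j=0}^{N-1}(c-j)=Nc-\binom{N}{2}$, noting that $\binom{N}{2}$ occurs exactly twice for each of $N=l-i$, $N=i$, $N=l$ (once in each member of a matching pair of products), so these contribute $\pm 2\binom{N}{2}\equiv 0\pmod 2$ and may be dropped. Third, I would add the extra exponent $i(i-l)$ coming from $(-A^{2})^{i(i-l)}$ and collect all remaining linear and quadratic terms in $i,l,m,n,k$; this is precisely the computation displayed in the paragraph preceding the statement, and upon reduction modulo $2$ every mixed term is even and the total exponent is $\equiv i+l$. Finally, replacing $(A^{2})^{i(i-l)}$ by $a^{i(i-l)}$ and ${l \brack i}_{A^{4}}$ by ${l \brack i}_{a^{2}}$ produces exactly the stated formula.

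The only genuine work is the sign accounting, and the main point to be careful about is orienting the denominator signs correctly (that $(-1)^{-N}=(-1)^{N}$, so a product appearing in the denominator contributes the same power of $-1$ it would in the numerator) and confirming that every cross term -- $2ik$, $2il$, $4kl$, $2l^{2}$, $2lm$, $2ln$, along with the paired $\binom{N}{2}$ terms -- vanishes modulo $2$, leaving only $-i-l\equiv i+l$. I expect no conceptual obstacle. If one prefers to avoid sign-chasing altogether, an alternative is to verify that the right-hand side of the asserted identity satisfies the recursion (\ref{coefrecur}) after factoring out the common sign $(-1)^{i+l}$, exactly as in the proof of the $\Delta$-version; but the substitution argument above is the more direct route.
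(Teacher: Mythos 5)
Your proposal is correct and follows the same route as the paper: substitute $\Delta_N=(-1)^N[N+1]$ into the closed $\Delta$-formula of the preceding theorem, note $(-A^2)^{i(i-l)}=(-1)^{i(i-l)}a^{i(i-l)}$ and ${l \brack i}_{A^4}={l \brack i}_{a^2}$, and verify by the same parity bookkeeping (paired $\binom{N}{2}$ terms and even cross terms dropping out) that the accumulated sign reduces to $(-1)^{i+l}$, which is exactly the computation the paper carries out in the paragraph before the corollary.
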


\section{Applications}
\label{section5}
\subsection{The theta graph}
A theta graph is a spin network in $S^2$ that plays an important role in computing arbitrary spin network in $S^2$. The evaluation of a theta graph in $\mathcal{S}(S^2)$ is equivalent to find the evaluation of the skein element in Figure \ref{theta}.
\begin{figure}[htb]
  \centering
   {\includegraphics[scale=0.10]{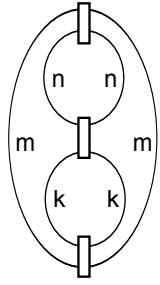}
  \caption{The skein element $\Lambda(m,n,k)$}
  \label{theta}}
\end{figure}
Explicit determination of this skein element is done in \cite{kaufflinks} and \cite{Masbaum1}. We will denote this skein element by $\Lambda(m,n,k)$.
Our first immediate application of the bubble expansion formula is computing this skein element. The following lemma shows that $\left\lceil 
\begin{array}{cc}
m & n \\ 
k & k%
\end{array}%
\right\rceil _{0}$ is almost equal to the evaluation of this skein element.
\begin{lemma}
\label{thetag}
\begin{equation}
  \begin{minipage}[h]{0.125\linewidth}
       \scalebox{0.10}{\includegraphics{theta1}}
   \end{minipage}
   =\left\lceil 
\begin{array}{cc}
m & n \\ 
k & k%
\end{array}%
\right\rceil _{0}\Delta_{m+n}.
   \end{equation}
\end{lemma}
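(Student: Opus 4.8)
The plan is to read this off from the bubble expansion formula, Theorem~\ref{main}, by specializing to $l=k$ and then closing the disk into the sphere in the way that manufactures the theta graph.

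First I would set $l=k$ in Theorem~\ref{main}(1). Because the boundary colors are constrained by $m+k=m'+l$ and $n+k=n'+l$, taking $l=k$ forces $m'=m$ and $n'=n$, so we are expanding $\mathcal{B}_{m,n}^{m,n}(k,k)$, and in every summand on the right the two internal strands are colored $i$ and $k-l+i=i$. Writing $E_i\in\mathscr{D}^{m,n}_{m,n}$ for that diagram, this gives
\begin{equation*}
\mathcal{B}_{m,n}^{m,n}(k,k)=\sum_{i=0}^{\min(m,n,k)}\left\lceil\begin{smallmatrix}m & n\\ k & k\end{smallmatrix}\right\rceil_{i}\,E_i ,
\end{equation*}
with $E_0$ the diagram whose two internal strands are empty, i.e.\ (up to the four boundary idempotents) the trivial $(m+n,m+n)$-tangle.

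Next I would apply the $\mathbb{Q}(A)$-linear closure map $\gamma\colon\mathscr{D}^{m,n}_{m,n}\to\mathcal{S}(S^2)$ obtained by fusing the two idempotents along the top edge of the disk onto a single $(m+n)$-colored strand by a trivalent vertex, fusing the two idempotents along the bottom edge likewise, and joining the two resulting $(m+n)$-strands by an arc in $S^2\setminus D^2$. Unpacking the pictures of Figure~\ref{bubble} and Figure~\ref{theta}, one checks that $\gamma\big(\mathcal{B}_{m,n}^{m,n}(k,k)\big)=\Lambda(m,n,k)$. Applying $\gamma$ termwise, the lemma reduces to the two evaluations $\gamma(E_0)=\Delta_{m+n}$ and $\gamma(E_i)=0$ for $i\geq 1$. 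The first is immediate: $\gamma(E_0)$ is the closure of $f^{(m+n)}$ with the trivial tangle inserted, and by the absorption identity in~(\ref{properties}) this collapses to the closed-up idempotent $f^{(m+n)}$, whose value is $\Delta_{m+n}$. For $i\geq 1$, fusing the top idempotents of $E_i$ produces an $f^{(m+n)}$ sitting directly on top of the $2i$ strands of $E_i$ that bend around into the internal $i$-colored strand; this is a turn-back on $f^{(m+n)}$, so $\gamma(E_i)=0$ by~(\ref{properties}). Hence only the $i=0$ term survives and $\Lambda(m,n,k)=\left\lceil\begin{smallmatrix}m & n\\ k & k\end{smallmatrix}\right\rceil_{0}\Delta_{m+n}$.

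The step I expect to be the crux is the vanishing $\gamma(E_i)=0$ for $i\geq 1$, which depends on an honest description of the expansion diagram $E_i$: one must see that, once $l=k$, the two internal strands of $E_i$ force at least $i$ strands of the top $(m+n)$-bundle to turn back before they reach the bottom, so that fusing to $f^{(m+n)}$ annihilates the term. The identification $\gamma(\mathcal{B}_{m,n}^{m,n}(k,k))=\Lambda(m,n,k)$ and the computation $\gamma(E_0)=\Delta_{m+n}$ are then just bookkeeping with the identities in~(\ref{properties}) and~(\ref{properties2}).
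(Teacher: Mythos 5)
Your proposal is correct and follows essentially the same route as the paper: expand the bubble $\mathcal{B}^{m,n}_{m,n}(k,k)$ sitting inside $\Lambda(m,n,k)$ via Theorem \ref{main}, observe that every term with $i\geq 1$ dies because the $i$ turned-back strands hit the fused $f^{(m+n)}$ (the hook property in (\ref{properties})), and evaluate the surviving $i=0$ term as the closure of $f^{(m+n)}$, namely $\Delta_{m+n}$. The only difference is presentational: you phrase the closure as an explicit linear wiring map $\gamma$ and verify $\gamma(E_i)$ termwise, whereas the paper performs the same expansion directly inside the closed diagram.
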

\begin{proof} We apply the bubble skein formula (\ref{bubble expansion formula1}) on that bubble $\mathcal{B}_{m,n}^{m,n}(k,k)$ appears in Figure \ref{theta}:
\begin{align*}
  \begin{minipage}[h]{0.12\linewidth}
        \vspace{0pt}
        \scalebox{0.10}{\includegraphics{theta1}}
   \end{minipage}
   =\displaystyle\sum\limits_{i=0}^{\min(m,n,k)}
     \left\lceil 
\begin{array}{cc}
m & n \\ 
k & k%
\end{array}%
\right\rceil _{i}\hspace{3pt}
  \begin{minipage}[h]{0.12\linewidth}
        \vspace{0pt}
        \scalebox{0.10}{\includegraphics{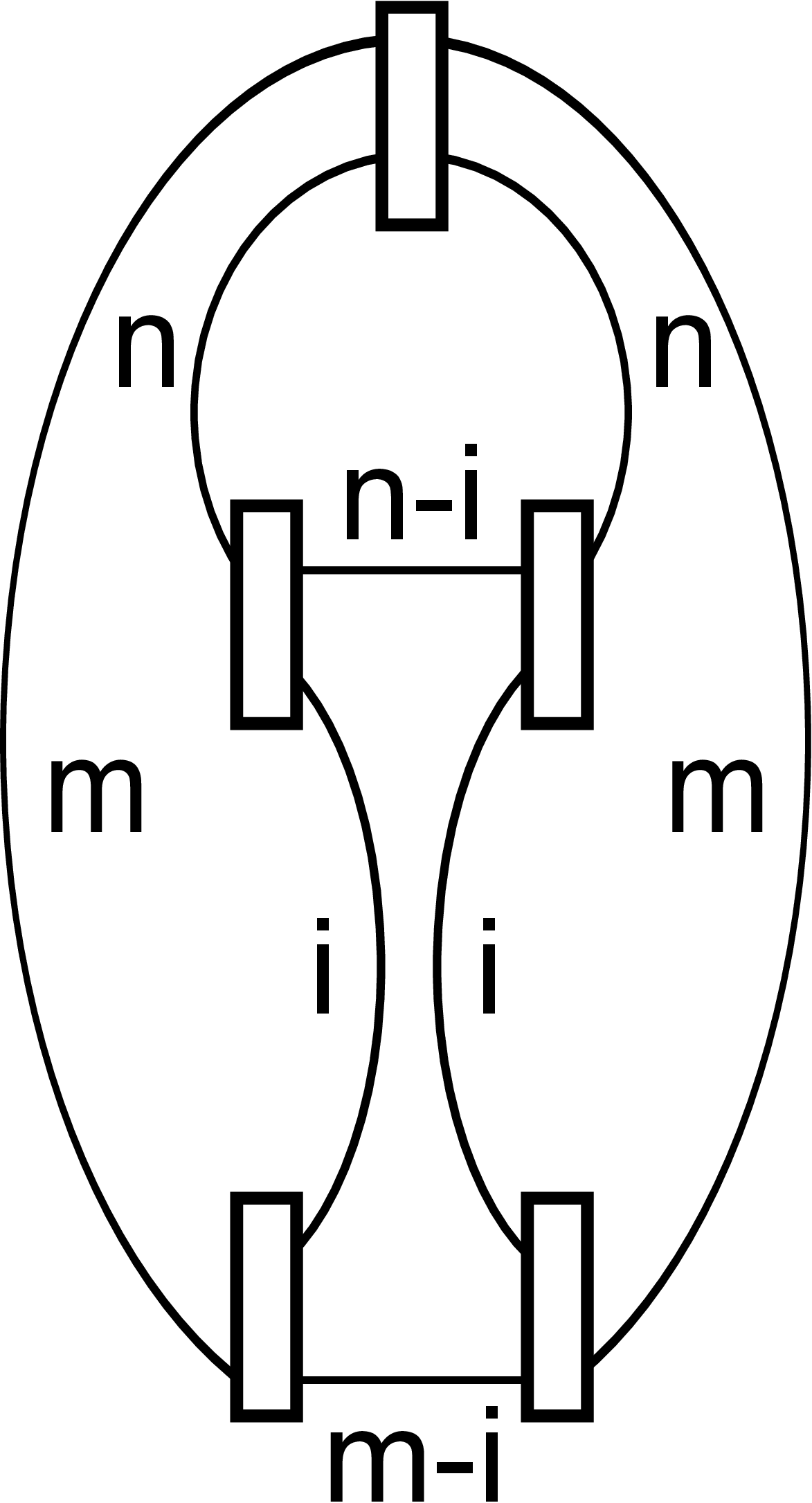}}
   \end{minipage}.
   \end{align*}
 The previous summation is zero except when $i=0$ and hence it reduces to
 \begin{align*}
 \left\lceil 
\begin{array}{cc}
m & n \\ 
k & k%
\end{array}%
\right\rceil _{0}\hspace{2pt}
  \begin{minipage}[h]{0.12\linewidth}
        \vspace{0pt}
        \scalebox{0.10}{\includegraphics{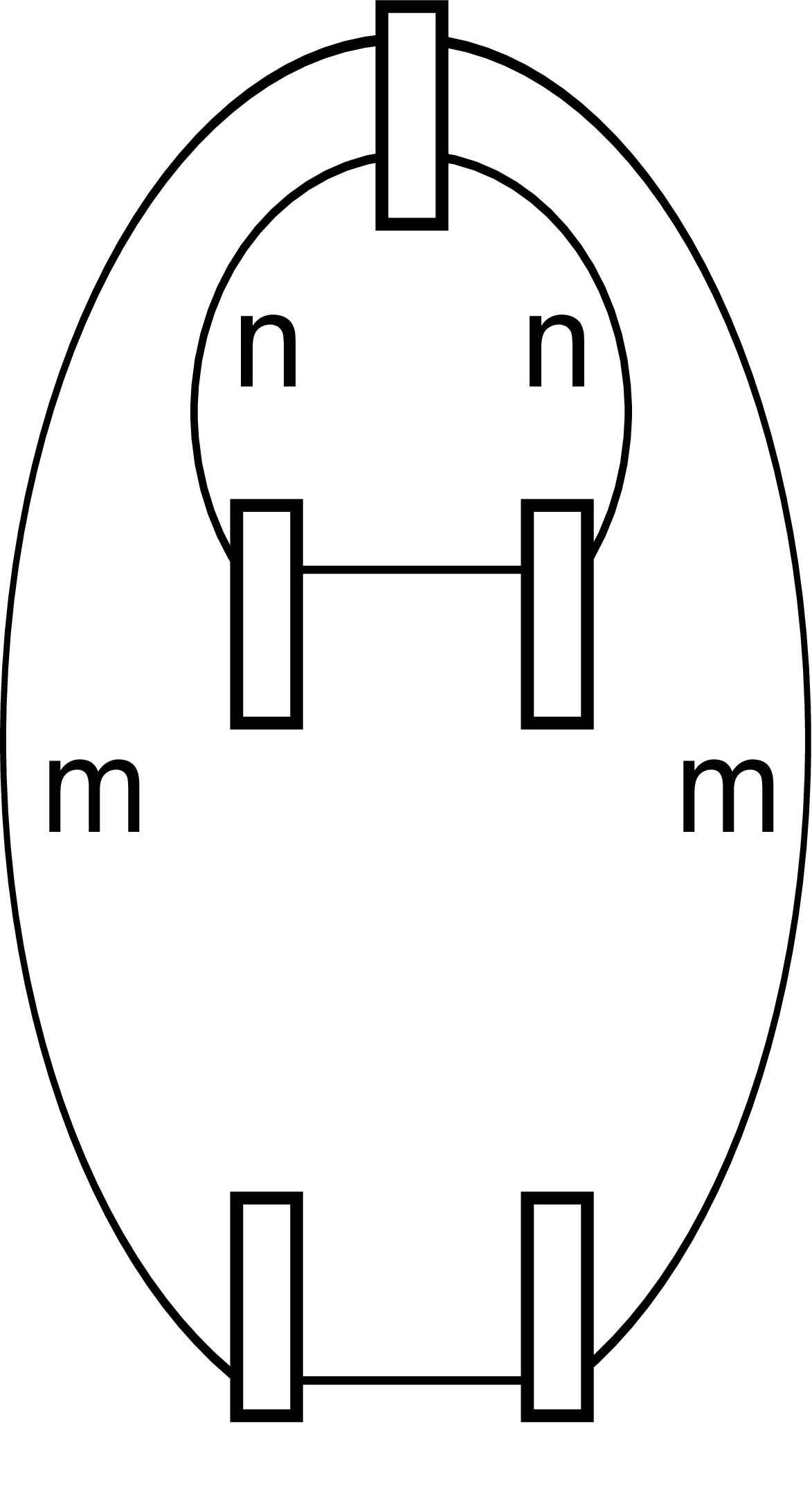}}
   \end{minipage}
   =\left\lceil 
\begin{array}{cc}
m & n \\ 
k & k%
\end{array}%
\right\rceil _{0}\Delta_{m+n}.
    \end{align*}
\end{proof}
\begin{remark}
In the previous lemma one could apply the bubble skein formula on the other bubbles in the skein element \ref{theta} and obtain
\begin{align*}
  \begin{minipage}[h]{0.12\linewidth}
        \vspace{0pt}
        \scalebox{0.10}{\includegraphics{theta1}}
   \end{minipage}
   =\left\lceil 
\begin{array}{cc}
m & n \\ 
k & k%
\end{array}%
\right\rceil _{0}\Delta_{m+n}=\left\lceil 
\begin{array}{cc}
m & k \\ 
n & n%
\end{array}%
\right\rceil _{0}\Delta_{m+k}=\left\lceil 
\begin{array}{cc}
n & k \\ 
m & m%
\end{array}%
\right\rceil _{0}\Delta_{n+k}.
   \end{align*}
 This reflects the symmetry of the theta graph. 
\end{remark}
\subsection{The head and the tail of alternating knots}
 We briefly review the basics of the head and the tail of the colored Jones polynomial. For more details see \cite{Cody} and \cite{Cody2}. Then we use the bubble expansion formula to give an example of calculation that demonstrates how one can use the bubble expansion formula to study certain skein elements in $\mathcal{S}(S^3)$ that determines the tail of a family of knots.  \\
\\ We start by recalling the definition of the unreduced colored Jones polynomial for links in $S^3$. Let $L$ be a framed link in $S^3$. Decorate every component of $L$, according to its framing, by the $n^{th}$ Jones-Wenzl idempotent and consider this decorated framed link as an element of $\mathcal{S}(S^3)$. Up to a power of $\pm A$, that depends on the framing of $L$, the value of this element is the $n^{th}$ (unreduced) colored Jones polynomial $\tilde{J}_{n,L}(A)$. In what follows we assume $A^4=q$.\\

Following \cite{Cody2}, write $P_1(q)\doteq_n P_2(q)$ ,where $P_1(q)$, $P_2(q)$ are two Laurent series if and only if $P_1(q)=q^{\pm s}P_2(q)\mod q^n$. It was proven in \cite{Cody} that the $k^{th}$ coefficient in the entire colored Jones polynomial of an alternating link $L$ does not depend on the color $n$ as long as $n \geq k$. Thus one could define a power series associated with the entire colored Jones polynomial whose $n^{th}$ coefficient is the $n^{th}$ coefficient of $\tilde{J}_{n,L}$. The tail of the colored Jones polynomial of a link $L$ is defined to be a series
$T_L(q)$, that satisfies  $T_L(q)\doteq_{n}\frac{\tilde{J}_{n,L}(q)}{\Delta_n(q)}$ for all $n$. In the same way, the head of the colored Jones polynomial of a link $L$, denoted by $H_L$, is defined to be the tail of $\frac{\tilde{J}_{n,L}(q^{-1}}{\Delta_n(q^{-1}})$. For more details see \cite{Cody}. Higher order stability of the coefficients of the colored Jones polynomial is studied by Garoufalidis and Le in \cite{klb}.\\\\ Let $L$ be a link in $S^3$ and $D$ be an alternating knot diagram of $L$. For any crossing in $D$ there are two ways to smooth this crossing, the $A$-smoothing and the $B$-smoothing (see Figure \ref{smoothings}).
\begin{figure}[H]
  \centering
   {\includegraphics[scale=0.14]{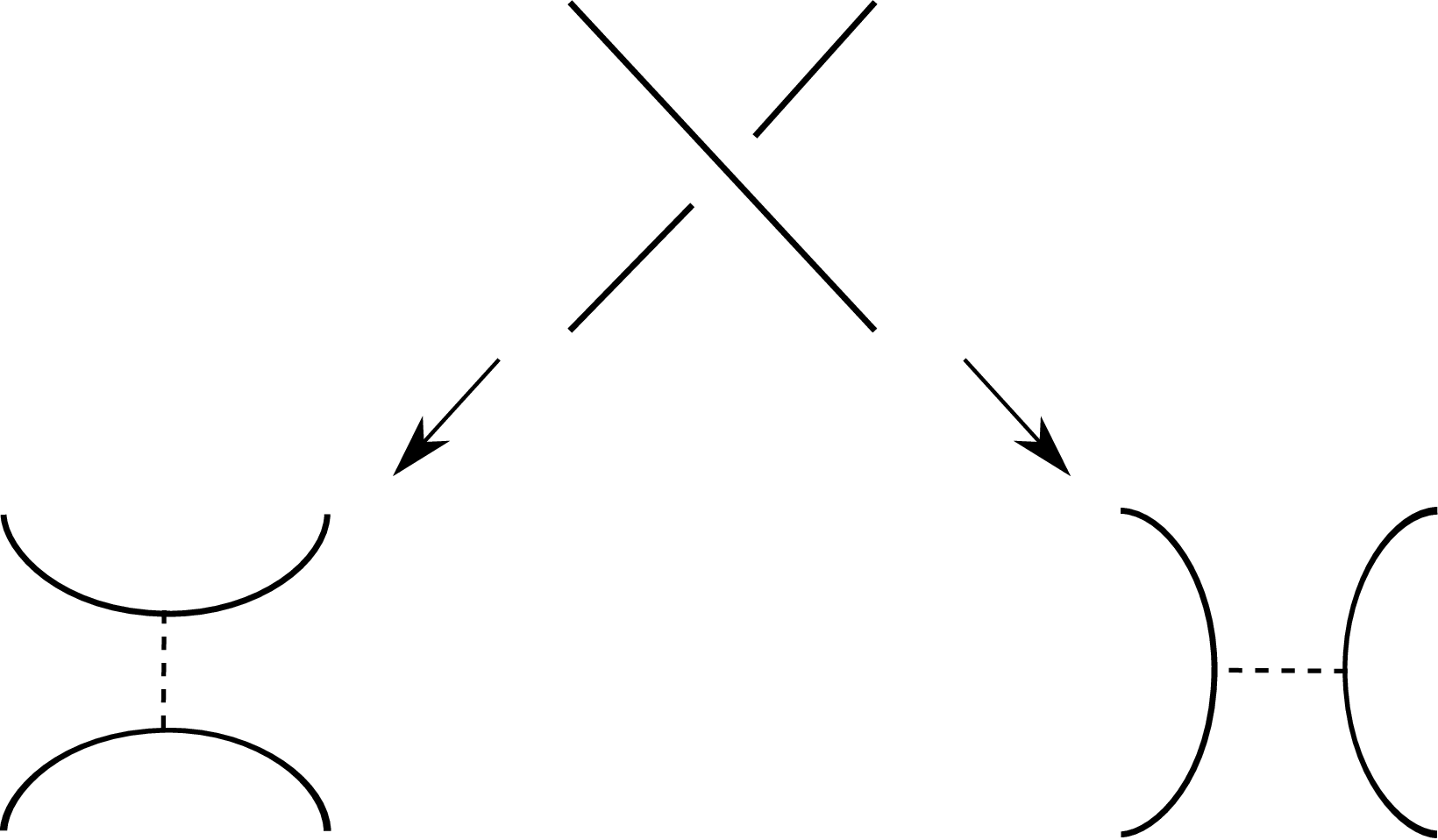}
    \put(-88,33){$A$}
          \put(-30,33){$B$}
    \caption{A and B smoothings}
  \label{smoothings}}
\end{figure}

We replace a crossing with a smoothing together with a dashed line joining the two arcs. After applying a smoothing to each crossing in $D$ we obtain diagram of a collection of disjoint circles in the plane. We call this diagram a \textit{state} for the diagram $D$. The all-$A$ smoothing state and the the all-$B$ smoothing for $D$ are of particular importance for us. The all-$A$ smoothing (all-$B$ smoothing) state of $D$ is the state obtained by replacing each crossing by a $A$ smoothing ($B$ smoothing). Write $S_A(D)$ and $S_B(D)$ to denote the all $A$ smoothing and all $B$ smoothing states of $D$ respectively.\\

If $D$ is a link diagram then the \textit{$A$-graph} $A(D)$ and the \textit{$B$-graph} $B(D)$ are two graphs associated to the all-$A$ smoothing and all-$B$ smoothing states of $D$. The set of vertices  of $A(D)$ is equal to the set of circles in $S_A(D)$. Moreover, an edge in the set of edges of $A(D)$ is obtained by joining two vertices of $A(D)$ for each crossing in $D$ between the corresponding circles.  We obtain the \textit{reduced $A$-graph} $A(D)^{\prime}$ by keeping the same set of vertices of $A(D)$ and replacing parallel edges by a single edge. See Figure \ref{thisisit} for an example. We define the $B$-graph $B(D)$ and reduced $B$-graph $B(D)^{\prime}$ similarly.
 
 \begin{figure}[H]
  \centering
    {\includegraphics[scale=0.06]{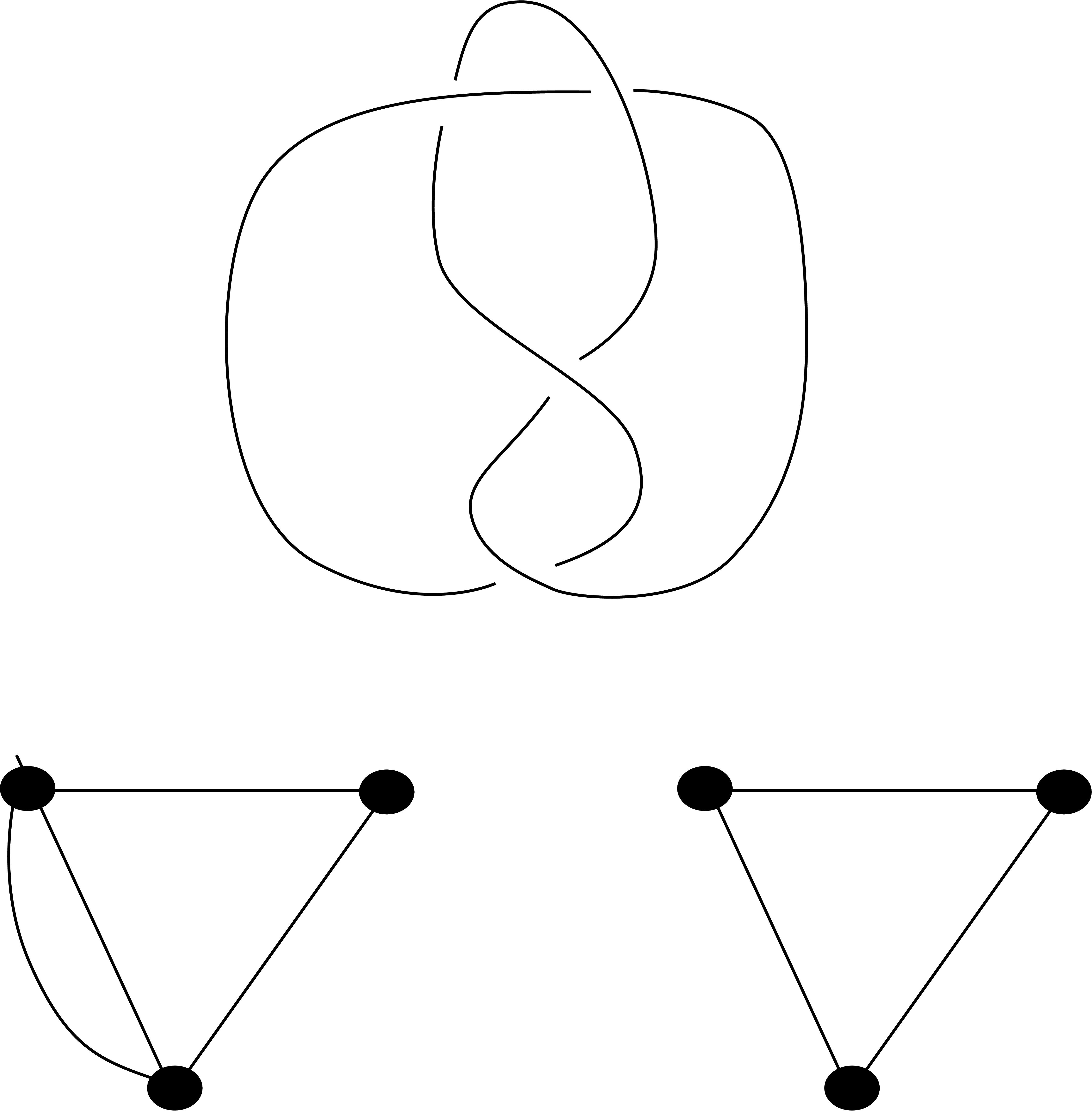}
    \caption{The knot $4_1$, its $A$-graph on the left and its reduced $A$-graph on the right}
    \label{thisisit}
 }
\end{figure} 
  
Now let $D$ be a link diagram and consider the skein element obtained from $S_B(D)$ by decorating each circle in $S_B(D)$ with the $n^{th}$ Jones-Wenzl idempotent and replacing each dashed line in $S_B(D)$ with the $(2n)^{th}$ Jones-Wenzl idempotent. See Figure \ref{allB1} for an example. Write $S_B^{(n)}(D)$ to denote this skein element.
\begin{figure}[H]
  \centering
   {\includegraphics[scale=0.08]{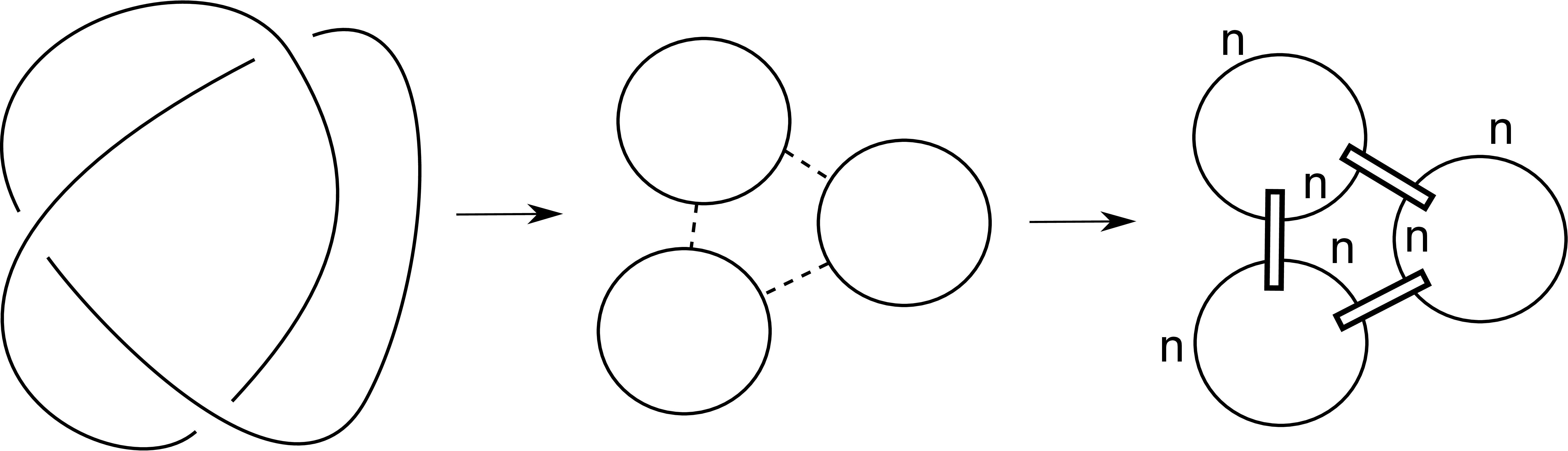}
    \caption{Obtaining $S_B^{(n)}(D)$ from a knot diagram $D$}
    \label{allB1}
 }
\end{figure}

We will need the following theorem.
\begin{theorem}(C. Armond \cite{Cody2})
\label{cody thm}
Let $L$ be a link in $S^3$ and $D$ be a reduced alternating knot diagram of $L$. Then
\begin{equation*}
\tilde{J}_{n,L}(A)\doteq_{4(n+1)}S_B^{(n)}(D).
\end{equation*} 
\end{theorem}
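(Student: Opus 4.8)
The plan is to follow the skein-theoretic strategy used by Armond and Dasbach. Start from the defining expression of the colored Jones polynomial: up to a power of $\pm A$ fixed by the writhe and framing, $\tilde{J}_{n,L}(A)$ is the Kauffman bracket of the diagram $D^{(n)}$ obtained from $D$ by replacing each strand with $n$ parallel copies and decorating each component with $f^{(n)}$; idempotency of $f^{(n)}$ lets us insert these projectors for free. First I would resolve every crossing of $D^{(n)}$ by the Kauffman relation, writing $\tilde{J}_{n,L}(A) = \sum_s A^{e(s)}\langle D_s\rangle$ as a sum over the states $s$ of $D^{(n)}$, where $D_s$ is the decorated collection of circles produced by the smoothings and $e(s)$ is the number of $A$-smoothings minus the number of $B$-smoothings. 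Each circle of $D_s$ then contributes a factor $-A^2-A^{-2}$.

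Second, I would invoke the hypothesis that $D$ is reduced and alternating: its all-$B$ state is then one of the two extreme states of the bracket expansion, and it is adequate, i.e.\ no dashed line of $S_B(D)$ has both endpoints on the same circle. This $B$-adequacy is the combinatorial engine. Let $s_B^{(n)}$ be the state of $D^{(n)}$ in which every crossing of the $n\times n$ grid sitting over each crossing of $D$ is $B$-smoothed. The core estimate is a uniform degree count: $s_B^{(n)}$ produces the maximal number of circles, every other state produces strictly fewer with a controlled deficit, and, since $e(s)$ is also controlled, every state other than $s_B^{(n)}$ (together with a narrow band of states differing from it at only a few crossings, which one must treat by expanding the $f^{(n)}$'s and using adequacy to see that their turnback terms merge distinct circles and lower the circle count) contributes only to orders in $q=A^4$ higher than $\mathrm{ord}_q$ of the $s_B^{(n)}$-contribution by more than $4(n+1)$. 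This degree bookkeeping, made uniform in $n$, is the step I expect to be the main obstacle.

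Third, I would identify the surviving $s_B^{(n)}$-contribution with $S_B^{(n)}(D)$. In $s_B^{(n)}$, each circle of $S_B(D)$ has become a circle carrying $f^{(n)}$, and at each former crossing the $n$-cables of the two arcs joined along the dashed line run side by side as a band of $2n$ strands; it remains to insert $f^{(2n)}$ along each such band at the dashed line. Writing $f^{(2n)}=\mathrm{id}+(\text{turnback terms})$, every turnback term, by $B$-adequacy, either is killed by the adjacent $f^{(n)}$'s or merges two distinct circles of the state, strictly decreasing the circle count and hence contributing only to higher $q$-order. Therefore, modulo $q^{4(n+1)}$ and up to an overall power of $q$, $\tilde{J}_{n,L}(A)$ equals the evaluation of $s_B^{(n)}$ with an $f^{(2n)}$ at each dashed line, which is precisely $S_B^{(n)}(D)$. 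Finally I would note that the writhe/framing normalization is an overall monomial in $q$ and so is absorbed by $\doteq_{4(n+1)}$, completing the proof.
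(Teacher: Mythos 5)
First, a point of order: the paper does not prove this statement at all --- it is quoted as Theorem~\ref{cody thm} with attribution to Armond \cite{Cody2}, and is used as a black box. So there is no internal proof to compare against; what you have written is an outline of the external argument of \cite{Cody2} (and of the closely related arguments in \cite{Cody}), and your general route --- cable the diagram, decorate with $f^{(n)}$, isolate the all-$B$ contribution, and use adequacy of the reduced alternating diagram to push everything else beyond the first $4(n+1)$ coefficients --- is indeed the strategy of the original proof.

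However, as a proof your proposal has a genuine gap, and it is exactly where you flag it: the uniform-in-$n$ degree estimate is not an obstacle to be expected, it is the entire content of the theorem, and nothing in your sketch actually establishes it. Two specific points are missing. (i) You treat the idempotents as ``identity plus turnback terms'' and argue that turnback terms merge circles, but the coefficients of those turnback terms are rational functions of $A$ with negative minimal degree; to conclude that such terms only affect coefficients past $4(n+1)$ you need quantitative control of the minimal $q$-degree of each coefficient of $f^{(n)}$ and $f^{(2n)}$ in the Temperley--Lieb basis, together with a count of how much the circle number drops. This is a substantive lemma in \cite{Cody2}, not a remark. (ii) Your first step expands all $n^2$ cabled crossings over each crossing of $D$ into Kauffman states and asserts that every state other than the all-$B$ cable state, ``together with a narrow band of states differing at only a few crossings,'' is negligible; but the number of such states grows without bound with $n$, so one must bound the minimal degree of each state's contribution by a function of its distance from the all-$B$ state and verify that the resulting lower bound beats the minimal degree of the all-$B$ term by more than $4(n+1)$ uniformly. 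Armond avoids this bookkeeping by expanding each cabled crossing region via the standard formula into diagrams indexed by the number of strands turning back (with $f^{(2n)}$ carrying the leading term), and then invoking adequacy once per crossing; if you want the brute-force state-sum route, you must supply the corresponding uniform estimates yourself. Until (i) and (ii) are carried out, the argument is a correct roadmap but not a proof.
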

Theorem \ref{cody thm} has an important consequence, which basically tells us that the tail (the head) of an alternating link only depends on the reduced $A$-graph (reduced $B$-graph):  
\begin{theorem}(C. Armond, O. Dasbach \cite{Cody})
\label{cody thm2}
Let $L_1$ and $L_2$ be two alternating links with alternating diagrams $D_1$ and $D_2$. If the graph $A(D_1)^{\prime}$ coincides with $A(D_2)^{\prime}$, then $T_{L_1}=T_{L_2}$. Similarly, if $B(D_1)^{\prime}$ coincides with $B(D_2)^{\prime}$, then $H_{L_1}=H_{L_2}$.
\end{theorem}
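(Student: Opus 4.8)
The plan is to deduce this from Armond's Theorem~\ref{cody thm} together with the definition of the tail (the original argument is in \cite{Cody}; the bubble expansion formula of Theorem~\ref{main} is what makes the main local step transparent). By Theorem~\ref{cody thm}, $\tilde J_{n,L}(A)\doteq_{4(n+1)}S_B^{(n)}(D)$ for any reduced alternating diagram $D$ of $L$, and $T_L(q)\doteq_n\tilde J_{n,L}(q)/\Delta_n(q)$; since dividing by $\Delta_n$ only shifts degrees, it is enough to show that $S_B^{(n)}(D)$, read modulo $q^{4(n+1)}$, is determined by $A(D)'$ alone. The statement about heads then follows by applying this to mirror diagrams, using $S_B(\bar D)=S_A(D)$ (so $B(\bar D)'=A(D)'$) and the fact that $H_L$ is the tail of the mirror of $L$. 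Now $A(D)'$ remembers the circles of the all-$A$ state and which pairs are joined by a twist region but collapses each twist region whose crossings are parallel in the all-$A$ state to a single edge; hence two alternating diagrams with the same reduced $A$-graph differ, up to flypes --- which preserve the link and therefore the tail --- only in the number of crossings in such twist regions. So the whole statement reduces to: changing one such twist region of $D$ from $t$ to $t+1$ crossings does not change $S_B^{(n)}(D)$ modulo $q^{4(n+1)}$.

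This local move is the heart of the argument. A twist region whose crossings are parallel in the all-$A$ state becomes, in the all-$B$ state, a cup--cap tangle together with a chain of $t-1$ small circles; once the circles are decorated by $f^{(n)}$ and the former crossings by $f^{(2n)}$, the portion of $S_B^{(n)}(D)$ in a disk neighborhood of this twist region is a definite element of a relative skein module of the disk with four clusters of $n$ points --- after an isotopy, exactly a chain of bubble skein elements of the sort treated in Sections~\ref{section3}--\ref{section4}. Expanding it via the recursions of Section~\ref{section3}, equivalently via the bubble expansion formula of Theorem~\ref{main}, rewrites it as a $\mathbb Q(A)$-linear combination of the configurations in which the twist region has been shrunk to a single edge carrying a projector of color $2n,\,2n-2,\,2n-4,\dots$. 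The color-$2n$ term reproduces exactly the diagram with the twist region replaced by one crossing, and its coefficient is $\doteq_{4(n+1)}1$; the lower-color terms carry coefficients assembled, as in Corollary~\ref{coeffff}, from ratios of the type $\beta_{m,n}^{k}=\Delta_{m-1}\Delta_{n-1}/(\Delta_{m+k-1}\Delta_{n+k-1})$.

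It then remains to see that the lower-color terms vanish modulo $q^{4(n+1)}$. When such a term is closed up inside the knot diagram and evaluated in $\mathcal S(S^{3})$, dropping the color of an edge below $2n$ forces turn-backs in the adjacent $2n$-cables, and the minimal-degree estimate for adequate diagrams --- the same mechanism that underlies Theorem~\ref{cody thm}, see \cite{Cody} --- shows that each such turn-back raises the lowest $q$-degree enough that the combined contribution of these terms is $0\bmod q^{4(n+1)}$. Iterating the local move collapses every relevant twist region to a single crossing without changing $S_B^{(n)}(D)\bmod q^{4(n+1)}$, and one further estimate of the same kind --- sliding $f^{(2n)}$-boxes past one another costs high $q$-degree --- shows that the residue of the fully collapsed element modulo $q^{4(n+1)}$ depends only on $A(D)'$ as an abstract graph, not on its planar embedding. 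Combined with the first paragraph this gives $T_{L_1}=T_{L_2}$, and the mirror argument gives $H_{L_1}=H_{L_2}$.

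The step I expect to be the real obstacle is exactly this $q$-degree bookkeeping: one must show, uniformly in $n$, that every ``error'' configuration produced by the expansion --- an edge of color below $2n$, or a rearrangement of the $2n$-projectors --- contributes only in degrees $\ge 4(n+1)$. Routing the local reduction through the bubble expansion formula keeps the algebra clean, but these degree estimates carry the genuine content and have to be carried out with care; they are of the same nature as the ones behind Theorem~\ref{cody thm} itself.
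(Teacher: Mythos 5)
This theorem is not proved in the paper at all --- it is quoted from Armond--Dasbach \cite{Cody} --- so there is no internal argument to compare yours against; judged on its own terms, your outline follows the general Armond--Dasbach strategy (reduce to showing the low-order part of $S_B^{(n)}(D)$ sees only the reduced graph, handle a multi-edge by a local skein expansion, control everything else by degree estimates), but as written it has genuine gaps. The most serious one is the combinatorial reduction in your first paragraph: equality of the reduced graphs $A(D_1)'=A(D_2)'$ does \emph{not} imply that $D_1$ and $D_2$ differ, up to flypes, only in the lengths of twist regions. First, parallel edges of $A(D)$ need not come from a single twist region --- two circles of the all-$A$ state can be joined by crossings at widely separated places in the diagram --- so ``multi-edge $=$ twist region'' is false. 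Second, and more fundamentally, $L_1$ and $L_2$ are in general different links, and two alternating diagrams sharing a reduced graph need not be related by any sequence of twist-length changes and flypes; the whole force of the theorem is that \emph{everything} in the diagram other than the reduced graph is irrelevant to the tail. So the theorem cannot be reduced, as you do, to the single local move ``$t$ crossings $\to$ $t+1$ crossings in a twist region''; one has to argue directly on the skein element $S_B^{(n)}(D)$ that deleting one of two parallel $2n$-boxes joining the same pair of circles (wherever they sit), and then passing from the resulting plane graph to the abstract reduced graph, does not affect the first $4(n+1)$ coefficients. Your last step (``sliding $f^{(2n)}$-boxes past one another costs high $q$-degree'') is exactly this second, embedding-independence statement, and it is only asserted.

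The second gap you name yourself: all of the $q$-degree estimates --- that the lower-color terms produced by the bubble expansion, and the rearrangement errors, contribute only in degrees $\geq 4(n+1)$, uniformly in $n$ --- are never carried out, and appealing to ``the same mechanism as Theorem~\ref{cody thm}'' is not a proof; note that in the paper's own use of the bubble expansion (the $8_5$ family) \emph{all} colors in the expansion contribute to the tail, so which terms survive depends delicately on how the local configuration is closed up in the diagram, and a blanket ``only the top color survives'' claim needs justification. A smaller but real bookkeeping issue: the paper's own computation ties the tail of $\Gamma$ to the reduced $B$-graph via $S_B^{(n)}$, while the theorem as stated (and your reduction ``$S_B^{(n)}(D)$ is determined by $A(D)'$'') attaches the tail to $A(D)'$; whichever convention one adopts, you need to spell out the checkerboard duality that converts a multi-edge of one state graph into a chain of circles of the other before the bubble machinery of Sections~\ref{section3}--\ref{section4} applies. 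In short: plausible outline, essentially the published strategy, but the two load-bearing steps (the reduction to a local move and the degree estimates) are respectively incorrect as stated and missing.
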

Finding an exact form for the head and tail series is an interesting task. Explicit calculations were done on the knot table to determine these two power series in \cite{Cody}. Using multiple techniques Armond and Dasbach determined the head and tail for an infinite family of knots and links. The knot $8_5$, Figure \ref{85knot}, is the first knot on the knot table whose tail could not be determined by a direct application of techniques in \cite{Cody}.
\begin{figure}[H]
  \centering
 {\includegraphics[scale=0.055]{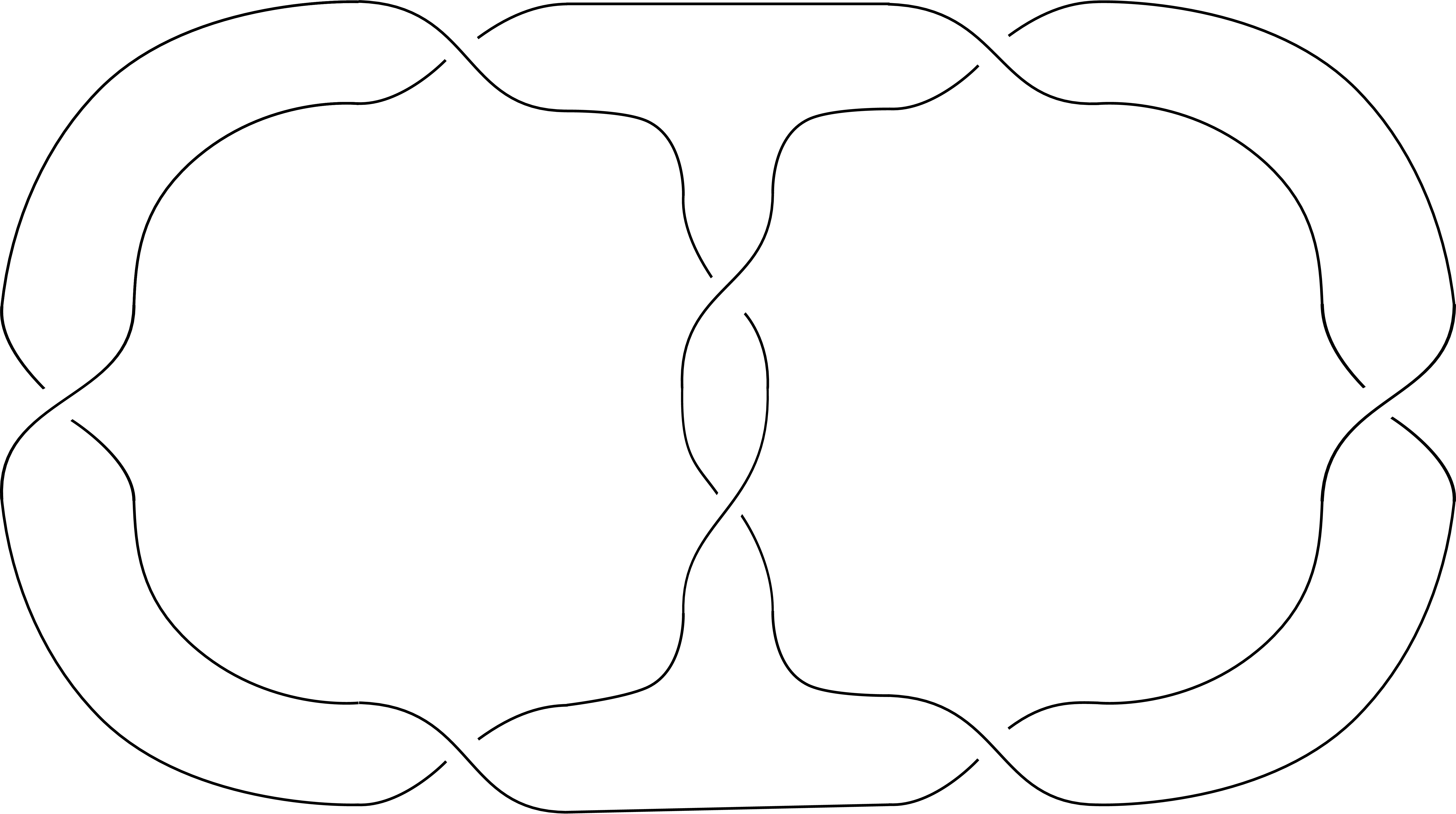}
  \caption{The knot $8_5$}
  \label{85knot}}
\end{figure}
 The techniques we developed here appear to be helpful in understanding the head and the tail for some knots. We demonstrate this by studying the family of knots in Figure \ref{85 gen knot}. Note that we obtain the knot $8_5$ from this family by replacing each crossing region by a single crossing.

\begin{figure}[H]
  \centering
 {\includegraphics[scale=0.065]{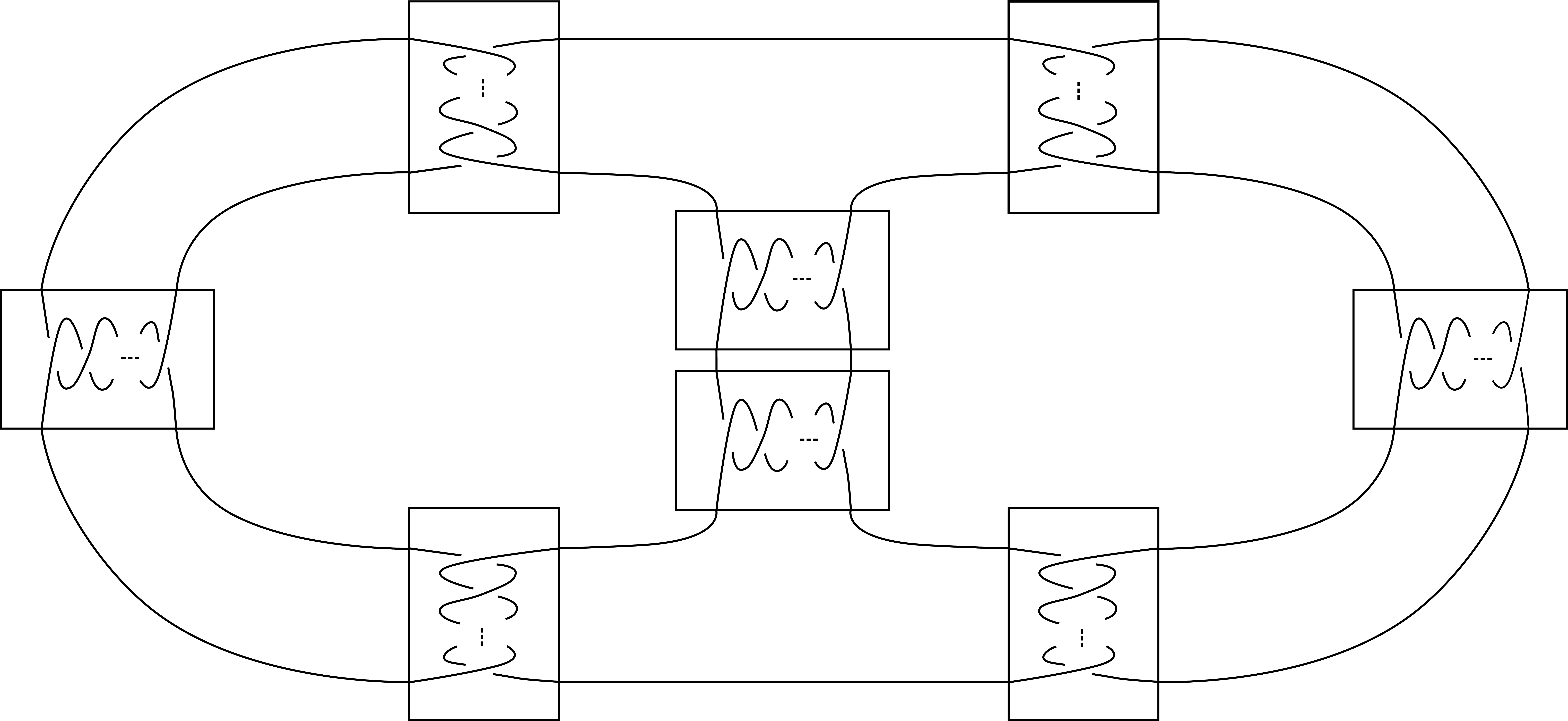}
  \caption{The knot $\Gamma$}
  \label{85 gen knot}}
\end{figure}
The reduced $B$-graph for each knot in this family can be easily seen to coincide with the graph in Figure \ref{graph}.
\begin{figure}[H]
  \centering
 {\includegraphics[scale=0.04]{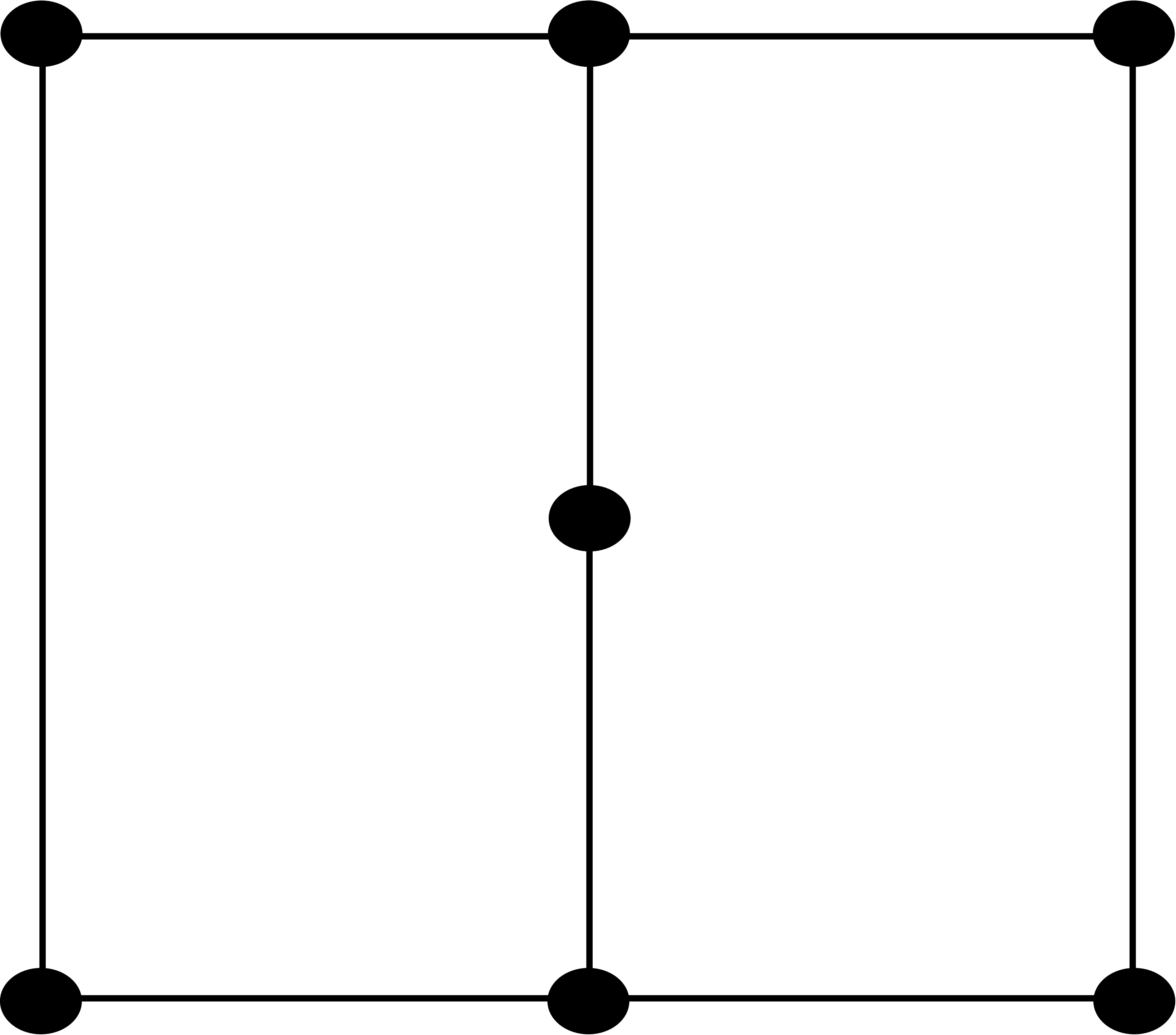}
  \caption{The reduced $B$-graph for $\Gamma$}
  \label{graph}}
\end{figure}
Hence the skein element $S_B^{(n)}$ for any knot from the family of the knots shown in \ref{85 gen knot} is given in Figure \ref{85}. Note that the bubble skein element appears in multiple places in this skein element. 
\begin{figure}[H]
  \centering
   {\includegraphics[scale=0.08]{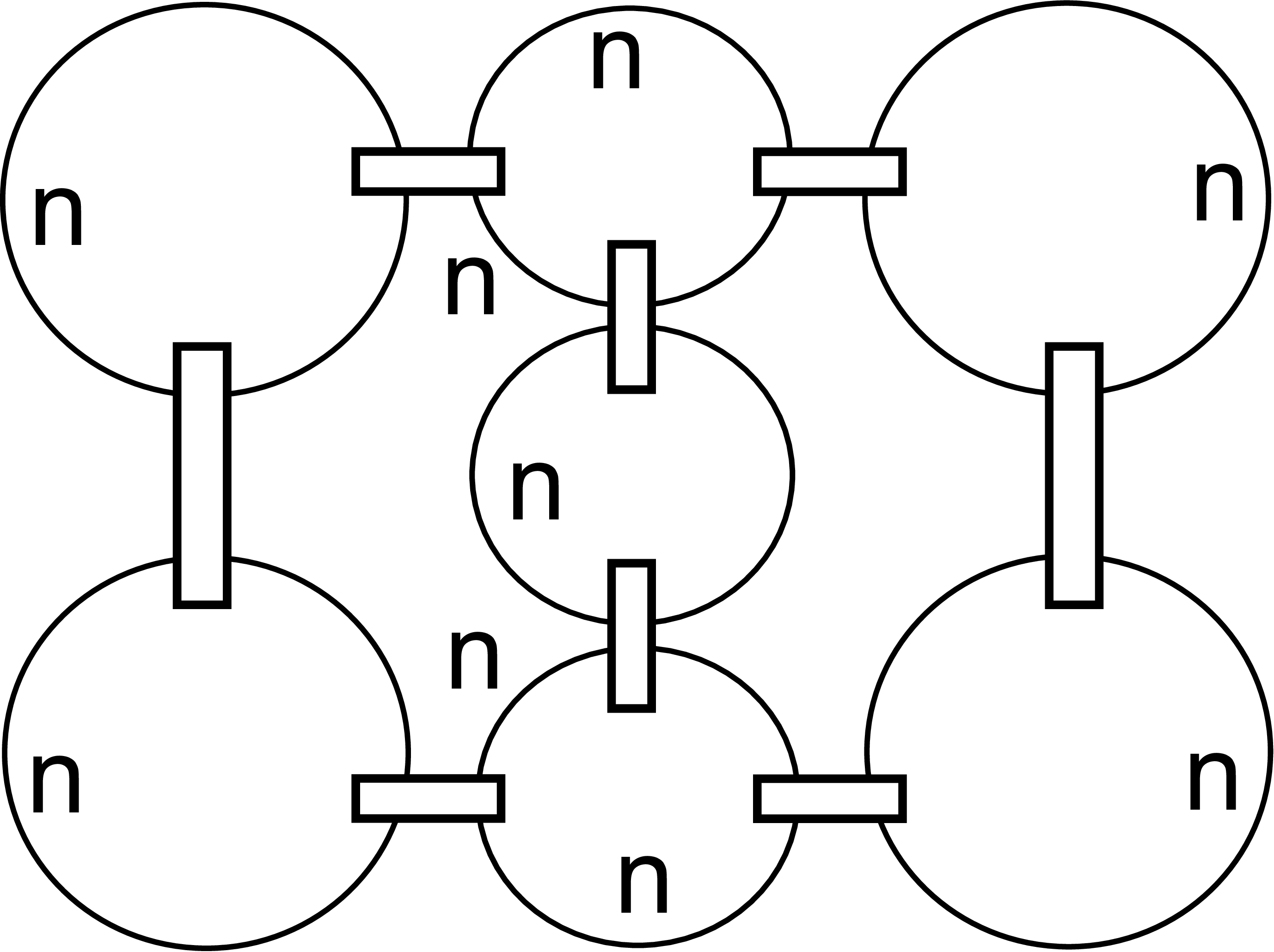}
  \caption{The skein element $S_B^{(n)}(\Gamma)$}
  \label{85}}
\end{figure}

Let $\Gamma$ be any knot from the family in Figure \ref{85 gen knot}. Theorem \ref{cody thm} implies
{\small
\begin{eqnarray*}
  \tilde{J}_{n,\Gamma}(A)\doteq_{4(n+1)} \begin{minipage}[h]{0.24\linewidth}
        \vspace{0pt}
        \scalebox{0.09}{\includegraphics{tailmain}}
   \end{minipage}.
  \end{eqnarray*}}
  \begin{remark}
  The algorithm of Masbaum and Vogel in \cite{Masbaum1} can be used to compute the evaluation of any quantum spin network in $\mathcal{S}(S^2)$. In particular, it can be used to give a formula for the skein element $S_B^{(n)}(\Gamma)$. However, it is difficult to compute the tail of $\Gamma$ using the formula obtained from this algorithm. For this reason, we will use the techniques we developed here to compute the evaluation of the skein element $S_B^{(n)}(\Gamma)$.
  \end{remark}
  Now we compute the tail of the knot $\Gamma$. 
\begin{lemma}
{\footnotesize
\begin{equation*}
\label{mainfor}
  \tilde{J}_{n,\Gamma}(A)
   \doteq_{4(n+1)} \displaystyle\sum\limits_{i=0}^n\ \sum\limits_{j=0}^n\left\lceil 
\begin{array}{cc}
n & n \\ 
n & n%
\end{array}%
\right\rceil _{i}\left\lceil 
\begin{array}{cc}
n & n \\ 
n & n%
\end{array}%
\right\rceil _{j}\left\lceil 
\begin{array}{cc}
i & n \\ 
n & n-j%
\end{array}%
\right\rceil _{0}\left\lceil 
\begin{array}{cc}
j & n \\ 
n & n-i%
\end{array}%
\right\rceil _{0}\left\lceil 
\begin{array}{cc}
j & i \\ 
n & n%
\end{array}%
\right\rceil _{0}\frac{\Delta_{2n}}{\Delta_{n+i}}\frac{\Delta_{2n}}{\Delta_{n+j}}\Delta_{i+j}
\end{equation*}
}
\end{lemma}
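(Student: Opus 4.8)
The plan is to evaluate the skein element $S_B^{(n)}(\Gamma)\in\mathcal{S}(S^2)\cong\mathbb{Q}(A)$ pictured in Figure \ref{85} directly by linear skein manipulations, and then to invoke Theorem \ref{cody thm}, which gives $\tilde{J}_{n,\Gamma}(A)\doteq_{4(n+1)}S_B^{(n)}(\Gamma)$. Because the evaluation is a rational function and the assertion is only modulo $\doteq_{4(n+1)}$, any overall power of $A$ collected along the way is harmless, so it is enough to reduce the diagram to a scalar expression.

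First I would locate the two ``outer'' bubble skein elements of Figure \ref{85}; coming from the reduced $B$-graph of Figure \ref{graph}, each of these is of the form $\mathcal{B}^{n,n}_{n,n}(n,n)$, sitting on a $2n$-colored edge between two $n$-colored circles. Applying the bubble expansion formula (\ref{bubble expansion formula1}) of Theorem \ref{main} to each of them — both have $k=l=n$, so each sum runs over $0\le i\le n$ — rewrites $S_B^{(n)}(\Gamma)$ as a double sum over indices $i$ and $j$, with coefficients $\left\lceil\begin{array}{cc} n & n \\ n & n\end{array}\right\rceil_i$ and $\left\lceil\begin{array}{cc} n & n \\ n & n\end{array}\right\rceil_j$, multiplying a diagram $D_{i,j}$ in which each of the two bubbles has been replaced by its $i$-th (respectively $j$-th) expansion term, each carrying a new strand colored $i$ (respectively $j$).

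Next I would simplify each $D_{i,j}$. After the first expansion, $D_{i,j}$ still contains bubble skein elements, but now with parameters depending on $i$ and $j$; absorbing the $i$- and $j$-colored strands into the neighbouring Jones–Wenzl idempotents exhibits bubbles of the types $\mathcal{B}$ with top clusters $(i,n)$ over $(n,n-j)$, $(j,n)$ over $(n,n-i)$, and $(j,i)$ over $(n,n)$. Applying (\ref{bubble expansion formula1}) — equivalently, Lemma \ref{thetag} — to each of these, the surrounding idempotents kill all summands except the $0$-indexed one: a higher term would force a turnback against a Jones–Wenzl projector, which vanishes by (\ref{properties}), or a strand of negative color. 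This leaves exactly the coefficients $\left\lceil\begin{array}{cc} i & n \\ n & n-j\end{array}\right\rceil_0$, $\left\lceil\begin{array}{cc} j & n \\ n & n-i\end{array}\right\rceil_0$ and $\left\lceil\begin{array}{cc} j & i \\ n & n\end{array}\right\rceil_0$. Once every bubble has been resolved, the residual closed diagram is a disjoint union of a theta graph together with a pair of nested loops: the theta graph contributes the factor $\Delta_{i+j}$ (this is the $\Delta_{m+n}$ of Lemma \ref{thetag} with $m+n=i+j$), while the two loops are removed via the loop-reduction identity (\ref{properties2}), producing the factors $\Delta_{2n}/\Delta_{n+i}$ and $\Delta_{2n}/\Delta_{n+j}$.

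Collecting the coefficients and delta-factors generated by these reductions, summed over $0\le i,j\le n$, yields exactly the claimed double sum, and Theorem \ref{cody thm} then finishes the proof. The main obstacle is the diagrammatic bookkeeping in the second step: one must draw the intermediate pictures carefully enough to check that after expanding the outer bubbles the residual diagram is genuinely a union of bubbles of precisely the stated types, that the off-diagonal terms in the subsequent expansions vanish for the reasons indicated, and that the $\mathcal{S}(S^2)$-reductions combine the delta-factors as claimed. None of this is conceptually deep, but it is where all the care is needed.
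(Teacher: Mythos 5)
Your proposal is correct and takes essentially the same route as the paper: reduce to evaluating $S_B^{(n)}(\Gamma)$ via Theorem \ref{cody thm}, expand the two $\mathcal{B}^{n,n}_{n,n}(n,n)$ bubbles by Theorem \ref{main} to get the double sum over $i,j$, note that the three remaining bubble expansions contribute only their $0$-indexed terms, and finish with the loop reduction (\ref{properties2}) giving $\Delta_{2n}/\Delta_{n+i}$, $\Delta_{2n}/\Delta_{n+j}$ and the theta-type evaluation giving $\Delta_{i+j}$. The only difference is the order of bookkeeping (the paper interleaves loop reductions between successive bubble expansions rather than expanding both outer bubbles first), which is immaterial.
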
  
\begin{proof}
  We use the bubble expansion formula on the top left bubble in the skein element $S_B^{(n)}(\Gamma)$ we obtain: 
{\small
\begin{eqnarray*}
  \tilde{J}_{n,\Gamma}(A)
   &\doteq_{4(n+1)} &\displaystyle\sum\limits_{i=0}^n
   \left\lceil 
\begin{array}{cc}
n & n \\ 
n & n%
\end{array}%
\right\rceil _{i}\hspace{1mm}
       \begin{minipage}[h]{0.34\linewidth}
        \vspace{0pt}
        \scalebox{0.09}{\includegraphics{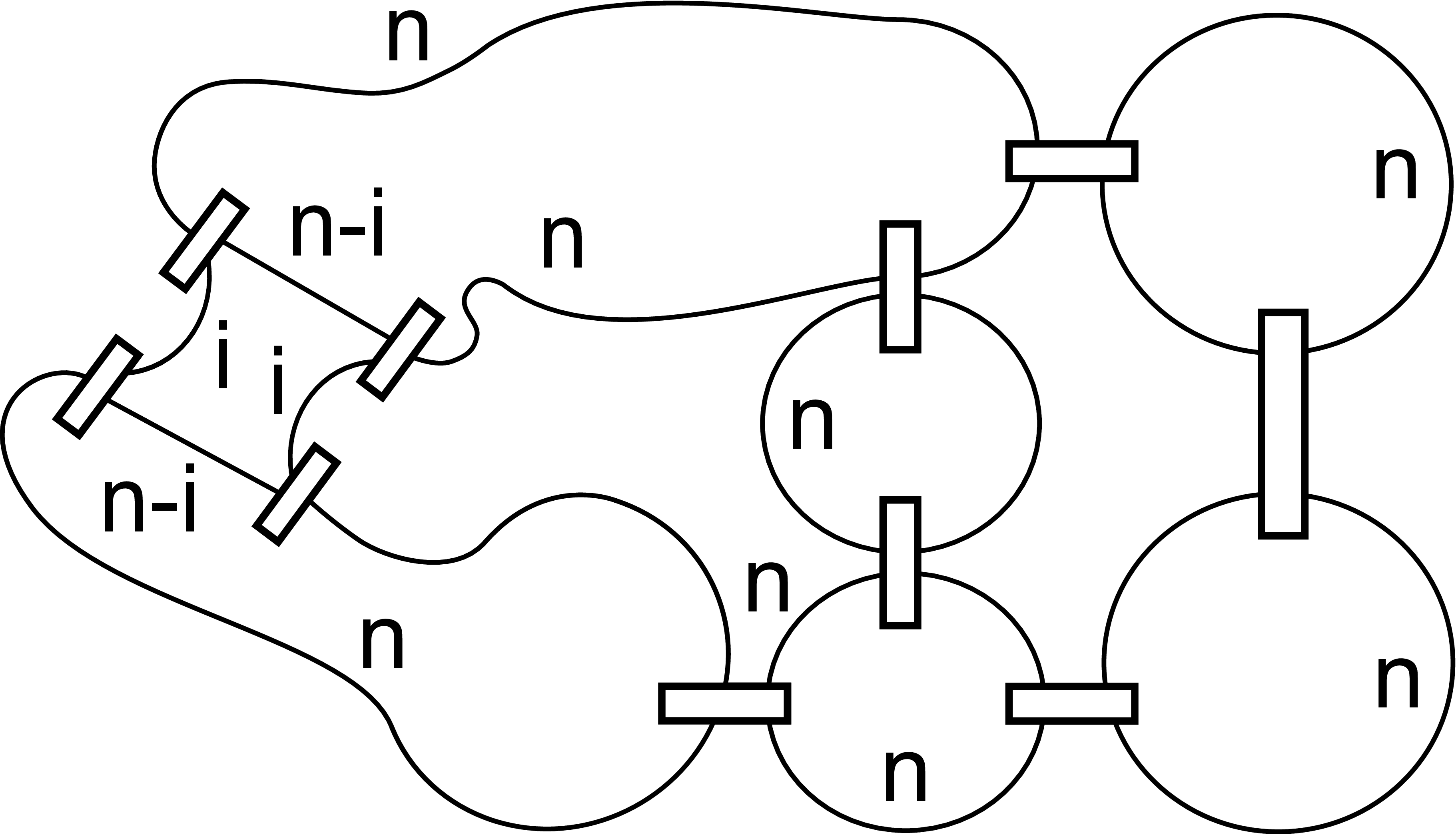}}
   \end{minipage}.\end{eqnarray*}}
Using properties of the Jones-Wenzl projector 
   {\small
\begin{eqnarray*}
 \tilde{J}_{n,\Gamma}(A)
   &\doteq_{4(n+1)}\displaystyle\sum\limits_{i=0}^n
   \left\lceil 
\begin{array}{cc}
n & n \\ 
n & n%
\end{array}%
\right\rceil _{i}\hspace{1mm}
  \begin{minipage}[h]{0.29\linewidth}
        \vspace{0pt}
        \scalebox{0.09}{\includegraphics{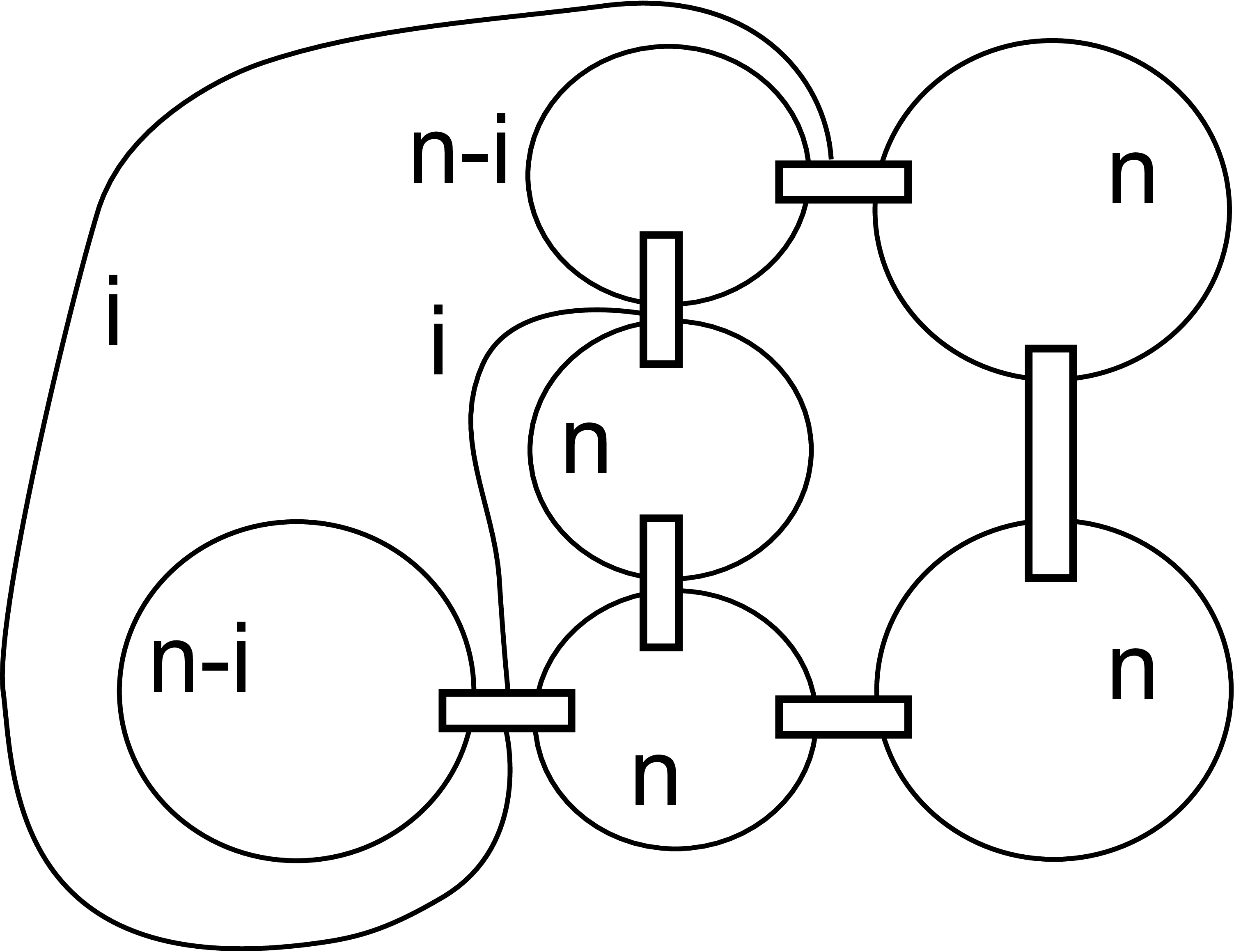}}
   \end{minipage}.\\
\end{eqnarray*}}
Using the identity (\ref{properties2}) 
{\small
\begin{eqnarray*}
  \tilde{J}_{n,\Gamma}(A)
   &\doteq_{4(n+1)}\displaystyle\sum\limits_{i=0}^n\left\lceil 
\begin{array}{cc}
n & n \\ 
n & n%
\end{array}%
\right\rceil _{i}\frac{\Delta_{2n}}{\Delta_{n+i}}\hspace{2mm}
   \begin{minipage}[h]{0.24\linewidth}
        \vspace{0pt}
        \scalebox{0.1}{\includegraphics{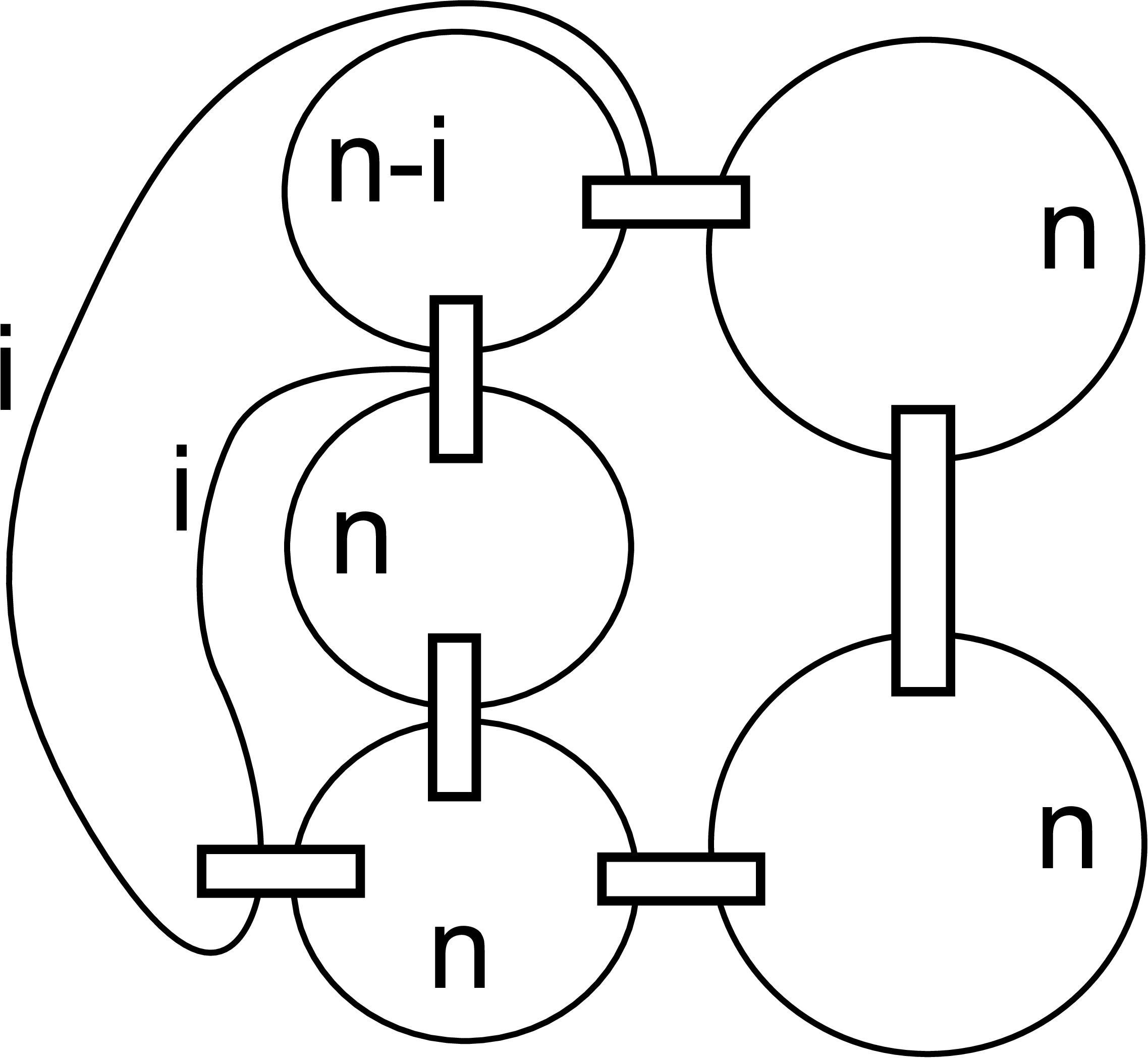}}
   \end{minipage}.
  \end{eqnarray*}}
   Now apply the bubble expansion formula to the lower mostright bubble 
   {\footnotesize
\begin{eqnarray*}
  \tilde{J}_{n,\Gamma}(A)
   &\doteq_{4(n+1)} &
   \displaystyle\sum\limits_{i=0}^n\ \sum\limits_{j=0}^n\left\lceil 
\begin{array}{cc}
n & n \\ 
n & n%
\end{array}%
\right\rceil _{i}\left\lceil 
\begin{array}{cc}
n & n \\ 
n & n%
\end{array}%
\right\rceil _{j}\frac{\Delta_{2n}}{\Delta_{n+i}}\hspace{2mm}
  \begin{minipage}[h]{0.24\linewidth}
        \vspace{0pt}
        \scalebox{0.1}{\includegraphics{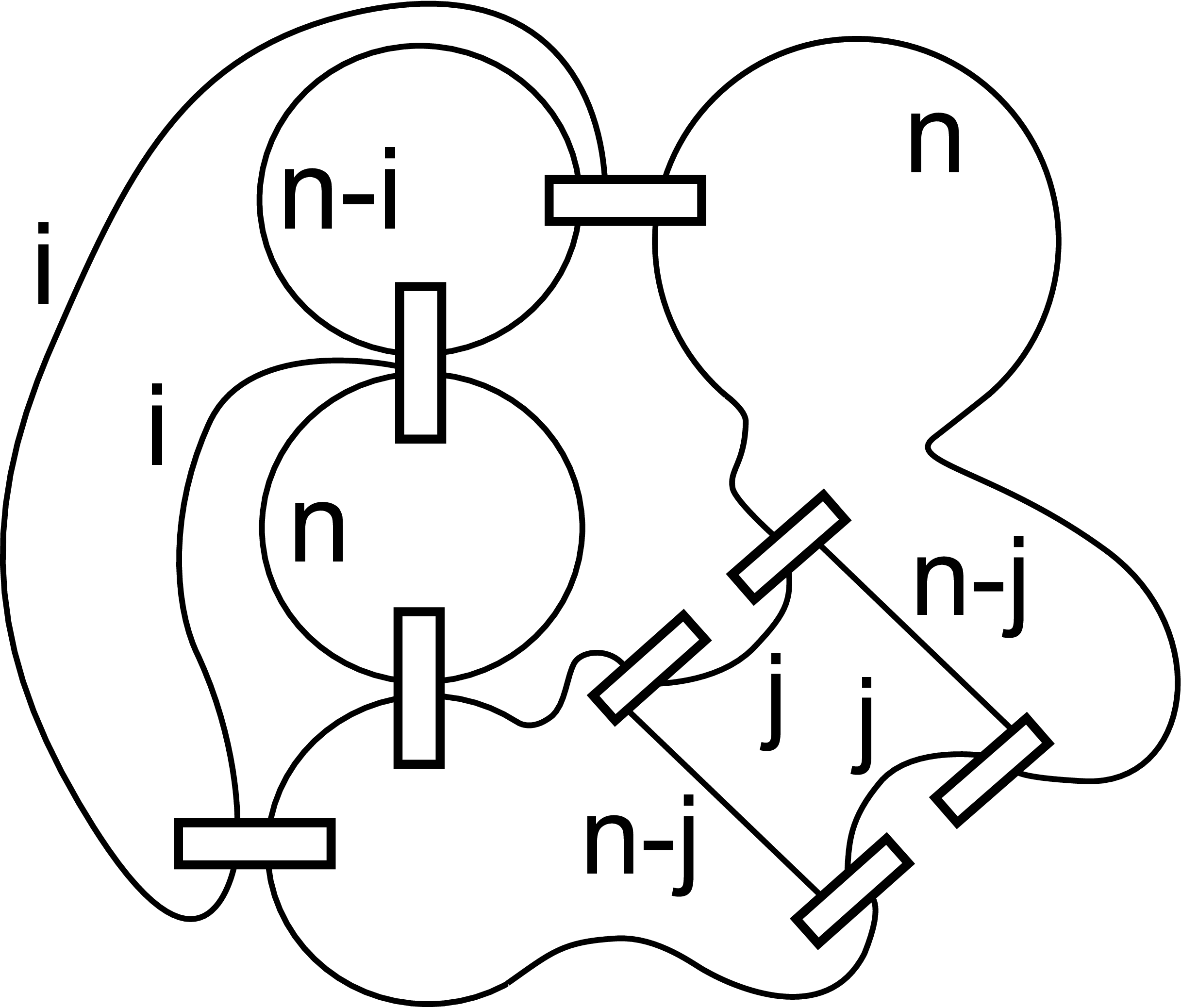}}
   \end{minipage}\\&=&\displaystyle\sum\limits_{i=0}^n\ \sum\limits_{j=0}^n\left\lceil 
\begin{array}{cc}
n & n \\ 
n & n%
\end{array}%
\right\rceil _{i}\left\lceil 
\begin{array}{cc}
n & n \\ 
n & n%
\end{array}%
\right\rceil _{j}\frac{\Delta_{2n}}{\Delta_{n+i}}\frac{\Delta_{2n}}{\Delta_{n+j}}\hspace{2mm}
  \begin{minipage}[h]{0.17\linewidth}
        \vspace{0pt}
        \scalebox{0.1}{\includegraphics{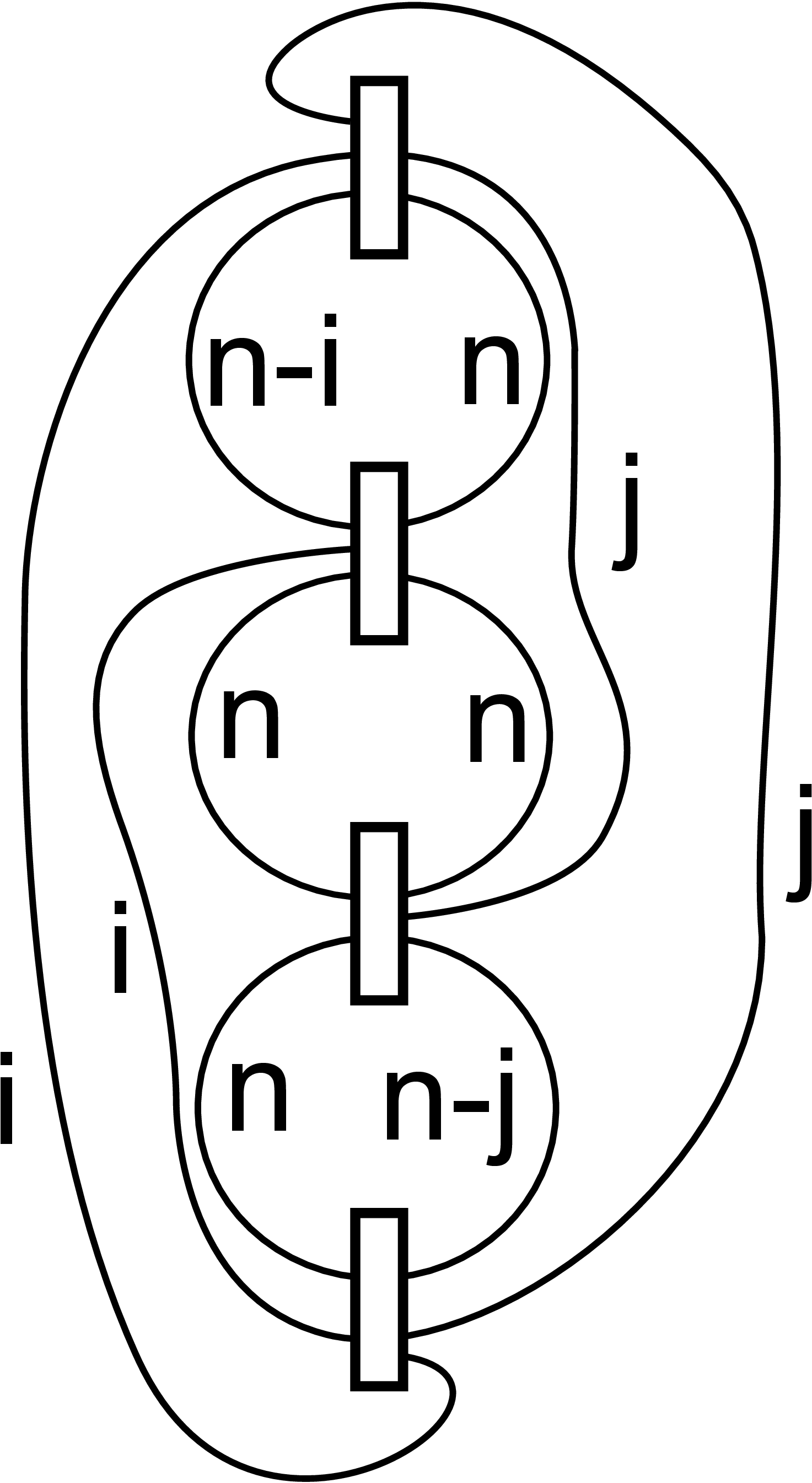}}
        \end{minipage}.
        \end{eqnarray*}}
        
 Similarly, we apply the bubble expansion formula on the top bubble that appears in the previous equation
 
 {\footnotesize
\begin{eqnarray*}
  \tilde{J}_{n,\Gamma}(A)
   &\doteq_{4(n+1)}&     
        \displaystyle\sum\limits_{i=0}^n\ \sum\limits_{j=0}^n\sum\limits_{k=0}^{\min(j,n-i)}\left\lceil 
\begin{array}{cc}
n & n \\ 
n & n%
\end{array}%
\right\rceil _{i}\left\lceil 
\begin{array}{cc}
n & n \\ 
n & n%
\end{array}%
\right\rceil _{j}\left\lceil 
\begin{array}{cc}
j & n \\ 
n & n-i%
\end{array}%
\right\rceil _{k}\\&\times&\frac{\Delta_{2n}}{\Delta_{n+i}}\frac{\Delta_{2n}}{\Delta_{n+j}}\hspace{2mm}
  \begin{minipage}[h]{0.24\linewidth}
        \vspace{0pt}
        \scalebox{0.11}{\includegraphics{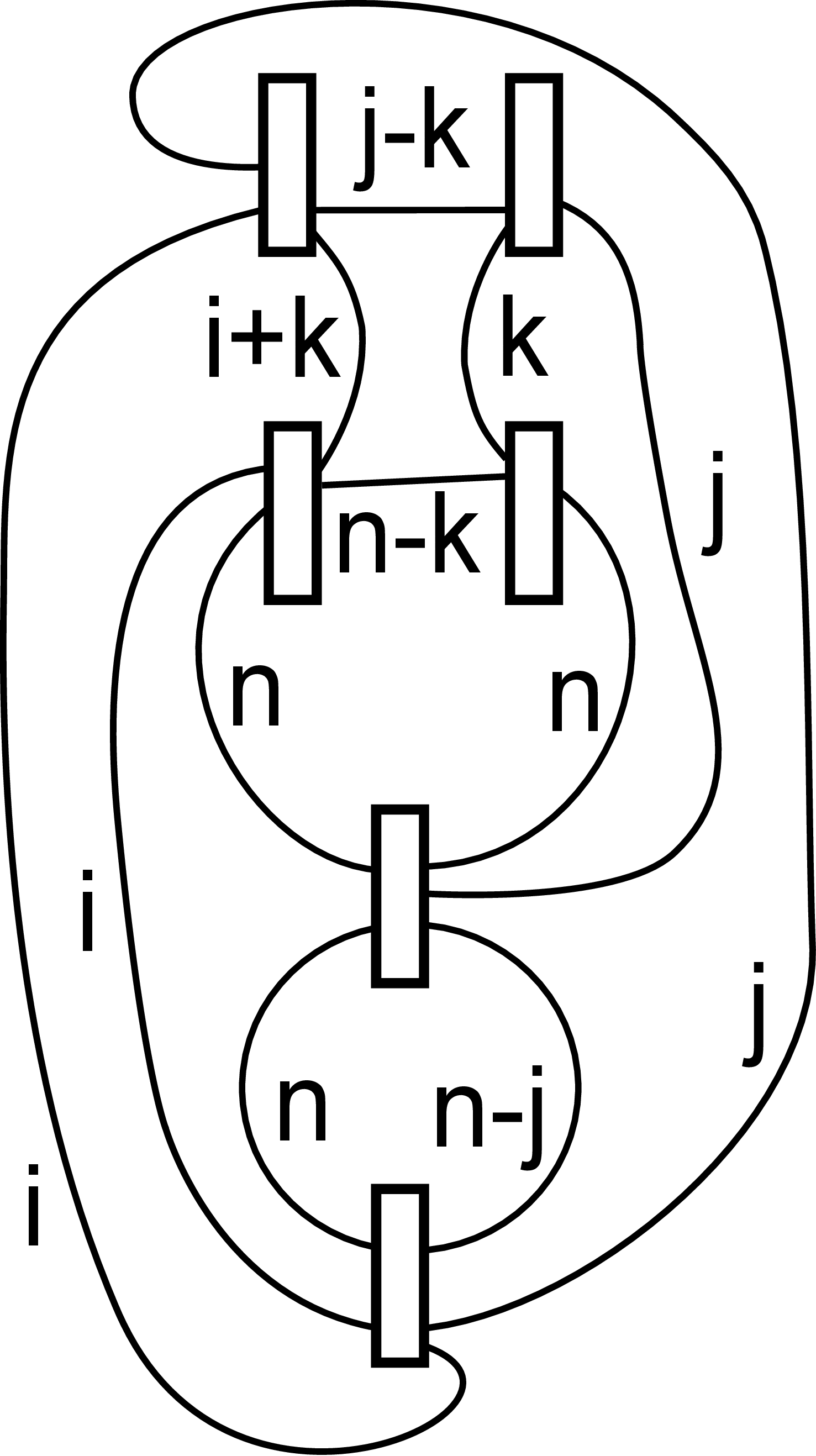}}
        \end{minipage}\\&=&\displaystyle\sum\limits_{i=0}^n\ \sum\limits_{j=0}^n\sum\limits_{k=0}^{\min(j,n-i)}\left\lceil 
\begin{array}{cc}
n & n \\ 
n & n%
\end{array}%
\right\rceil _{i}\left\lceil 
\begin{array}{cc}
n & n \\ 
n & n%
\end{array}%
\right\rceil _{j}\left\lceil 
\begin{array}{cc}
j & n \\ 
n & n-i%
\end{array}%
\right\rceil _{k}\\ &\times & \frac{\Delta_{2n}}{\Delta_{n+i}}\frac{\Delta_{2n}}{\Delta_{n+j}}\hspace{2mm}
  \begin{minipage}[h]{0.24\linewidth}
        \vspace{+2pt}
        \scalebox{0.11}{\includegraphics{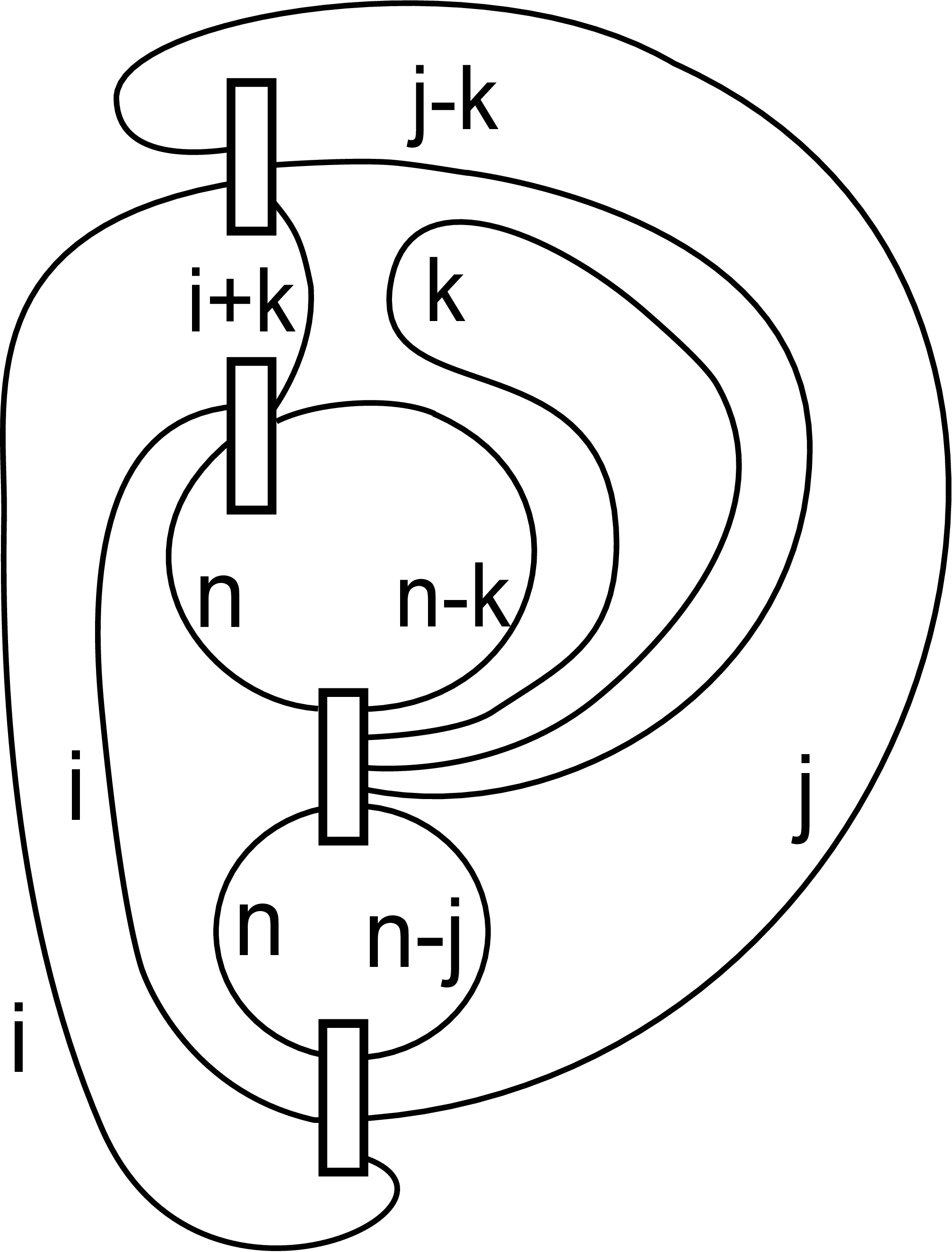}}
        \end{minipage}.
   \end{eqnarray*}
  }
Note that the previous sum is zero unless $k=0$. Hence
{\footnotesize
\begin{eqnarray*}
  \tilde{J}_{n,\Gamma}(A)
   &\doteq_{4(n+1)} &\displaystyle\sum\limits_{i=0}^n\ \sum\limits_{j=0}^n\left\lceil 
\begin{array}{cc}
n & n \\ 
n & n%
\end{array}%
\right\rceil _{i}\left\lceil 
\begin{array}{cc}
n & n \\ 
n & n%
\end{array}%
\right\rceil _{j}\left\lceil 
\begin{array}{cc}
j & n \\ 
n & n-i%
\end{array}%
\right\rceil _{0}\frac{\Delta_{2n}}{\Delta_{n+i}}\frac{\Delta_{2n}}{\Delta_{n+j}}\hspace{2mm}
  \begin{minipage}[h]{0.19\linewidth}
        \vspace{+2pt}
        \scalebox{0.1}{\includegraphics{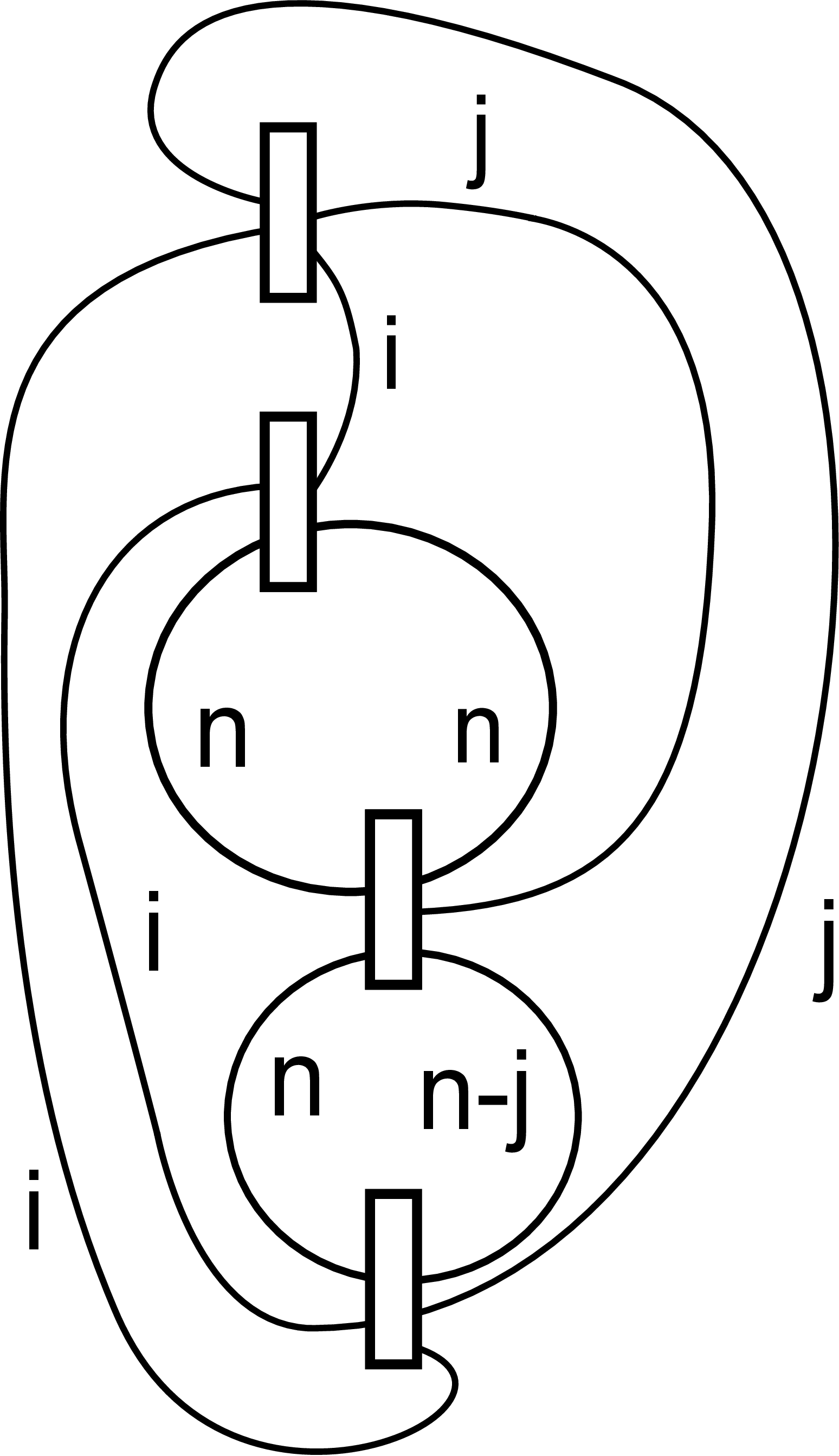}}
        \end{minipage}.
       \end{eqnarray*}}
       Similarly 
 {\footnotesize
\begin{eqnarray*}
\tilde{J}_{n,\Gamma}(A)
   &\doteq_{4(n+1)} &\displaystyle\sum\limits_{i=0}^n\ \sum\limits_{j=0}^n\left\lceil 
\begin{array}{cc}
n & n \\ 
n & n%
\end{array}%
\right\rceil _{i}\left\lceil 
\begin{array}{cc}
n & n \\ 
n & n%
\end{array}%
\right\rceil _{j}\left\lceil 
\begin{array}{cc}
i & n \\ 
n & n-j%
\end{array}%
\right\rceil _{0}\left\lceil 
\begin{array}{cc}
j & n \\ 
n & n-i%
\end{array}%
\right\rceil _{0}\frac{\Delta_{2n}}{\Delta_{n+i}}\frac{\Delta_{2n}}{\Delta_{n+j}}\hspace{2mm}
   \begin{minipage}[h]{0.20\linewidth}
        \vspace{0pt}
        \scalebox{0.09}{\includegraphics{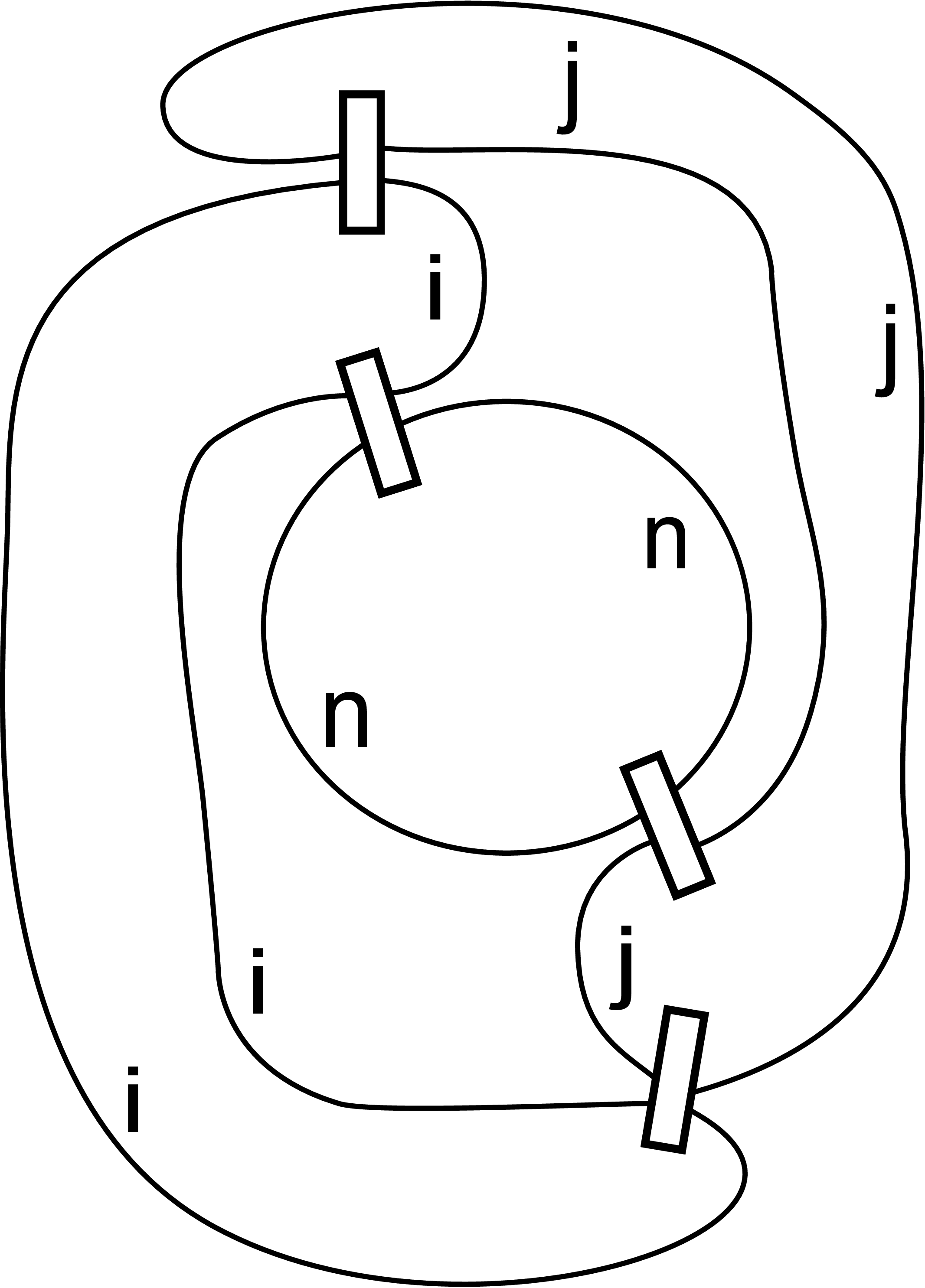}}
   \end{minipage}.
   \end{eqnarray*}}
Applying the bubble expansion one last time on the middle bubble in previous summation, we obtain 
   {\footnotesize
\begin{eqnarray*}
   \tilde{J}_{n,\Gamma}(A)
   &\doteq_{4(n+1)}&\displaystyle\sum\limits_{i=0}^n\ \sum\limits_{j=0}^n\sum\limits_{k=0}^{\min(i,j)}\left\lceil 
\begin{array}{cc}
n & n \\ 
n & n%
\end{array}%
\right\rceil _{i}\left\lceil 
\begin{array}{cc}
n & n \\ 
n & n%
\end{array}%
\right\rceil _{j}\left\lceil 
\begin{array}{cc}
i & n \\ 
n & n-j%
\end{array}%
\right\rceil _{0}\left\lceil 
\begin{array}{cc}
j & n \\ 
n & n-i%
\end{array}%
\right\rceil _{0}\left\lceil 
\begin{array}{cc}
j & i \\ 
n & n%
\end{array}%
\right\rceil _{k}
\\&\times& \frac{\Delta_{2n}}{\Delta_{n+i}}\frac{\Delta_{2n}}{\Delta_{n+j}}\hspace{2mm}
   \begin{minipage}[h]{0.24\linewidth}
        \vspace{0pt}
        \scalebox{0.09}{\includegraphics{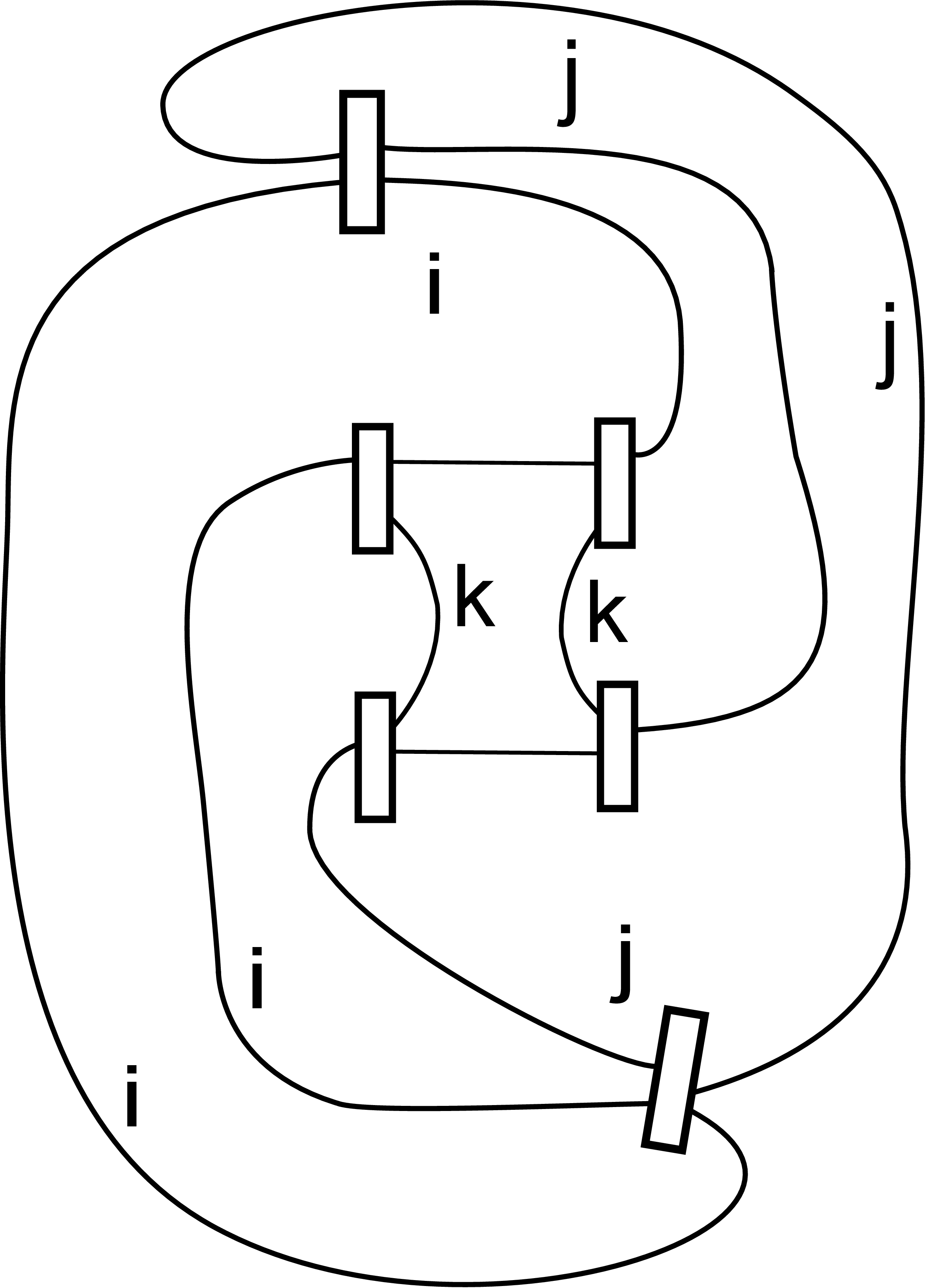}}
   \end{minipage}.\end{eqnarray*}
}
The previous summation is zero unless $k=0$. Hence
{\small
\begin{eqnarray*}
    \tilde{J}_{n,\Gamma}(A)
   &\doteq_{4(n+1)}&\displaystyle\sum\limits_{i=0}^n\ \sum\limits_{j=0}^n\left\lceil 
\begin{array}{cc}
n & n \\ 
n & n%
\end{array}%
\right\rceil _{i}\left\lceil 
\begin{array}{cc}
n & n \\ 
n & n%
\end{array}%
\right\rceil _{j}\left\lceil 
\begin{array}{cc}
i & n \\ 
n & n-j%
\end{array}%
\right\rceil _{0}\left\lceil 
\begin{array}{cc}
j & n \\ 
n & n-i%
\end{array}%
\right\rceil _{0}\left\lceil 
\begin{array}{cc}
j & i \\ 
n & n%
\end{array}%
\right\rceil _{0}\\ &\times & \frac{\Delta_{2n}}{\Delta_{n+i}}\frac{\Delta_{2n}}{\Delta_{n+j}}\hspace{2mm}
   \begin{minipage}[h]{0.24\linewidth}
        \vspace{0pt}
        \scalebox{0.09}{\includegraphics{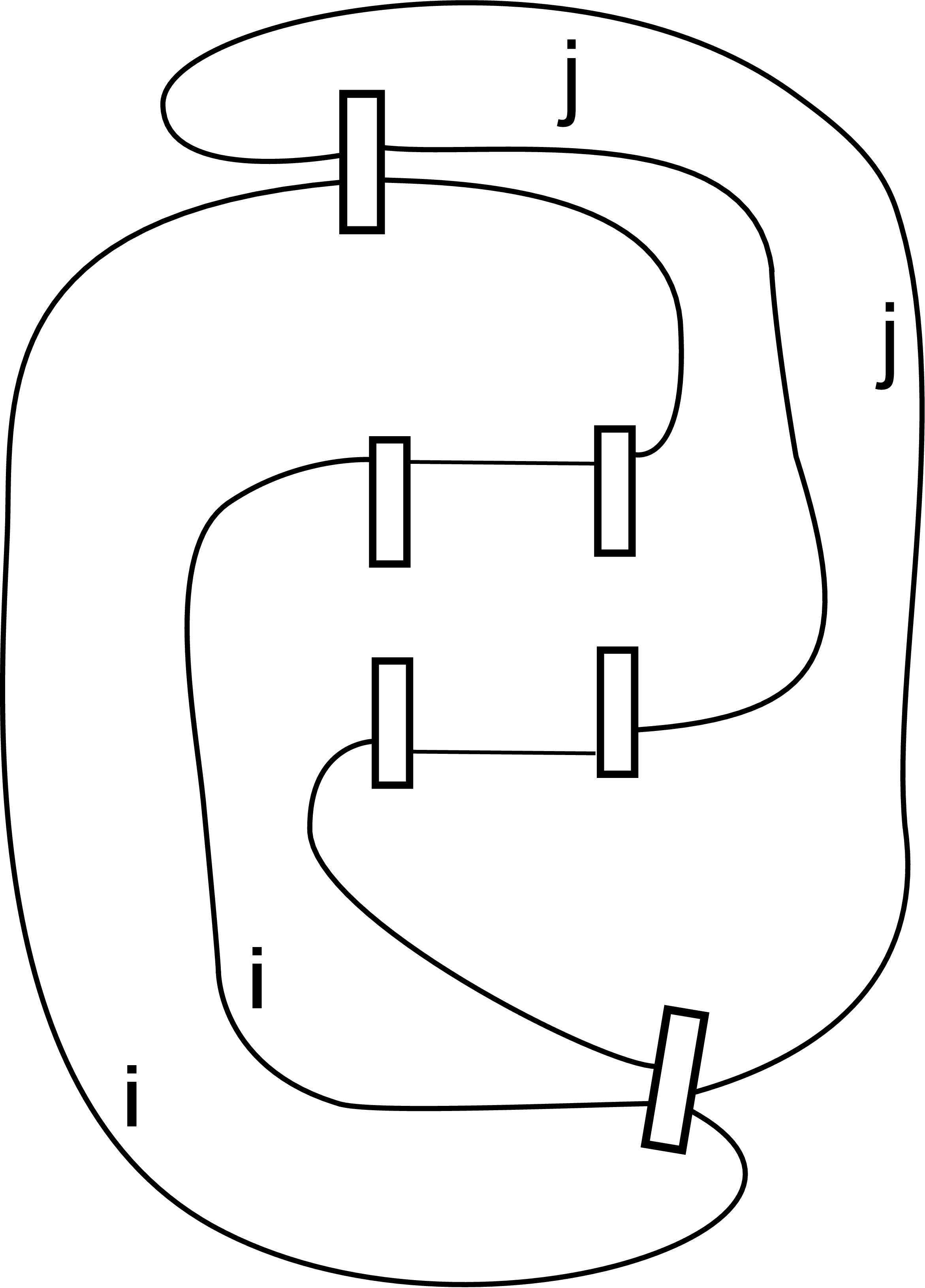}}
   \end{minipage}.
\end{eqnarray*}
}
which implies that
{\footnotesize
\begin{equation}
\label{mainfor}
  \tilde{J}_{n,\Gamma}(A)
   \doteq_{4(n+1)} \displaystyle\sum\limits_{i=0}^n\ \sum\limits_{j=0}^n\left\lceil 
\begin{array}{cc}
n & n \\ 
n & n%
\end{array}%
\right\rceil _{i}\left\lceil 
\begin{array}{cc}
n & n \\ 
n & n%
\end{array}%
\right\rceil _{j}\left\lceil 
\begin{array}{cc}
i & n \\ 
n & n-j%
\end{array}%
\right\rceil _{0}\left\lceil 
\begin{array}{cc}
j & n \\ 
n & n-i%
\end{array}%
\right\rceil _{0}\left\lceil 
\begin{array}{cc}
j & i \\ 
n & n%
\end{array}%
\right\rceil _{0}\frac{\Delta_{2n}}{\Delta_{n+i}}\frac{\Delta_{2n}}{\Delta_{n+j}}\Delta_{i+j}.
\end{equation}
}

\end{proof}
In the next proposition we work with the variable $q$. Recall that $A^4=q$.
\begin{proposition}
\begin{eqnarray*}
T_{\Gamma}(q)=(q;q)_{\infty}^2\sum\limits_{k=0}^{\infty}\frac{q^{k+k^{2}}}{%
(q;q)_{k}}(\sum\limits_{i=0}^{k}q^{(-2i(k-i))}\left[ 
\begin{array}{c}
k \\ 
i%
\end{array}%
\right] _{q}^{2}).
\end{eqnarray*}
\end{proposition}
\begin{proof}
Using Corollary \ref{coeffff} and the fact that
\begin{equation}
\label{fact}
\prod\limits_{i=0}^{j}[n-i]=q^{(2 + 3 j + j^2 - 2 n - 2 j n)/4} (1 - q)^{-1 - j}\frac{(q;q)_n}{(q;q)_{n-j-1}}.
\end{equation}

One obtains:
\begin{equation}
\label{mn}
\left\lceil 
\begin{array}{cc}
n & n \\ 
n & n%
\end{array}%
\right\rceil _{i}=(-1)^{i+n}q^{
 (2 i + 4 i^2 - 2 n)/4} 
 \frac{(q; q)^6_n(q;q)_{3n-i+1}}{(q;q)_{2 n}^2 (q ;q)_{2 n + 1} (q;q)_i^2 (q;q)_{n - i}^3},
\end{equation}
and
\begin{equation}
\label{mn1}
\left\lceil 
\begin{array}{cc}
j & n \\ 
n & n-i%
\end{array}%
\right\rceil _{0}=(-1)^{n-i} q^{(i - n)/2}  
 \frac{(q; q)_{i+j}(q;q)_{n}(q;q)_{n+i} (q ;q)_{2 n +j+ 1}}{ (q;q)_i(q;q)_{2n}(q;q)_{j+n}(q;q)_{n+j+i+1}}.
\end{equation}
Finally Lemma \ref{thetag} implies
\begin{equation}
\label{mn2}
\left\lceil 
\begin{array}{cc}
i & j \\ 
n & n%
\end{array}%
\right\rceil _{0}\Delta_{i+j}=\Lambda(n,i,j),
\end{equation}
and one could use (\ref{fact}) and the formula for the theta graph given in \cite{kaufflinks} or \cite{Masbaum1} to write

\begin{equation}
\label{mn3}
\left\lceil 
\begin{array}{cc}
i & j \\ 
n & n%
\end{array}%
\right\rceil _{0}\Delta_{i+j}=(-1)^{i+j+n} q^{-(i+j+n)/2}  
 \frac{(q;q)_{n}(q;q)_{j}(q;q)_{i} (q ;q)_{n +j+i+1}}{ (1-q)(q;q)_{i+n}(q;q)_{j+n}(q;q)_{j+i}}.
\end{equation}

Putting (\ref{mn1}), (\ref{mn2}), and (\ref{mn3}) in (\ref{mainfor}) we obtain

\begin{equation}
\label{mainforfinal}
  \tilde{J}_{n,\Gamma}(q)
   \doteq_{n} \displaystyle\sum\limits_{i=0}^n\ \sum\limits_{j=0}^nP(n,i,j),
\end{equation}

where
{\footnotesize
\begin{equation*}
P(n,i,j)=
\frac{(-1)^{i+j+ n} q^{\frac{i}{2}+i^2+\frac{j}{2}+j^2-\frac{5 n}{2}}(q;q)_{i+j} (q;q)_n^{15}
(q;q)_{1+i+2n}(q;q)_{1+j+2 n} (q;q)_{1-i+3 n} (q;q)_{1-j+3 n}}{(1-q)(q;q)_i^2
(q;q)_j^2(q;q)_{2 n}^6 (q;q)_{n-i}^3 (q;q)_{i+n} (q;q)_{n-j}^3
(q;q)_{j+n} (q;q)_{1+i+j+n} (q;q)_{1+2 n}^2}\frac{\Delta_{2n}}{\Delta_{n+i}}\frac{\Delta_{2n}}{\Delta_{n+j}}.
\end{equation*}
}

Now
\begin{eqnarray*}
 \frac{(q;q)_n}{(q;q)_{2n}}&=&\frac{\displaystyle\prod_{i=0}^{n-1}(1-q^{i+1})}{\displaystyle\prod_{i=0}^{2n-1}(1-q^{i+1})}\\&=&\frac{1}{\displaystyle\prod_{i=n}^{2n-1}(1-q^{i+1})}\\&=&\displaystyle\prod_{i=0}^{n-1}\frac{1}{(1-q^{i+n+1})}\doteq_n1.
  \end{eqnarray*}
  Similarly,
 \begin{eqnarray*}
 \frac{(q;q)_n}{(q;q)_{2n+1}}\doteq_n1.
   \end{eqnarray*} 
Moreover,
\begin{equation*}
\label{1}
\frac{(q;q)_{3n-i+1}}{(q;q)_{2n+1}}
=1-q^{2n+2}+O(2n+3)=_{n}1.
\end{equation*}
and
\begin{eqnarray*}
 \frac{(q;q)_{2n+i+1}}{(q;q)_{n+i}}&=&\frac{\displaystyle\prod_{k=0}^{3n+i}(1-q^{k+1})}{\displaystyle\prod_{i=0}^{n+i-1}(1-q^{k+1})}\\&=&\displaystyle\prod_{i=n+i}^{3n+i}(1-q^{k+1})\doteq_n1 .
\end{eqnarray*}
Hence,
\begin{equation*}
\displaystyle\sum\limits_{i=0}^n\ \sum\limits_{j=0}^nP(n,i,j)\doteq_{n}\displaystyle\sum\limits_{i=0}^n\ \sum\limits_{j=0}^n\frac{ q^{i+i^2+j+j^2}(q;q)_{i+j} (q;q)_n^{8}
}{(1-q)(q;q)_i^2
(q;q)_j^2 (q;q)_{n-i}^3  (q;q)_{n-j}^3
}. 
\end{equation*}
Further simplification yields:
\begin{eqnarray*}
\displaystyle\sum\limits_{i=0}^n\ \sum\limits_{j=0}^n\frac{ q^{i+i^2+j+j^2}(q;q)_{i+j} (q;q)_n^{8}
}{(1-q)(q;q)_i^2
(q;q)_j^2 (q;q)_{n-i}^3(q;q)_{n-j}^3
}&\doteq_n&(q;q)_{n}^2\displaystyle\sum\limits_{i=0}^n\ \sum\limits_{j=0}^n\frac{ q^{i+i^2+j+j^2}(q;q)_{i+j} 
}{(1-q)(q;q)_i^2
(q;q)_j^2}\\&=&\frac{(q;q)_{n}^2}{(1-q)}\sum\limits_{i=0}^{n}\sum\limits_{k=i}^{n}\frac{q^{k+k^{2}-2i(k-i)}}{%
(q;q)_{k}}\left[ 
\begin{array}{c}
k \\ 
i%
\end{array}%
\right] _{q}^{2}.
\end{eqnarray*}
The definition of the quantum binomial coefficients allows us to write
\begin{eqnarray*}
\frac{(q;q)_{n}^2}{(1-q)}\sum\limits_{i=0}^{n}\sum\limits_{k=i}^{n}\frac{q^{k+k^{2}-2i(k-i)}}{%
(q;q)_{k}}\left[ 
\begin{array}{c}
k \\ 
i%
\end{array}%
\right] _{q}^{2}=\frac{(q;q)_{n}^2}{1-q}\sum\limits_{k=0}^{n}\frac{q^{k+k^{2}}}{%
(q;q)_{k}}(\sum\limits_{i=0}^{k}q^{(-2i(k-i))}\left[ 
\begin{array}{c}
k \\ 
i%
\end{array}%
\right] _{q}^{2}).
\end{eqnarray*}
Hence
\begin{eqnarray*}
T_{\Gamma}(q)\doteq_n\frac{ \tilde{J}_{n,\Gamma}(q)}{\Delta_n(q)}\doteq_n(q;q)_{\infty}^2\sum\limits_{k=0}^{\infty}\frac{q^{k+k^{2}}}{%
(q;q)_{k}}(\sum\limits_{i=0}^{k}q^{(-2i(k-i))}\left[ 
\begin{array}{c}
k \\ 
i%
\end{array}%
\right] _{q}^{2}).
\end{eqnarray*}
\end{proof}
Using Mathematica we computed the first 120 terms of $T_{\Gamma}(q)$:
\\

$ T_{8_5}
   \doteq_{120}
1-2 q+q^2-2 q^4+3 q^5-3 q^8+q^9+4 q^{10}-q^{11}-2 q^{12}-2 q^{13}-3 q^{14}+3 q^{15}+7 q^{16}+2 q^{17}-4 q^{18}-4 q^{19}-4 q^{20}-5 q^{21}+3
q^{22}+9 q^{23}+9 q^{24}-4 q^{26}-9 q^{27}-8 q^{28}-
5 q^{29}-q^{30}+9 q^{31}+13 q^{32}+16 q^{33}+5 q^{34}-10 q^{35}-13 q^{36}-15 q^{37}-12 q^{38}-7 q^{39}+15 q^{41}+25 q^{42}+23 q^{43}+15 q^{44}-3
q^{45}-16 q^{46}-28 q^{47}-31 q^{48}-21 q^{49}-
12 q^{50}+4 q^{51}+16 q^{52}+37 q^{53}+41 q^{54}+39 q^{55}+26 q^{56}-6 q^{57}-34 q^{58}-48 q^{59}-51 q^{60}-49 q^{61}-32 q^{62}-8 q^{63}+20 q^{64}+39
q^{65}+67 q^{66}+76 q^{67}+67 q^{68}+43 q^{69}+
9 q^{70}-36 q^{71}-74 q^{72}-99 q^{73}-101 q^{74}-79 q^{75}-52 q^{76}-7 q^{77}+33 q^{78}+77 q^{79}+108 q^{80}+135 q^{81}+127 q^{82}+104 q^{83}+51
q^{84}-10 q^{85}-82 q^{86}-145 q^{87}-
174 q^{88}-182 q^{89}-160 q^{90}-115 q^{91}-37 q^{92}+37 q^{93}+119 q^{94}+177 q^{95}+218 q^{96}+238 q^{97}+229 q^{98}+171 q^{99}+88 q^{100}-17 q^{101}-126
q^{102}-236 q^{103}-313 q^{104}-
344 q^{105}-325 q^{106}-256 q^{107}-157 q^{108}-28 q^{109}+98 q^{110}+241 q^{111}+343 q^{112}+420 q^{113}+440 q^{114}+424 q^{115}+336 q^{116}+212
q^{117}+41 q^{118}-150 q^{119}-324 q^{120}$.

\end{document}